\newcommand{\orb}[1]{\normalsize{\ensuremath{\mathsf{#1}}}\normalsize}
\newcommand{\orbcap}[1]{\ensuremath{\mathsf{#1}}}
\newcommand{\xcap}{\times}
\newcommand{\handle}{\circ}
\renewcommand{\star}{\ast}
\DeclareMathOperator{\cl}{cl}
\DeclareMathOperator{\inter}{int}
\theoremstyle{plain}
\newtheorem*{theorem*}{Theorem}
\newtheorem*{proposition*}{Proposition}
\newtheorem{theorem}{Theorem}[section]
\newtheorem{lemma}[theorem]{Lemma}
\theoremstyle{definition}
\newtheorem{definition}[theorem]{Definition}
\begin{document}
\begin{frontmatter}

\title{Tiling the Euclidean and Hyperbolic planes with ribbons }

 \runtitle{Ribbon tilings}        	

\begin{aug}
 \author{\fnms{Benedict} \snm{Kolbe}\ead[label=e1]{kolbe@math.tu-berlin.de}}
 \and
  \author{\fnms{Vanessa}  \snm{Robins}\corref{}\thanksref{t2}\ead[label=e2]{vanessa.robins@anu.edu.au}},

  \thankstext{t2}{Corresponding author supported by ARC Future Fellowship FT140100604.}

\runauthor{Kolbe and Robins}

  \affiliation{}

  \address{Institute of Mathematics, Technische Universit\"at Berlin \\  \printead{e1}}
  \address{Research School of Physics, The Australian National University, Canberra \\ \printead{e2}}
  
\end{aug}

\begin{abstract}
We describe a method to classify crystallographic tilings of the Euclidean and hyperbolic planes by tiles whose stabiliser group contains translation isometries or whose topology is not that of a closed disk. We tackle this problem from two different viewpoints, one with constructive techniques to enumerate such tilings and the other from a viewpoint of classification. 
The methods are purely topological and generalise Delaney-Dress combinatorial tiling theory.  The classification is up to equivariant equivalence and is achieved by viewing tilings as decorations of orbifolds. 
\end{abstract}

\begin{keyword}
\kwd{Delaney-Dress tiling theory}
\kwd{Ribbon tiles}
\kwd{Frieze groups}
\end{keyword}

\end{frontmatter}

\section{Introduction}
\label{intro}

Patterns built from repeating motifs appear in all cultures and have long been studied in art, mathematics, engineering and science.  
Most mathematical work has focussed on patterns in the Euclidean plane (the book ``Tilings and Patterns''  by Gr\"unbaum and Shephard~\cite{grunbaum_tilings_1987} contains a comprehensive survey of the field up to the mid 1980s) but the importance of hyperbolic geometry as a model for natural forms is increasingly recognised~\cite{language_of_shape,pedersen_polyhedra_2018,KAPFER20116875,Sharon:2002df}.
An example that inspires the work in this paper is the discovery that star co-polymer systems consisting of three mutually immiscible arms can self-assemble into structures modelled by stripes on the gyroid triply periodic minimal surface~\cite{kirkensgaard_hierarchical_2014,deCampo2017}.  
The gyroid surface has genus three in its smallest side-preserving translational unit cell, and therefore has the hyperbolic plane as its simply-connected covering space. 
Its 3d space-group symmetries induce a non-euclidean crystallographic group generated by hyperbolic isometries that are known explicitly~\cite{Sadoc1989,Robins2005}.
Stripe patterns on the gyroid lift via the covering map to tilings of the hyperbolic plane by infinitely long strips, or ribbons. 
The defining property of a ribbon tile is the existence of a translation isometry that maps a given tile back onto itself. 
This paper considers the general question of how to describe and enumerate crystallographic tilings of the Euclidean and hyperbolic planes by ribbon tiles.  

The Euclidean case of striped patterns is described in Section 6.5 of \cite{grunbaum_tilings_1987} citing earlier work by Wollny~\cite{Wollny1981}; there are 26 distinct types of crystallographic ribbon tilings of the Euclidean plane. 
To obtain corresponding results for the hyperbolic plane we require different mathematical techniques to those used by Gr\"unbaum and Shephard.  
Our approach derives from the perspective of Dress \emph{et al.} who developed the field of combinatorial tiling theory~\cite{DRESS1987,Dresshu} and from the classification of 2d discrete groups of isometries via their quotient spaces~\cite{Macbeath1967,Wilkie1966,thurston,Lucic1991,Conway2008}.   
Combinatorial tiling theory treats periodic (crystallographic) tilings of a simply connected space where the tiles are topological disks and defines an invariant called the Delaney-Dress symbol or  \emph{D-symbol} as a coloured weighted graph. 
From the D-symbol it is possible to reconstruct the tile shapes and adjacencies and the isomorphism class of the symmetry group for the tiling. 

In the $2$d setting, a D-symbol encodes a finite triangulation derived from the tiling. 
The underlying space of the triangulation is the quotient of the plane by a discrete group of isometries that preserve the tiling.  
This quotient space is a $2$-orbifold and can be viewed as a compact surface with (possibly) a finite number of boundary components and a finite number of isolated marked points.  
Although the full theory of D-symbols does not directly generalise to our setting of tilings by ribbons, the correspondence between crystallographic patterns and decorations of 2-orbifolds certainly does. 
The main issues to overcome are characterising the possible stabiliser subgroups for unbounded ribbon tiles, and constructing a triangulation from a given pattern. 
For Euclidean tilings by ribbons the possible stabiliser groups are the seven frieze groups; for the hyperbolic case, infinitely many non-euclidean frieze groups are also possible.   

Previous work related to this paper includes Huson's paper on tile-transitive partial tilings of the Euclidean plane~\cite{Husonpartial1993}, and the exploration of crystallographic line and tree patterns in the hyperbolic plane by Hyde et al.~\cite{Hyde2000,evansper1,evansper2,evansper3}. 
In this paper, we assume the tilings are locally finite and acted on by a discrete group of isometries of the plane with compact fundamental domain.
Definitions and notation for these groups of isometries are given in Section~\ref{groups}, and for combinatorial tiling theory in Section~\ref{tilings}. 
Results characterising the existence and structure of ribbon tiles are in Section~\ref{sec:ribbontiles}. 
Algorithms for enumerating and classifying tilings are described in Section~\ref{sec:gendsymbs}, with examples in Section~\ref{sec:ex}.

\section{Definitions and Preliminaries}
\label{sec:1}

\subsection{Groups of isometries}
\label{groups}

Let $\mathcal{X}$ be either the Euclidean ($\mathbb{E}^2$) or hyperbolic ($\mathbb{H}^2$) plane, and let $\Gamma$ be a discrete group of isometries of $\mathcal{X}$ having a compact fundamental domain.  If $\mathcal{X} = \mathbb{E}^2$ then $\Gamma$ is one of the 17 wallpaper groups of crystallography.  If $\mathcal{X} = \mathbb{H}^2$, then $\Gamma$ is a NEC group (non-Euclidean crystallographic group).  
We identify the isomorphism class of a group using Conway's \emph{orbifold symbol}~\cite{Conway2002}, a highly-readable version of Macbeath's group signature \cite{Macbeath1967}, as described below.  

For the purposes of this paper a $2$-orbifold, $\mathcal{O} = \mathcal{X}/\Gamma$, is a quotient space obtained by identifying points of $\mathcal{X}$ under the action of $\Gamma$.  That is, $x \sim y$ if $y = \gamma x$ for some $\gamma \in \Gamma$. 
The difference between $\mathcal{X}/\Gamma$ as a topological space and as an orbifold is that the full orbifold structure retains the metric information carried by the particular isometries of $\Gamma$ and an atlas of charts compatible with the $\Gamma$ action.  
We require both the topological view point and orbifold structure of the quotient space in this paper, and will use the script notation $\mathcal{O}$ for the orbifold with the additional structure and plain $O$ for its underlying topological space.   

It is well-known~\cite{thurston} that $2$-orbifolds have the topology of a finite-area $2$-manifold with a finite number of boundary components.  
Boundaries in a $2$-orbifold arise from the fixed lines of reflection isometries.
Other special points arise as the centres of rotational isometries; these are called \emph{cone points} if they lie in the interior of the orbifold, and \emph{corner points} if they lie on a boundary.  
The branching number, $N$, of a cone or corner point is the order of the rotational isometry, $\sigma$, that fixes that point i.e. $\sigma^N= id$. 
The boundaries, corner and cone points are collectively referred to as the \emph{singular locus} of the orbifold.  
The topology of a $2$-orbifold ($O$) is therefore specified by a symbol as follows: 
\begin{enumerate}
\item The number of handles, $h$, if the orbifold is orientable, or the number of cross-caps, $k$, if non-orientable.  Handles are denoted by \orb{\handle} at the beginning of the orbifold symbol.  Cross-caps are denoted by  \orb{\xcap} at the end of the orbifold symbol. 
\item The branching number for each cone point, listed in arbitrary order after any handles.  
\item The number of boundary components, $q$.  Each boundary component is represented by a \orb{\star} in the symbol.  Branching numbers for the corner points lying on each boundary component are listed in cyclic order, such that each boundary component has a consistent orientation for the manifold.  The ordering of the boundary components is arbitrary.  
\end{enumerate}
As simple examples, the group of isometries for a tiling of $\mathbb{E}^2$ by squares meeting four to a corner is  \orb{\star244}  ($p4m$ in Hermann-Mauguin notation),  
but for a Euclidean pattern with only translational symmetries it is \orb{\handle} ($p1$).   
 
It is known~\cite{thurston,Conway2002} that any orbifold symbol will correspond to a group of isometries of $\mathbb{E}^2, \, \mathbb{H}^2$, or $\mathbb{S}^2$, except for the symbols \orb{A}, \orb{\star A}, \orb{AB}, and \orb{\star AB}, with $\orb{A} \neq \orb{B}$. 
Moreover, the plane geometry associated with an orbifold can be deduced by computing a curvature-related quantity (the orbifold Euler characteristic) directly from the group symbol. 

Other discrete groups of isometries will be important when we discuss the internal symmetries of a tile as defined in the following section.  
For a bounded tile $T$, homeomorphic to a disk, the possible symmetry groups include that generated by a single reflection ($D_1$) and those that fix a single point.  The latter are well known as the cyclic and dihedral groups of order $N \geq 2$: 
\begin{align*}
	C_N & = \left<  q \;|\; q^N = id \right>  \\
	D_N & = \left < r_1, r_2 \;|\;  r_1^2 = id, \; r_2^2 = id, \; (r_1r_2)^N = id \right> 
\end{align*}
When these groups are viewed as acting on the tile $T$, we can form the quotient space $T/C_N$ or $T/ D_N$ and describe the topology of these quotient spaces similarly to the orbifold symbol above.  
This yields what Conway \emph{et al.}~\cite{Conway2008} call \emph{signatures} for rosette patterns:  \orb{N\bullet} and \orb{\star N \bullet} for the cyclic and dihedral groups respectively.  The symbol \orb{\bullet} represents for us a section of tile boundary in the quotient space.  
Note that the group generated by a single reflection isometry, $D_1 = \left< r \;|\; r^2 = id \right>$ is abstractly isomorphic to $C_2$, but its quotient space has the signature \orb{\star \bullet}. 

As we want to classify tilings by ribbons, we also need to consider, for example, isometries of a tile $T$ that is homeomorphic to $[0,1]\times\mathbb{R}$. 
The possible discrete symmetry groups for such a tile are $D_1$, $D_2$, $C_2$ or one of the seven frieze groups. 
Again, we can specify the quotient space structure of $T/G$ using a descriptive signature or symbol. 
In Table~\ref{tab:frieze_groups} we give the signatures (from \cite{Conway2008}) and Hermann-Mauguin (IUCr) name of each of the frieze groups.   In these signatures the \orb{\star} symbol still represents a single boundary component of $T/G$ and in our setting the  \orb{\infty} symbol represents a segment of tile boundary in $T/G$.  So the signature \orb{\star \infty \infty} implies that $T/G$ is a disk with a mirror boundary interrupted by two tile boundary segments and so combinatorially a quadrilateral.  Note that in other contexts, the \orb{\infty} symbol can represent an orbifold puncture as might be generated by a parabolic isometry of $\mathbb{H}^2$.  The different contexts are just other ways of obtaining a geometric realisation of the same abstract group.  

\begin{table}[h]  
\caption{\label{tab:frieze_groups} Signature of $T/G$ and IUCr name for the seven frieze groups, the index of the translation isometry in $G$, and a presentation that makes the translation isometry $t$ explicit in each case.} 
\begin{tabular}{cccl}
$T/G$  &   name  &  index & group presentation  \\
\hline  
 \orb{\infty \infty}  &  \emph{p1}  & 1 &  $\left< t \right>$ \\ 
 \orb{\infty x}  &  \emph{p11g}  &  2 & $\left< g, t \;|\;  g^2 = t \right>$  \\
 \orb{\infty \star}  &  \emph{p11m}   &  2 &  $\left< r, t \;|\;  r^2 , \,  rtr = t \right>$  \\
 \orb{\star \infty \infty}  & \emph{p1m1}  & 2 &  $\left< r_1, r_2, t  \;|\;  r_1^2 ,\,  r_2^2, \, r_1r_2 = t \right>$  \\
 \orb{22\infty}  &  \emph{p2}  &  2 & $\left< q_1, q_2, t  \;|\;  q_1^2 ,\,  q_2^2 , \, q_1q_2 = t \right>$  \\
 \orb{2\star\infty} & \emph{p2mg}  & 4 & $\left< r, q, t  \;|\;  r^2 ,\,  q^2, \, (qr)^2 = t \right>$  \\
 \orb{\star 22\infty}  &  \emph{p2mm}  & 4 & $\left< r_1, r_2, r_3, t  \;|\;  r_1^2 ,\,  r_2^2,\,  r_3^2 ,\,  (r_1r_2)^2, \, (r_2r_3)^2 , \, r_1r_3 = t \right>$  
 \end{tabular} 
 \end{table}

It is important to note that the above rosette and frieze groups can be realised using isometries of either the Euclidean or hyperbolic plane.  
If a rosette or frieze group, $G$, occurs as the subgroup of a wallpaper or NEC group, $\Gamma$, then it has infinite index, i.e. there are infinitely many cosets $\gamma G$.
The general question of characterising the subgroups of infinite index in NEC groups is covered in~\cite{Hoare1972,Hoare1973} where it is shown, as a special case, that if the subgroup has no reflection elements then it must be a free product of cyclic groups (of finite or infinite order). 
So, for example, \orb{22\infty} is simply the free product of two copies of $C_2$. 
The other frieze groups are given in Table~\ref{tab:frieze_presentations}.

\begin{table}[h]  
\caption{\label{tab:frieze_presentations} Frieze groups and their presentation as per results in~\cite{Hoare1973}.  The symbols $t$ and $g$ are generators of infinite order, with $t$ having a geometric interpretation as a translation and $g$ as a glide. The other generators use $r$ and $q$ which can be interpreted geometrically as reflections and rotations respectively.} 
\begin{tabular}{cl}
G  &   group presentation  \\
\hline  
 \orb{\infty \infty}  &     $C_{\infty} = \left< t \right>$ \\ 
 \orb{\infty x}  &    $C_{\infty} = \left< g  \right>$  \\
 \orb{\infty \star}  &    $C_2 \star C_{\infty} = \left< r, g \;|\;  r^2  \right>$  \\
 \orb{\star \infty \infty}  &  $C_2 \star C_2 =  \left< r_1, r_2  \;|\;  r_1^2 ,\,  r_2^2  \right>$  \\
 \orb{22\infty}  &     $C_2 \star C_2  = \left< q_1, q_2   \;|\;  q_1^2 ,\,  q_2^2 \right>$  \\
 \orb{2\star\infty} &   $C_2 \star C_2 = \left< r, q \;|\;  r^2, q^2 \right>$  \\
 \orb{\star 22\infty} & $C_2 \star C_2 \star C_2 = \left< r, q_1, q_2 \;|\; r^2, q_1^2, q_2^2 \right> $
 \end{tabular} 
 \end{table}
 
In the hyperbolic plane, we will also naturally encounter unbounded simply connected tiles with branching structure homeomorphic to a neighbourhood of an infinite tree embedded in $\mathbb{H}^2$.  
We call such tiles \emph{branched ribbons}, examples are given in Section~\ref{sec:ex}. 
The isometries of such a tile are isomorphic to a group action on a tree, and are covered by the theory of Bass-Serre~\cite{stilwell2002trees}.  
The simplest examples of group actions on trees are those that have a line segment as fundamental domain.
These groups are a free product with amalgamation of the subgroups that fix the vertices, amalgamated via the subgroup that fixes the edge (the oriented line segment).  
For example, if the line segment generating the tree has end points on rotation centers of order \orb{A} and \orb{B}, and the edge group is trivial, then the group is $G = C_A \star C_B$ and $T/G$ has the signature \orb{AB\infty}.

\subsection{Combinatorial tiling theory}
\label{tilings}

\emph{Combinatorial tiling theory} describes the combinatorial structure of tilings of a simply connected space for which each tile is homeomorphic to a closed and bounded disk. 
The definitions below follow those given for two-dimensional spaces in \cite{Huson1993}. 
A set $\mathcal{T}$ of topological disks in $\mathcal{X}$ is called a \emph{tiling} if every point  $x\in\mathcal{X}$ belongs to at least one tile $T\in \mathcal{T}$ and every two tiles $T_1$ and $T_2$ of $\mathcal{T}$ have disjoint interior.  
All tilings in this paper will be assumed to be locally finite, i.e. any compact disk in $\mathcal{X}$ meets only a finite number of tiles. 
The vertices and edges of a tile are defined topologically rather than using the geometry of straight lines and corners.  
So, a \emph{vertex} is a point that is contained in at least three tiles, and 
an \emph{edge} is a connected segment of tile boundary joining two vertices. 

Let $\mathcal{T}$ be a tiling of $\mathcal{X}$ and let $\Gamma$ be a discrete group of isometries.  
If  $\mathcal{T}=\gamma \mathcal{T} :=\{\gamma T \;|\; T\in\mathcal{T}\}$ for all $\gamma \in \Gamma$ then we call the pair $(\mathcal{T},\Gamma)$ an \emph{equivariant tiling}.  
Two tiles $T_1,T_2\in \mathcal{T}$ are \emph{equivalent} or symmetry-related if there exists $\gamma\in\Gamma$ such that $\gamma T_1=T_2.$  
The \emph{orbit} of a tile is the subset of $\mathcal{T}$ given by images of $T$:  $\Gamma.T = \{ \gamma T  $ for $ \gamma \in \Gamma \}$. 
Given a particular tile $T\in\mathcal{T}$, the \emph{stabilizer subgroup} $\Gamma_T$ is the subgroup of $\Gamma$ that fixes $T$, i.e. $\Gamma_T = \{ \gamma \in \Gamma \;|\; \gamma T = T \}$.  
A tile is called \emph{fundamental} if $\Gamma_T$ is trivial and we call the whole tiling fundamental if this is true for all tiles. 
An equivariant tiling is called \emph{tile}-$k$-\emph{transitive}, when $k$ is the number of equivalence classes (i.e. distinct orbits) of tile under the action of $\Gamma$.  
We can also study the action of $\Gamma$ on tile edges and vertices and define edge- and vertex-$k$-transitivity similarly.  
Note that the above definitions do not require $\Gamma$ to be the maximal symmetry group for the tiling $\mathcal{T}$.

Two tilings $(\mathcal{T}_1, \Gamma_1)$ and $(\mathcal{T}_2, \Gamma_2)$ of a simply connected space $\mathcal{X}$ are \emph{equivariantly equivalent} if there is a homeomorphism, $\phi$, of $\mathcal{X}$ such that $\phi(T_1) \in \mathcal{T}_2$ for all $T_1 \in \mathcal{T}_1$ and such that $\phi$ induces a group isomorphism of $\Gamma_1$ onto $\Gamma_2$ by $\Gamma_2=\phi \Gamma_1 \phi^{-1}.$  
A natural question is  whether there is an invariant that detects when two tilings are equivariantly equivalent.  
Dress \emph{et al.},~\cite{DRESS1987} show that a complete invariant is indeed possible for tilings of simply connected manifolds.  
The invariant, called the D-symbol, consists of a graph that records adjacencies between tiles and their faces, augmented by weights that encode the group action of $\Gamma$ on $\mathcal{T}$.   
The D-symbol can be interpreted as encoding a simplicial structure on an orbifold, obtained from barycentric subdivision of the tiling.  
For the 2d case this means a triangulation of a 2-orbifold where each 2-simplex spans a tile centre-point, edge mid-point and tiling vertex.  
This structure is exploited in~\cite{Huson1993} to achieve a fully algorithmic approach to the enumeration and identification up to equivariant equivalence of 2d tilings of $\mathbb{S}^2, \mathbb{E}^2$ and $\mathbb{H}^2$.  

A fundamental tile-1-transitive equivariant tiling $(\mathcal{T}, \Gamma)$ has a single type of tile, $T_0$, that is a fundamental domain for $\Gamma$. 
Conversely, any fundamental domain for $\Gamma$ homeomorphic to a disk also gives rise to such a tiling. 
As established by Wilkie in~\cite{Wilkie1966} such a tiling corresponds to a presentation for $\Gamma$, with each edge of $T_0$ corresponding to a generator and each vertex to a relation.  
If $e_i = T_0 \cap T_i$, then the edge-generator is an element, $[e_i]  \in \Gamma$, such that $T_i = [e_i] T_0$. 
The relation at vertex $v$ is found by listing the generators associated with each edge crossing when making a clockwise circuit around $v$.   See~\cite{Wilkie1966} for further details. 

The papers by Lucic \emph{et al.}~\cite{Lucic1990,Lucic1991,Lucic2018} present a method for enumerating the different combinatorial forms of disk-like fundamental domains for crystallographic groups. 
In particular, the authors establish that vertices and edges of a tiling $(\mathcal{T}, \Gamma)$ map via the group action onto vertices and edges of a graph $C = (V,E)$ embedded in the quotient space $O = \mathcal{X}/\Gamma$. 
In general, vertices of $C$ come from vertices of $T$, except in the case that an edge midpoint in $T$ is a cone point of order 2.  Such an edge of $T$ maps onto an edge of $C$ with a vertex of degree 1 on the cone point. 
The graph $C$ embedded in $O$ has the following properties: 
\begin{enumerate}
\item  $O\setminus C$ is an open disk. 
\item  Each cone point is a vertex of $C$ with at least one incident edge in $C$.   
\item  Each (mirror) boundary component of $O$ lies in a subgraph of $C$.  
\item  Each corner point is a vertex of $C$ with at least two incident edges. 
\item  Let $\tilde{O}$ be the closed surface of genus $g$ obtained from $O$ by capping each boundary component of $O$ with a disk.  
Then $C$ is contractible in $\tilde{O}$ to the graph $\tilde{C}$ with one vertex and $2g$ loops if $O$ is orientable and $g$ loops if $O$ is non-orientable.  
In particular, each subgraph $C_i$ that belongs to the $i$-th boundary component of $O$ is a contractible loop in $\tilde{O}$.
\item Any vertex of $C$ that does not lie on a cone point, corner point or boundary must have at least three incident edges in $C$.    
\end{enumerate}
See \cite{Lucic1991,Lucic2018} for representative general figures, and figure \ref{fig:EuclideanEx} for a simple Euclidean example. 

Another approach to enumerating fundamental tile-1-transitive tilings using a combinatorial requirement on D-symbols is given by Huson in~\cite{Huson1993}.  
The same paper then shows that tile-$k$-transitive fundamental tilings can be derived from those of transitivity $(k-1)$ using the operation of \emph{tile splitting} and non-fundamental tile-$k$-transitive tilings are obtained from fundamental ones by \emph{tile glueing}.   We describe these operations below in terms of modifications to $C$, the graph of tile edges in $O$. 

Given a tile-$k$-transitive fundamental tiling, let $C$ be its corresponding graph on $O$.  
Tile-splitting adds a new segment $e$ to $C$ such that $O\setminus (C\cup{e})$ is the union of $k+1$ disks.

Now, suppose we have a tile-$k$-transitive tiling with $j \leq k$ classes of fundamental tile.  
A fundamental tile is identified as a component of $O\setminus C$ that contains no part of the singular locus of $\mathcal{O}$.  
Tile glueing erases at most two edges from $C$ to get $C'$ so that $C'$ is connected, $O\setminus C'$ is still the union of $k$ disks, and the tiling has $(j-1)$ classes of fundamental tile. 
The edges of $C$ that can be erased must be incident only to one transitivity class of fundamental tile and are of three types:
\begin{enumerate}
\item An edge of $C$ that has a vertex of degree 1 at a cone point of order \orb{N}. 
This glues $N$ copies of a fundamental tile into one new one with stabiliser group \orb{N\bullet}. 
\item A pair of edges of $C$ that lie in a mirror boundary and meet at a vertex of degree 2 on a corner point of order \orb{N}. This glues $2N$ copies of a fundamental tile into one new one with stabiliser group \orb{\star N\bullet}.  
\item A single edge of $C$ that is a segment of mirror boundary and has vertices of degree at least 3 (or degree 2 on a corner point).  This glues two copies of a tile together into one with stabiliser group \orb{\star \bullet}. 
\end{enumerate}
As discussed in~\cite{Huson1993}, a sequence of tile splits and glues can sometimes lead to a tile whose interior is a disk, but whose closure is not.  Such tiles can be identified algorithmically from the combinatorics of the D-symbol as described in that paper.

In the following sections, we develop the mathematical groundwork required to characterise edge erase operations that lead to unbounded ribbon and branched-ribbon tilings.  

\section{Orbifold paths and tile glueing} 
\label{sec:ribbontiles}

We will study the relationship between closed paths in the orbifold $\mathcal{O} = \mathcal{X}/\Gamma$, their lifts in $\mathcal{X}$ (i.e. the Euclidean or hyperbolic plane), and equivariant tilings that contain unbounded tiles.  
Firstly: 
\begin{definition}
A (possibly branched) \emph{ribbon tiling} of $\mathcal{X}$ is a countable set $\mathcal{T}$ of connected closed domains, $T_i$,  such that every point $x\in \mathcal{X}$ belongs to at least one tile, all tiles have pairwise disjoint interiors, and such that any bounded disk in $\mathcal{X}$ intersects finitely many tiles.  
\end{definition}
An equivariant ribbon tiling is one that is mapped to itself by a discrete group of isometries of $\mathcal{X}$. A simple example of a ribbon tiling that is not an example of a classical tiling is the trivial tiling, where there is only one tile that is all of $\mathcal{X}$. 
Other examples are the partial tilings enumerated by Huson in~\cite{Husonpartial1993}, if we treat his complementary regions as ribbon tiles. 

Below, we denote the singular locus of $\mathcal{O}$ (and $O$) by $\Sigma$, and the projection map as $p : \mathcal{X} \to O$.   
\begin{definition}
An \emph{orbifold loop} in $\mathcal{O}$ is built from a sequence of paths $\{\alpha_i\}_{i=1}^k : [t_{i-1}, t_{i}] \to \mathcal{X}$ 
where $0 = t_0 < t_1 < \cdots < t_k = 1$ such that the $\alpha_i$ project to a loop in $O$, i.e. 
$p(\alpha_i(t_i))= p(\alpha_{i+1}(t_i)) \: \forall i\in \{1,...,k-1\}$ and $p(\alpha_k(t_k)) = p(\alpha_{1}(0))$. 
Furthermore, we require each segment $\alpha_i([t_{i-1},t_{i}])$ to be contained in a single orbifold coordinate patch, and attach to each $t_i$, $i=1,\ldots,k$, an element $\gamma_i \in \Gamma$ that corresponds to a coordinate change map from the patch containing $\alpha_{i}$ to that containing $\alpha_{i+1}$, with $\gamma_k$ being a coordinate change from $\alpha_{k}$ to $\alpha_0$. 
\end{definition}
The group elements $\gamma_i$ that lift the coordinate changes at the points where the $\alpha_i$ fit together let us  distinguish the situation where an orbifold path crosses a mirror line in $\mathcal{X}$ ($\gamma$ is the reflection) or simply backtracks after touching it ($\gamma$ is the identity). 

We also define a \emph{simple} orbifold loop as one that has no self-intersections in $\mathcal{O}$ and does not pass through any cone or corner points.  

Two orbifold loops are homotopic if the sequence of paths in $\mathcal{X}$ are $\Gamma$-equivariantly homotopic. Note that we explicitly allow for concatenation and splitting of paths, where applicable, i.e. the number of segments, $k$, in the definition above is not necessarily fixed during a homotopic deformation.  Also refer to \cite{ratcliffe2006foundations} for further details.

The \emph{orbifold fundamental group} $\pi_1^{orb} (\mathcal{O}, x)$, $x \in \mathcal{O}\setminus \Sigma$, can be defined as the set of orbifold loops based at $x_0 \in \mathcal{X}$ up to homotopy equivalence in $\mathcal{X}$, where $x_0 \in p^{-1}(x)$. 
In practice, one can interpret this to mean that a loop in $\mathcal{O}$ that winds once around a cone point of order $N$ cannot be contracted to the point $x$, but a closed path that winds exactly $N$ times around a neighbourhood of  this cone point (and avoids the rest of $\Sigma$) is null-homotopic. 
Note that the contractibility of an orbifold loop cannot be seen simply by looking at the corresponding image curve in $O$, in contrast to the classical situation for manifolds.

It is known that for our geometric 2-orbifolds, $\pi_1^{orb}(\mathcal{X}/\Gamma) \simeq \Gamma$ in the natural way.  In other words, $\pi_1^{orb}(\mathcal{X}/\Gamma)$ is the group of deck transformations for the \emph{branched} covering map $p: \mathcal{X} \to \mathcal{O}$.  For proofs and a more detailed account, refer to \cite[chapter $13$]{ratcliffe2006foundations}. 

\bigskip

We now study what happens when we erase a subset of edges from a fundamental-domain tiling. 
Recall that the edges of a tiling map to an embedded graph $C= (V,E)$ in $O$.  
Let $S = \{e_1, \ldots, e_k\} \subset E$ be the edges to be erased and $R = E \setminus S$ be the edges that remain. 
Let $C_R = (V_R, R) \subset C$ be the subgraph obtained from $C$ by erasing $S$ and any vertices left isolated. We also want to avoid ``dangling ends'', so for all $e \in R$ with a vertex $v$ of degree-1 in $C_R$ with $v \notin \Sigma$ we add $e$ to $S$.  

\begin{theorem}\label{thm:edgedel}
Let $\mathcal{X} = \mathbb{E}^2$ or $\mathbb{H}^2$ and suppose $\Gamma$ is a wallpaper or NEC group 
Suppose we are given a fundamental tile-1-transitive tiling $(\mathcal{F}, \Gamma)$ whose edges map onto a graph $C$ embedded in $O = \mathcal{X}/\Gamma$. 
Let $S = \{e_1,\ldots,e_k\}$ be a subset of edges of $C$ whose removal avoids dangling ends.  
Then erasing all preimages of these edges from $\mathcal{F}$ results in a non-fundamental tile-1-transitive ribbon tiling $(\mathcal{T}, \Gamma)$, such that the stabiliser group of each tile $T \in \mathcal{T}$ is isomorphic to the subgroup of $\Gamma$ generated by the erased edges. 
\end{theorem}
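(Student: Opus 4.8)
The plan is to translate the edge-erasing operation into the language of cosets in $\Gamma$, using Wilkie's correspondence between the edges of a fundamental domain and generators of the group. Because $(\mathcal{F},\Gamma)$ is fundamental and tile-$1$-transitive, every tile has the form $\gamma T_0$ for a single fundamental domain $T_0$, and $\gamma \mapsto \gamma T_0$ is a bijection $\Gamma \to \mathcal{F}$ (injectivity is exactly triviality of the stabiliser, surjectivity is tile-transitivity). By Wilkie's theorem each edge $e_i$ of $C$ lifts to an edge-generator $[e_i]\in\Gamma$ such that the tile meeting $\gamma T_0$ across the preimage of $e_i$ is $\gamma[e_i]T_0$. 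I would set $H = \langle [e_1],\ldots,[e_k]\rangle$, the subgroup generated by the erased edges, and show that the merged tiles are indexed exactly by the left cosets of $H$.

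First I would record the adjacency structure combinatorially: the tiles of $\mathcal{F}$, with an edge joining $\gamma T_0$ to $\gamma[e]T_0$ for each $\gamma\in\Gamma$ and each $e\in E$, form a copy of the Cayley graph of $\Gamma$ with respect to $\{[e]:e\in E\}$. Erasing all preimages of the edges in $S$ retains only the adjacencies labelled by generators in $S$, so $\gamma_1 T_0$ and $\gamma_2 T_0$ lie in a common connected region afterwards if and only if $\gamma_2 = \gamma_1 h$ for some $h\in H$, i.e. iff $\gamma_1 H = \gamma_2 H$. The merged tiles are therefore the sets $T_{\gamma H} := \bigcup_{h\in H}\gamma h\, T_0$, one per left coset $\gamma H$, each connected because its constituent old tiles are joined through erased edges. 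The group $\Gamma$ permutes them by $\delta\cdot T_{\gamma H} = T_{\delta\gamma H}$, and this action on cosets is transitive, giving tile-$1$-transitivity of $(\mathcal{T},\Gamma)$.

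Next I would check the ribbon-tiling axioms for $\mathcal{T}=\{T_{\gamma H}\}$. Covering of $\mathcal{X}$ is inherited from $\mathcal{F}$. Distinct cosets involve disjoint families of old tiles, and the only points that acquire a full neighbourhood inside a merged tile are those lying on erased edges (or their endpoints), which separate old tiles of the \emph{same} coset; hence the $T_{\gamma H}$ have pairwise disjoint interiors. Local finiteness is inherited because a compact disk meets finitely many old tiles, each contained in a single merged tile. Equivariance holds since the erased set is $\Gamma$-invariant by construction, so $\Gamma$ sends erased edges to erased edges and merged tiles to merged tiles. Together these show $(\mathcal{T},\Gamma)$ is an equivariant ribbon tiling.

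Finally, for the stabiliser I would compute $\Gamma_{T_{\gamma H}} = \{\delta\in\Gamma : \delta\gamma H = \gamma H\} = \gamma H\gamma^{-1}$, the injectivity of $\gamma\mapsto\gamma T_0$ forcing equality of the two unions to become equality of the indexing cosets. Hence every stabiliser is conjugate to, and in particular isomorphic to, $H=\langle [e_1],\ldots,[e_k]\rangle$, as claimed; and since each edge-generator $[e_i]$ maps $T_0$ to a distinct neighbouring tile and is therefore a nontrivial element of $\Gamma$, $H$ is nontrivial as soon as $S\neq\emptyset$, so $(\mathcal{T},\Gamma)$ is non-fundamental. I expect the main obstacle to be the bookkeeping around the special edges flagged in the description of $C$ — an edge with a degree-$1$ vertex on an order-$2$ cone point, and an edge lying in a mirror boundary — where $[e_i]$ is an involution (a half-turn or a reflection) rather than a free generator. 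In these cases I would need to confirm that the adjacency $\gamma T_0 \leftrightarrow \gamma[e_i]T_0$ and the coset identification still hold verbatim, so that the glued pieces (with stabilisers \orb{N\bullet}, \orb{\star N\bullet} or \orb{\star \bullet}) agree with the subgroup-generation statement. A secondary check is that each $T_{\gamma H}$ is a genuine closed domain equal to the closure of its interior; this is precisely where the no-dangling-ends hypothesis is used, ruling out remaining edges that would protrude as lower-dimensional slits into a merged region.
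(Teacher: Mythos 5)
Your proposal is correct and follows essentially the same route as the paper: both rest on Wilkie's edge--generator correspondence, define $H$ as the subgroup generated by the erased edge generators, realise the glued tiles as unions of copies of the fundamental domain indexed by (cosets of) $H$, and read off tile-transitivity and the stabiliser from the $\Gamma$-action on these unions. Your Cayley-graph/coset bookkeeping is just a combinatorial rephrasing of the paper's connectivity argument, which concatenates lifts of orbifold loops according to a word in the generators $[f_1],\ldots,[f_k]$; your explicit conjugate stabiliser $\gamma H \gamma^{-1}$ and your observation of exactly where the no-dangling-ends hypothesis is needed are slightly more detailed than the paper's treatment, but not a different method.
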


\begin{proof}
The correspondence between edges of a fundamental domain, $F \subset \mathcal{X}$, and generators for a presentation of $\Gamma$ are established in~\cite{Wilkie1966}, as summarised in Section~\ref{tilings}.  If $f$ is an edge of $F$, then $f = F \cap \gamma(F)$ for an element $\gamma \in \Gamma$, and we use the notation $[f]$ for this group element.  
We also know that each edge $f$ that is not a segment of mirror boundary is glued to another edge $f' \in F$ (possibly itself) so that $[f'] = [f]^{-1}$.  
The image of $f$ in $O$ is an edge of $C$, $p(f) = p(f') = e$. 

Choose a point $x \in \inter (F)$, and for each $e_i \in C$ choose a single tile edge $f_i \in p^{-1}(e_i) \cap \cl (F)$. 
Then for each $f_i$, there is a simple orbifold loop $\alpha_{e_i}$ with $\alpha_{e_i}(0) = x$ and $\alpha_{e_i}(1) = [f_i](x)$, such that $\alpha_{e_i}([0,1])$ is a connected curve in $\mathcal{X}$ that intersects the boundary of $F$ in a single point of $f_i$. 
This follows from the fact that $F$ is a topological disk.  
In the deck-transformation correspondence between the orbifold fundamental group and $\Gamma$, we then have that $[\alpha_{e_i}] \sim [f_i]$.  

Now let $H$ be the subgroup of $\Gamma$ generated by the group elements associated with edges $e_i \in S$ and let $T = \bigcup_{\eta \in H} \eta (F)$.  
$T$ is path connected by the following argument.  
Each orbifold loop $\alpha_{e_i}$ has a connected representative from $x \in F$ to $[f_i](x)$ in $\mathcal{X}$, and 
any other such connected representative of $\alpha_{e_i}$ has the end-points $\gamma(x)$ and $\gamma[f_i](x)$ for some $\gamma \in \Gamma$.  
Therefore, for each $\eta \in H$, there is a path in $\mathcal{X}$ from $x$ to $\eta(x)$ that lies entirely within $T$, obtained by writing $\eta$ as a word in $[f_1], \ldots, [f_k]$, and forming the corresponding concatenation of the lifts of the $\alpha_{e_i}$ loops.  

By their definitions, $H$ is the stabiliser subgroup of the ribbon tile $T$.  
If $H$ is finite, then it must be a cyclic or dihedral group, the erased edges must be one of the three cases discussed in Section~\ref{tilings}, and $T$ will be bounded.  
If $H$ is infinite and a proper subgroup, then it is a free product of groups as described in~\cite{Hoare1972,Hoare1973} and $T$ must be unbounded as it is the union of infinitely many distinct copies of $F$.  
The case that $H = \Gamma$ means $T$ is the whole of $\mathcal{X}$.  

Next consider the action of an isometry, $\gamma \in \Gamma$ on a glued tile $T$.  
Recall that the construction of $T$ began with a particular choice of fundamental domain tile $F \subset \mathcal{X}$,  
and that $\Gamma$ acts transitively on the tiling $\mathcal{F}$. 
If $\gamma \in H$, then $\gamma F \in T$, $\gamma \eta \in H$ for all $\eta \in H$ and so $\gamma T = T$. 
If $\gamma \notin H$, then $\gamma F \notin T$, and in particular, $\gamma \eta F \notin T$ for any $\eta \in H$, so that $\gamma (\inter(T)) \cap \inter(T) = \emptyset$.  
It follows that for any two $\gamma, \gamma' \notin H$, that either $\gamma T = \gamma' T$ or $\gamma (\inter(T)) \cap \gamma' \inter(T) = \emptyset$.   
So let $\mathcal{T}$ be the union of all distinct images of $T$.  
It then follows from the tile-transitivity of $(\mathcal{F},\Gamma)$, that $(\mathcal{T}, \Gamma)$ is also a tile-transitive ribbon tiling of $\mathcal{X}$.  
$\qed$
\end{proof}

\begin{lemma}\label{lemma:infstab}
The stabiliser subgroup, $H$, for the non-fundamental tile $T$, generated by erasing edges as in Theorem~\ref{thm:edgedel} is infinite if and only if it contains a translation isometry. 
\end{lemma}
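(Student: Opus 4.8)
The plan is to prove the two implications separately, with the reverse direction essentially immediate and the forward direction resting on a geometric classification of the infinite-order isometries available in $\Gamma$. For the reverse implication, suppose $H$ contains a translation isometry $\tau$ (a translation along a line in $\mathbb{E}^2$, or along a geodesic axis in $\mathbb{H}^2$). Any such $\tau$ has infinite order, so the cyclic subgroup $\langle \tau \rangle \leq H$ is already infinite and hence $H$ is infinite.

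For the forward implication, assume $H$ is infinite. I would use the trichotomy established in the proof of Theorem~\ref{thm:edgedel}: either $H = \Gamma$, or $H$ is a proper infinite subgroup and therefore, by~\cite{Hoare1972,Hoare1973}, a free product of cyclic groups. The first step is to locate an element of infinite order in $H$ in either case. When $H$ is a free product of cyclic groups, this is a routine observation: if some free factor is infinite cyclic then its generator has infinite order; otherwise every factor is finite, and since $H$ is infinite there must be at least two nontrivial factors $C_m = \langle a \rangle$ and $C_n = \langle b \rangle$, whence the alternating word $ab\,ab\cdots$ never reduces and $ab$ has infinite order. When $H = \Gamma$, a wallpaper group contains translations by definition, while a cocompact NEC group is non-elementary and so necessarily contains infinite-order (hyperbolic) elements.

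The heart of the argument, and the step I expect to be the main obstacle, is the geometric claim that \emph{every} infinite-order element $\eta \in \Gamma$ has a power that is a translation isometry. Here I would invoke the standing hypothesis that $\Gamma$ acts with compact fundamental domain: cocompactness forces the quotient to have no cusps and so rules out parabolic isometries, leaving translations and glide reflections as the only infinite-order isometries of $\mathcal{X}$ occurring in $\Gamma$. Since the square of a glide reflection is a translation along its axis, either $\eta$ or $\eta^2$ is a translation, and in either case this translation lies in $H$. Applied to the infinite-order element produced in the previous step, this exhibits a translation isometry inside $H$ and completes the forward direction.

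I would close by noting that this dichotomy matches the explicit presentations of Table~\ref{tab:frieze_presentations}: in each infinite frieze case the translation $t$ occurs either as a generator or as a short product of the listed generators (for instance $g^2$, $r_1 r_2$, $q_1 q_2$, or $(qr)^2$), and similarly $ab$ realises the translation in the $C_A \star C_B$ branched-ribbon case, so the infinite-order element constructed above is always one whose relevant power is geometrically a translation.
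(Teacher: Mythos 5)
Your proof is correct, but it takes a genuinely different route from the paper's. The paper argues directly about $H$ as an infinite discrete group of isometries, making no use of how $H$ was generated: in the hyperbolic case it splits into $H$ abelian (generated by a single infinite-order isometry --- translation, glide or parabolic --- with the parabolic case excluded by cocompactness of $\Gamma$) and $H$ non-abelian (where it cites \cite{Friedrich2008} for the existence of a translation); in the Euclidean case it notes that a discrete group consisting only of rotations and reflections is finite, so an infinite $H$ must contain a translation or glide. You instead exploit the algebraic structure recorded in Theorem~\ref{thm:edgedel} --- $H$ is either all of $\Gamma$ or a free product of cyclic groups per \cite{Hoare1972,Hoare1973} --- to manufacture an element of infinite order via free-product normal forms (an infinite cyclic factor, or else $ab$ for generators of two finite factors), and then apply one geometric classification: a cocompact discrete group contains no parabolics, so every infinite-order element of $\Gamma$ is a translation or a glide, and a glide squared is a translation. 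What your route buys: it treats $\mathbb{E}^2$ and $\mathbb{H}^2$ uniformly, and it replaces the external result of \cite{Friedrich2008} with the elementary normal-form fact about free products. What the paper's route buys: the lemma becomes independent of the structure theory invoked in Theorem~\ref{thm:edgedel}; in particular, your trichotomy silently assumes that a proper infinite $H$ has \emph{infinite index} in $\Gamma$, which is what makes \cite{Hoare1972,Hoare1973} applicable, whereas the paper's argument covers an arbitrary infinite subgroup. That assumption is harmless --- a proper subgroup of finite index in a cocompact group is itself cocompact, hence contains translations by the same argument you give for $H=\Gamma$ --- but you should state this case explicitly if you keep the free-product route.
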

\begin{proof}
First note that if $H$ contains a translation isometry then it must be infinite.  And recall that a glide applied twice is a translation.  

Now assume $H$ is an infinite group of isometries of $\mathbb{H}^2$. 
If $H$ is abelian, then it must be generated by a single isometry of infinite order, i.e., a translation, glide or parabolic rotation (an isometry with a single fixed point at infinity).  This last case can be ruled out as our tiling group $\Gamma$ is assumed to have a compact orbifold. 
If $H$ is non-abelian, then we have a result from~\cite{Friedrich2008} that such a group must contain a translation. 

Finally, we consider the case that $H$ is an infinite discrete subgroup of isometries of $\mathbb{E}^2$: 
$H \subset O(2) \rtimes \mathbb{R}^2$. 
If $H$ consists entirely of rotations and/or reflection isometries, then it must be a discrete subgroup of $O(2)$, and therefore finite. 
Since we assume $H$ is infinite, not all its elements can be rotations and reflections and we see that it must contain a translation or glide. 
$\qed$ 
\end{proof}

\begin{lemma}\label{thm:simpleconn}
Let $(\mathcal{T},\Gamma)$ be a  tiling obtained via edge deletion from a fundamental tile-1-transitive tiling as in Theorem~\ref{thm:edgedel}.  If the stabiliser group $H$ is infinite, then the tile $T$ is simply connected: it is a ribbon or branched ribbon. 
\end{lemma}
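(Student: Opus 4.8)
The plan is to identify $T$ with a topological model built from the free-product structure of its stabiliser and then read off simple connectivity from that model. By Lemma~\ref{lemma:infstab} the infinite group $H$ contains a translation, and by the results of Hoare cited in Section~\ref{groups} together with the Bass--Serre description of groups acting on trees~\cite{stilwell2002trees}, $H$ is a free product of cyclic groups $H = A_1 \star \cdots \star A_m$ with at least one infinite factor. Recall from the proof of Theorem~\ref{thm:edgedel} that $T = \bigcup_{\eta \in H} \eta(F)$, that $F$ is a topological disk serving as a fundamental domain for the $H$-action on $T$, and that two distinct copies $\eta(F)$ and $\eta'(F)$ share a full tile edge precisely when $\eta^{-1}\eta'$ is one of the edge-generators $[f_i]^{\pm 1}$ associated with an erased edge.

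First I would cover $T$ by the closed disks $\{\eta(F)\}_{\eta \in H}$ and verify that every nonempty intersection of copies is contractible: distinct copies meet either in a single shared edge, or in a single shared vertex (the latter occurring at the preimages of cone and corner points), or not at all. Since each $\eta(F)$ is contractible, the union is locally finite, and all multiple intersections are points or edges, the Nerve Lemma applies and $T$ is homotopy equivalent to the nerve $\mathcal{N}$ of this cover. The $1$-skeleton of $\mathcal{N}$ is exactly the Cayley graph of $H$ with respect to the generating set $\{[f_i]\}$, while the higher simplices of $\mathcal{N}$ record the clusters of copies that share a common vertex at a preimage of a cone or corner point.

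The heart of the argument is to show $\mathcal{N}$ is simply connected. I would organise the copies according to the Bass--Serre tree $\mathcal{B}$ on which $H$ acts with the cyclic factors $A_j$ as vertex stabilisers. The copies corresponding to a single vertex of $\mathcal{B}$ form a cluster: for a finite factor $A_j = C_N$ this cluster is a fan of $N$ disks glued cyclically around the single cone or corner point they share, hence a disk; for an infinite factor it is a bi-infinite chain of disks glued edge-to-edge, again contractible. Each such cluster is therefore contractible, and distinct clusters are glued along single edges exactly as dictated by the edges of $\mathcal{B}$. Collapsing every cluster to a point turns $\mathcal{N}$ into $\mathcal{B}$ without changing the fundamental group, and since $\mathcal{B}$ is a tree we conclude $\pi_1(\mathcal{N}) = \pi_1(T) = 1$. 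Equivalently, and as a useful check, one can view $p_H : T \to T/H$ as a regular orbifold covering with deck group $H$ whose base is the compact surface-with-boundary orbifold $F/\!\sim$ obtained by gluing only the erased edges; since $\pi_1^{orb}(T/H) \cong H$ by Wilkie's generator-relation correspondence restricted to the erased edges, and $T$ carries no singular points of its own, $T$ is the orbifold universal cover and hence simply connected.

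Finally, simple connectivity combined with unboundedness (Theorem~\ref{thm:edgedel} and Lemma~\ref{lemma:infstab}) identifies $T$ as a simply connected unbounded planar region whose gluing pattern is governed by the tree $\mathcal{B}$: when $\mathcal{B}$ is a line, so that $H$ is one of the frieze groups of Table~\ref{tab:frieze_presentations}, $T$ is homeomorphic to a strip and is a ribbon; when $\mathcal{B}$ has branch points, $T$ is a branched ribbon. I expect the main obstacle to be the intersection and clustering bookkeeping: rigorously ruling out intersections of copies that are disconnected or otherwise non-contractible (so that the Nerve Lemma is legitimately applicable) and verifying that the cone- and corner-point clusters are precisely the vertex fans of $\mathcal{B}$, so that collapsing them recovers the Bass--Serre tree rather than a graph with residual loops. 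The orbifold-covering reformulation provides an independent route past this bookkeeping, at the cost of checking that $\pi_1^{orb}(T/H)$ is generated exactly by the erased-edge elements.
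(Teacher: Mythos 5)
Your proposal contains genuine gaps, and the machinery it invokes is far heavier than what the statement requires. The first gap is in the Nerve Lemma step: you need every nonempty intersection $\eta(F)\cap\eta'(F)$ to be contractible, but this is neither automatic nor something you verify, and it is exactly the point at issue. Two copies of a (non-convex) fundamental domain meet in a disconnected set precisely when their union encloses a hole; in the ambient fundamental tiling $\mathcal{F}$ such configurations are perfectly possible (the hole is filled by other bounded copies of $F$), and ruling out that this happens \emph{inside} $T$ is essentially equivalent to the simple connectivity you are trying to prove. So the ``bookkeeping'' you defer is not bookkeeping; it is the theorem. Related technical problems: pairs of copies sharing only a vertex contribute $1$-simplices to the nerve (not only higher simplices), so the $1$-skeleton is not the Cayley graph of $H$; and ``collapsing every cluster to a point'' is not a well-defined quotient, since each copy $\eta(F)$ lies in several clusters, so the claim that this produces the Bass--Serre tree without changing $\pi_1$ is unsupported. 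The second gap is that your proposed independent route is circular. For the branched cover $T\to T/H$ one has the exact sequence $1\to\pi_1(T)\to\pi_1^{orb}(T/H)\to H\to 1$, so the assertion that the natural map $\pi_1^{orb}(T/H)\to H$ is an isomorphism is \emph{equivalent} to $\pi_1(T)=1$. Wilkie's generator--relation correspondence cannot simply be ``restricted to the erased edges'': it is a statement about fundamental domains of groups acting on the simply connected plane, and invoking it for $T/H$ presupposes that $T$ plays the role of the plane, i.e.\ is simply connected.

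The paper's own proof shows none of this apparatus is needed; it is a short planarity argument. By Theorem~\ref{thm:edgedel}, every tile of $\mathcal{T}$ is an isometric copy $\gamma T$, these copies have pairwise disjoint interiors and cover $\mathcal{X}$, and since $H$ is infinite each copy is unbounded (it contains infinitely many copies of $F$). If $T$ were not simply connected, it would, as a closed subset of the plane, enclose a bounded complementary region; that region must be covered by other tiles, but any tile meeting it cannot cross the boundary of the region (which lies in $T$, whose interior is disjoint from the other tiles'), so it is trapped in a bounded set --- contradicting unboundedness. Note that this argument also explains why your nerve-theoretic picture is clean \emph{after the fact}: once simple connectivity is known, unions of copies inside $T$ cannot enclose holes, so the intersections behave as you assumed --- but you cannot assume that structure on the way to proving it.
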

\begin{proof}
From Theorem~\ref{thm:edgedel}, it follows that $\mathcal{X}$ is the union of path-connected, unbounded tiles of the form $\gamma T$ for $\gamma \in \Gamma$ and that all of these tiles have disjoint interiors. 
If $T$ is not simply connected, then it bounds a region of $\mathcal{X}$ that is covered by isometric copies of $T$, which is clearly a contradiction. 
$\qed$
\end{proof}

\bigskip 
 
We briefly discuss the tile-$k$-transitive case. By the results of section \ref{tilings}, every tile-$k$-transitive fundamental tiling comes from splitting an initial fundamental tile-$1$-transitive tile into $k$ pieces, each homeomorphic to a disk. 
Now suppose that $0 < j \leq k$ tiles are fundamental, and consider what edges may be erased so that we obtain a tiling with $(j-1)$ fundamental tiles.  
We want to stay within the class of tile-$k$-transitive tilings, so the allowed deletions are restricted to edges that are incident to only one symmetry class of fundamental tile, as required in the classical setting. 
The construction of the glued tile, $T$, and its stabiliser group, $H$, then proceeds in the same manner as for the tile-1-transitive case discussed above. 
Each deleted edge has a simple orbifold loop associated to it avoiding all other edges, so the resulting tile is path-connected, but not necessarily homeomorphic to a disk, nor simply connected.  
If $H$ is finite, then the topology of $\cl(T)$ may be that of a closed disk (as before) or a closed disk with finitely may holes.  
If $H$ is infinite, then it can have finite or infinite index in $\Gamma$.  
The first case occurs only if $H = \Gamma$. Indeed, the existence of even a single edge in the graph $C$ embedded in $\mathcal{X}/\Gamma$ results in infinitely many connected edge components in the corresponding tiling in $\mathcal{X}$. We call the tiling in this case ``patchy.'' 
That is, the tile with stabiliser group $H$ may have bounded holes containing other tiles; Euclidean examples of these are enumerated in~\cite{Husonpartial1993}. 
In the case $H$ has infinite index in an NEC group, it must be a free product of groups of the form in~\cite{Hoare1973}. 
The Euclidean case is called ``stripey'' in~\cite{Husonpartial1993}: the complementary region in those examples is a tile with infinite stabiliser group and so a ribbon.

\section{Enumeration and classification of branched ribbon tilings}
\label{sec:gendsymbs}

We are now in a position to enumerate crystallographic tilings with ribbon tiles. The steps are as follows: 
\begin{enumerate}
\item Select a symmetry group of interest, and construct its orbifold.
\item Enumerate the possible tile-1-transitive fundamental tilings with methods described in \cite{Lucic1991,Lucic2018} or \cite{Huson1993}, and represent these as graphs embedded in the orbifold.  
\item Systematically delete subsets of edges from the embedded graphs as described in the section above to derive all tile-1-transitive non-fundamental tilings, both regular ones with bounded tile and ribbon ones with unbounded ribbon or  branched-ribbon tiles. 
\item To enumerate tile-2-transitive tilings, apply Huson's SPLIT algorithm to the fundamental-domain tile, then systematically delete allowed sets of edges incident to one tile type, and then the second tile type. 
\item To build more complex examples, apply successive split operations to fundamental tiles, followed by edge erasing or tile glueing on each type of tile.
\end{enumerate}

Steps 1-3 are illustrated for the wallpaper group \emph{pmg} with Euclidean orbifold \orb{22\star} in figures~\ref{fig:EuclideanEx}--\ref{fig:EuclideanEx4}.  

\begin{figure}[!tbp]
  \begin{subfigure}[t]{0.3\textwidth}
  \centering
    \includegraphics[width=\textwidth]{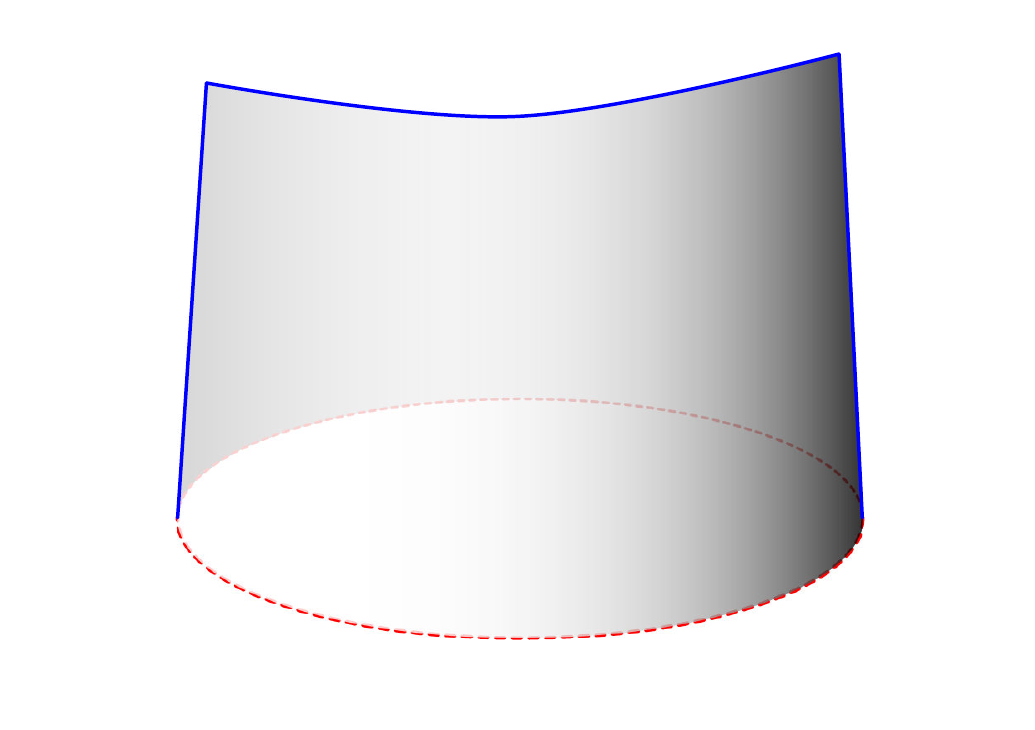}
    \caption{The Euclidean orbifold \orbcap{22\star} which is a sphere with one boundary curve (the red dotted line) and two cone points of order 2.}\label{subfig:22star_orb}
  \end{subfigure}
  \hfill
  \begin{subfigure}[t]{0.3\textwidth}
  \centering
    \includegraphics[width=\textwidth]{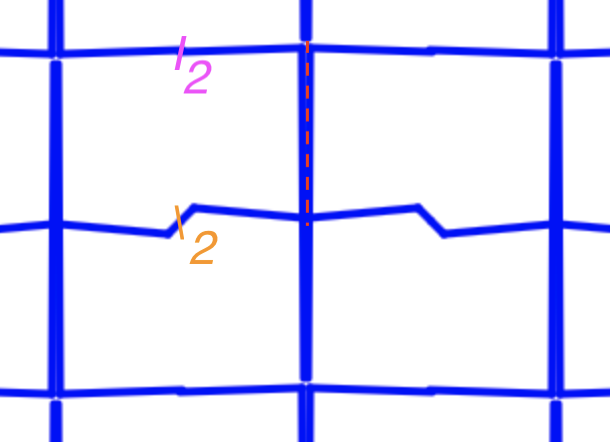}
  \caption{A slightly deformed standard fundamental domain for \orbcap{22\star}.  Four fundamental domains make up the rectangular translational unit cell shown.}\label{subfig:22star_FD_st}
  \end{subfigure}
    \hfill
  \begin{subfigure}[t]{0.3\textwidth}
  \centering
    \includegraphics[width=\textwidth]{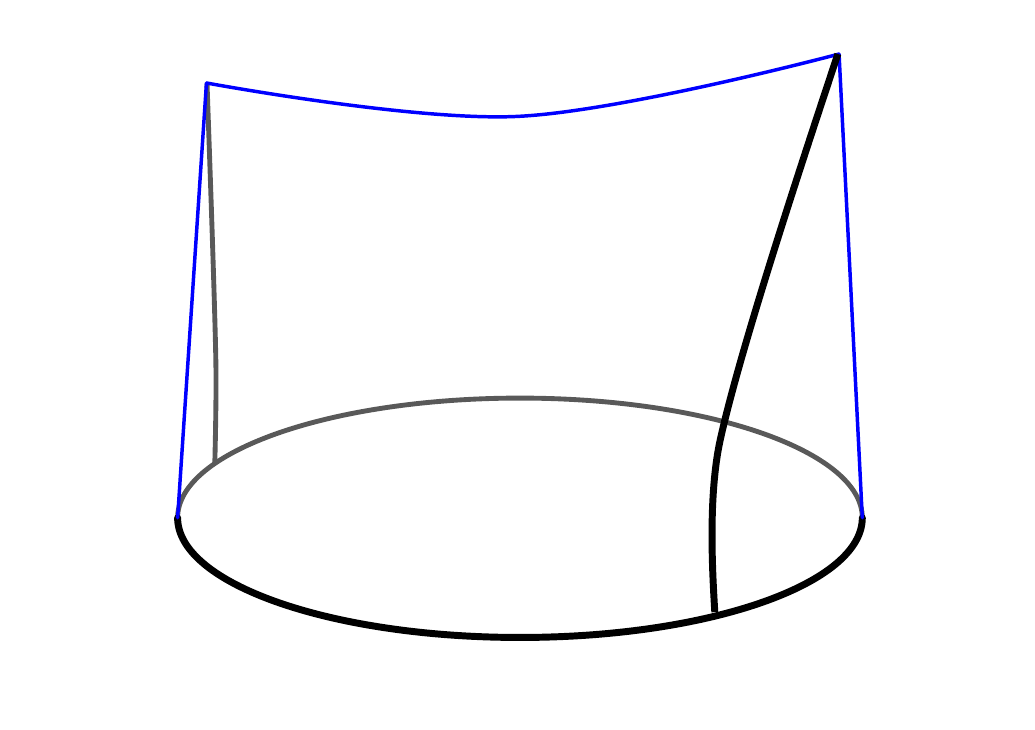}
  \caption{The standard fundamental domain drawn as a graph embedded in the orbifold quotient space.}\label{subfig:22star_orb_st}
  \end{subfigure}
 \caption{An example of a Euclidean orbifold quotient space, its geometric realisation and edge graph diagram.  The vertical lines of (b) lie along the mirrors and the two inequivalent centres of rotation are marked. }\label{fig:EuclideanEx}
\end{figure}

\begin{figure}[!tbp]
  \begin{subfigure}[t]{0.3\textwidth}
  \centering
    \includegraphics[width=\textwidth]{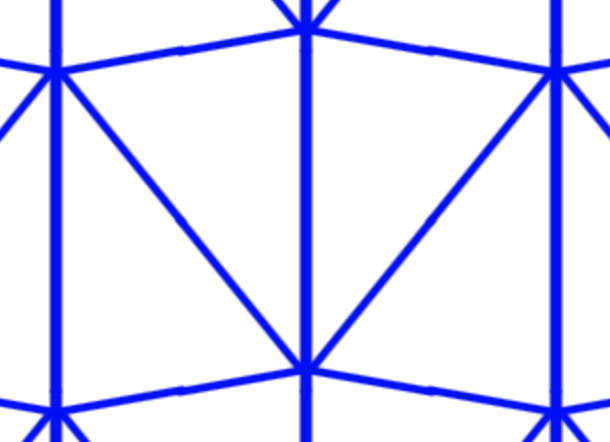}
    \caption{ }\label{subfig:22star_a}
  \end{subfigure}
  \hfill
  \begin{subfigure}[t]{0.3\textwidth}
  \centering
    \includegraphics[width=\textwidth]{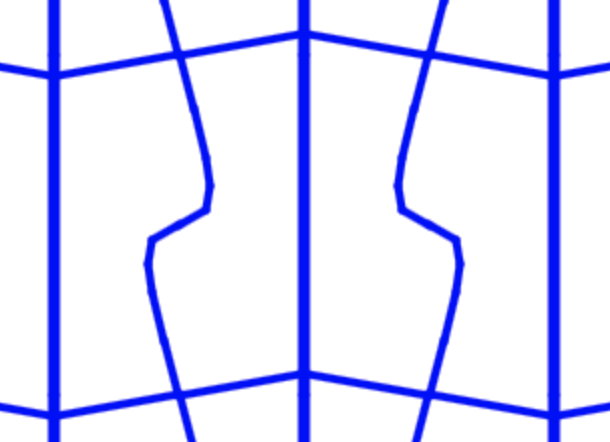}
  \caption{ }\label{subfig:22star_b}
  \end{subfigure}
    \hfill
  \begin{subfigure}[t]{0.3\textwidth}
  \centering
    \includegraphics[width=\textwidth]{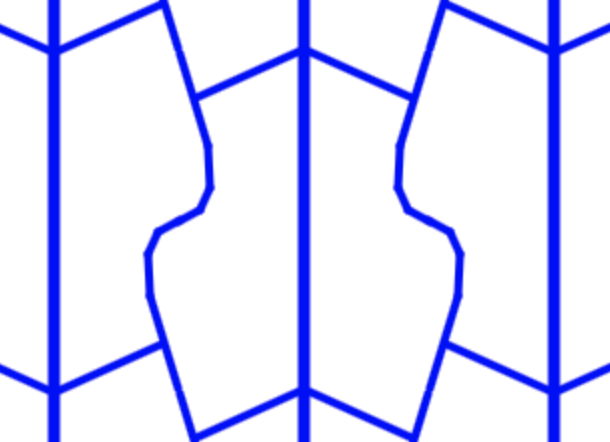}
  \caption{ }\label{subfig:22star_c}
  \end{subfigure}
  \\
    \begin{subfigure}[t]{0.3\textwidth}
  \centering
    \includegraphics[width=\textwidth]{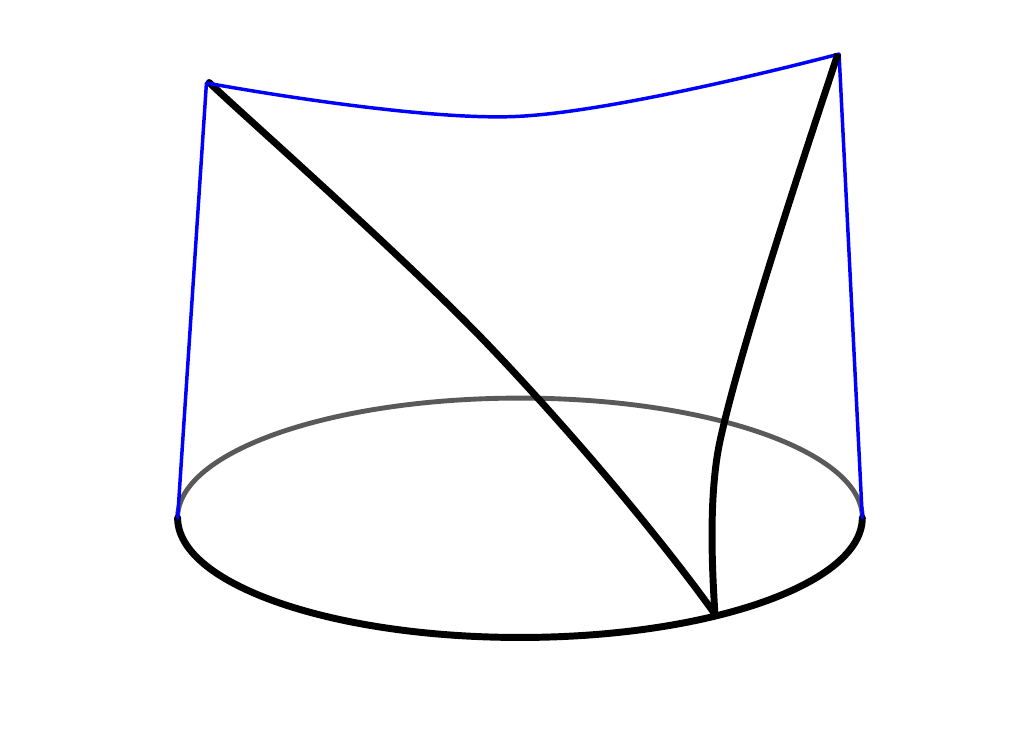}
    \caption{ }\label{subfig:22star_d}
  \end{subfigure}
  \hfill
  \begin{subfigure}[t]{0.3\textwidth}
  \centering
    \includegraphics[width=\textwidth]{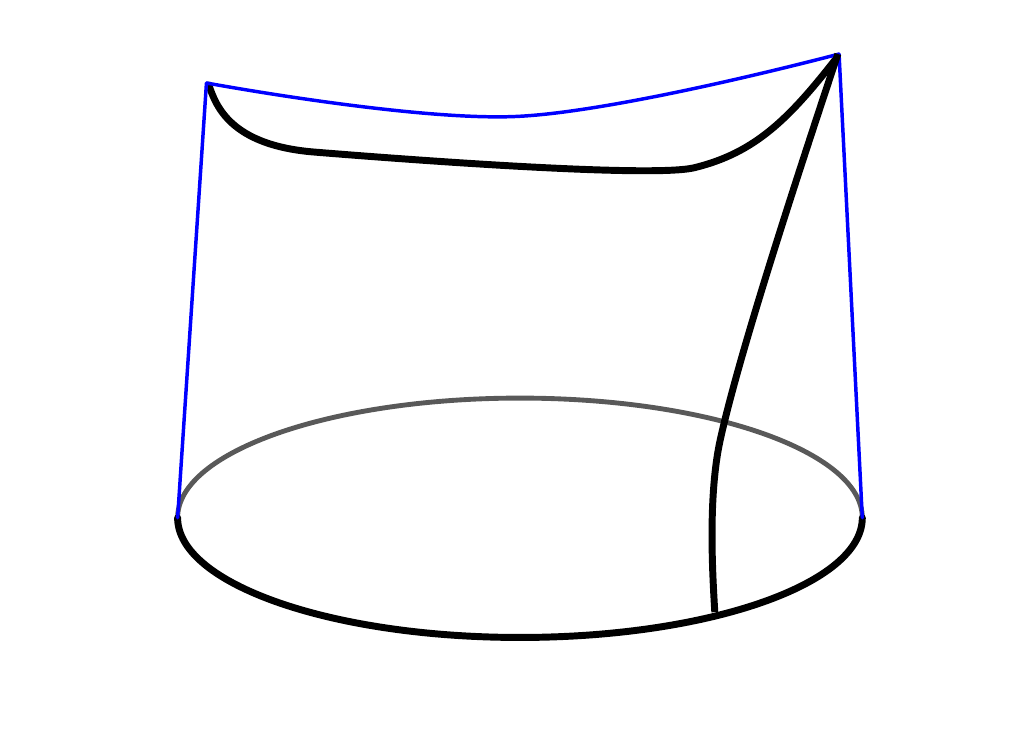}
  \caption{ }\label{subfig:22star_e}
  \end{subfigure}
    \hfill
  \begin{subfigure}[t]{0.3\textwidth}
  \centering
    \includegraphics[width=\textwidth]{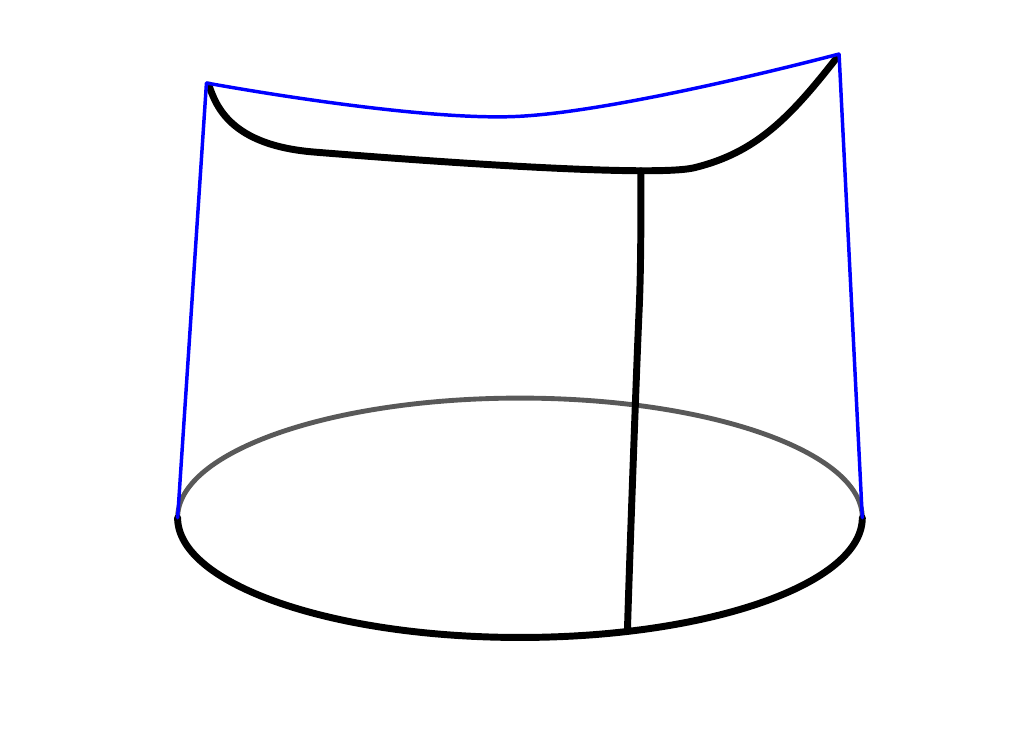}
  \caption{ }\label{subfig:22star_f}
  \end{subfigure}
 \caption{The other combintorial types of fundamental domain for \orbcap{22\star} with (a) triangular, (b) quadrilateral and (c) pentagonal polygon regions. (d)--(f) The corresponding edge graphs embedded in the orbifold quotient space. }\label{fig:EuclideanEx2}
\end{figure}

\begin{figure}[!tbp]
  \begin{subfigure}[t]{0.3\textwidth}
  \centering
    \includegraphics[width=\textwidth]{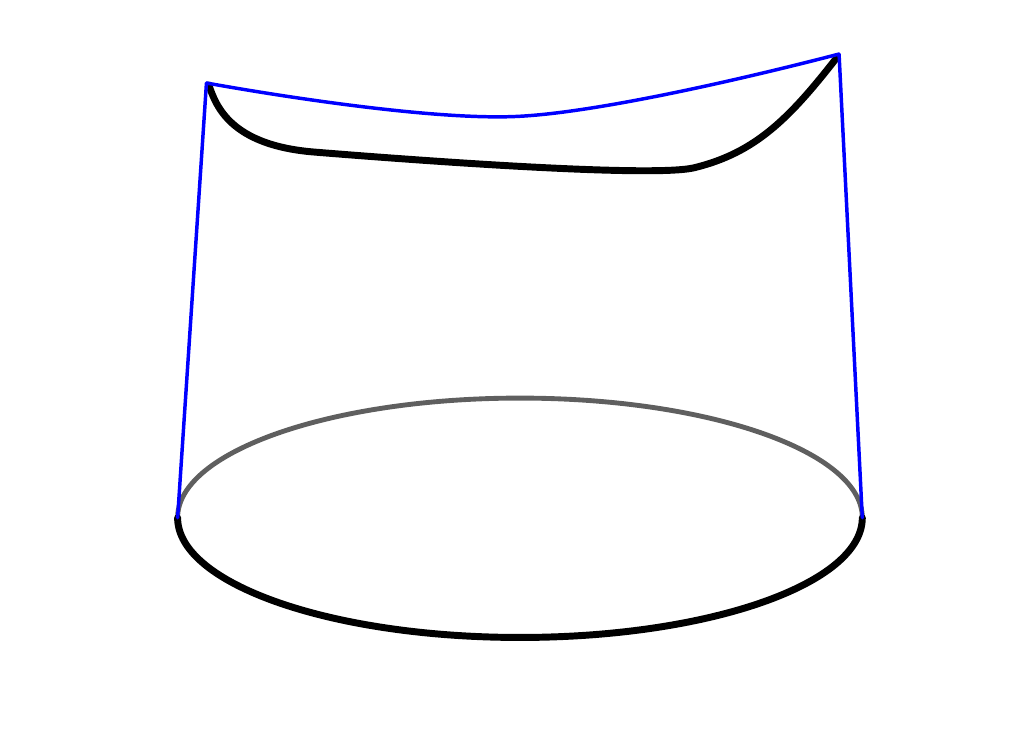}
    \caption{Ribbon tile with stabiliser \orbcap{\infty\infty}.}\label{subfig:22star_ra_orb}
  \end{subfigure}
  \hfill
  \begin{subfigure}[t]{0.3\textwidth}
  \centering
    \includegraphics[width=\textwidth]{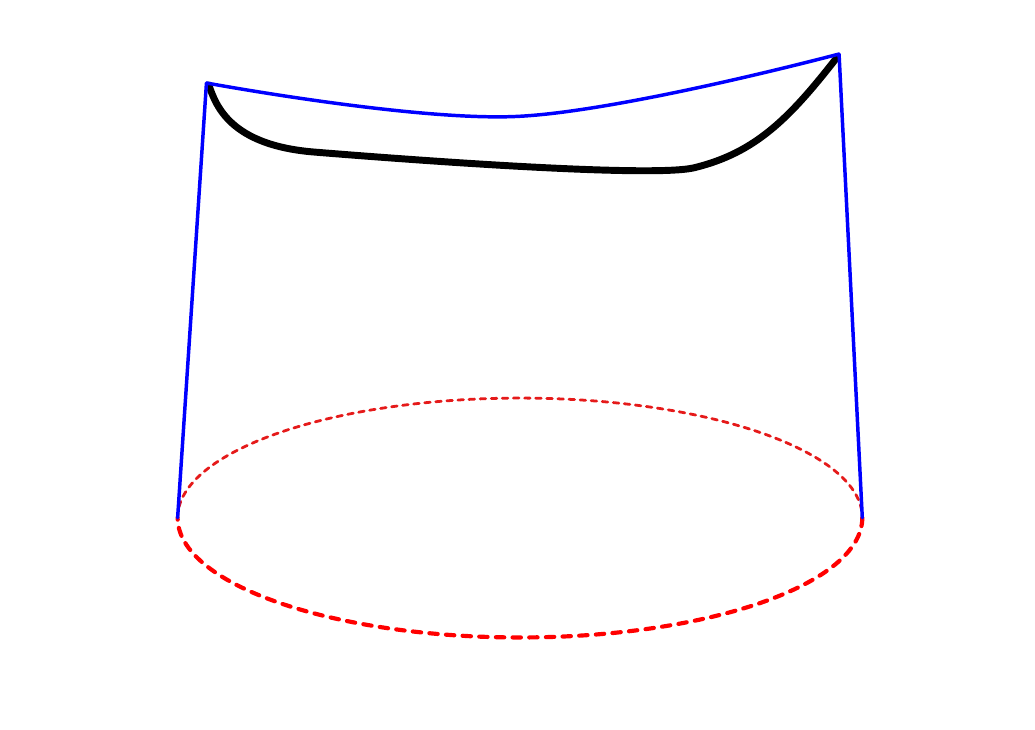}
  \caption{Ribbon tile with stabiliser \orbcap{\infty\star}. }\label{subfig:22star_rb_orb}
  \end{subfigure}
    \hfill
  \begin{subfigure}[t]{0.3\textwidth}
  \centering
    \includegraphics[width=\textwidth]{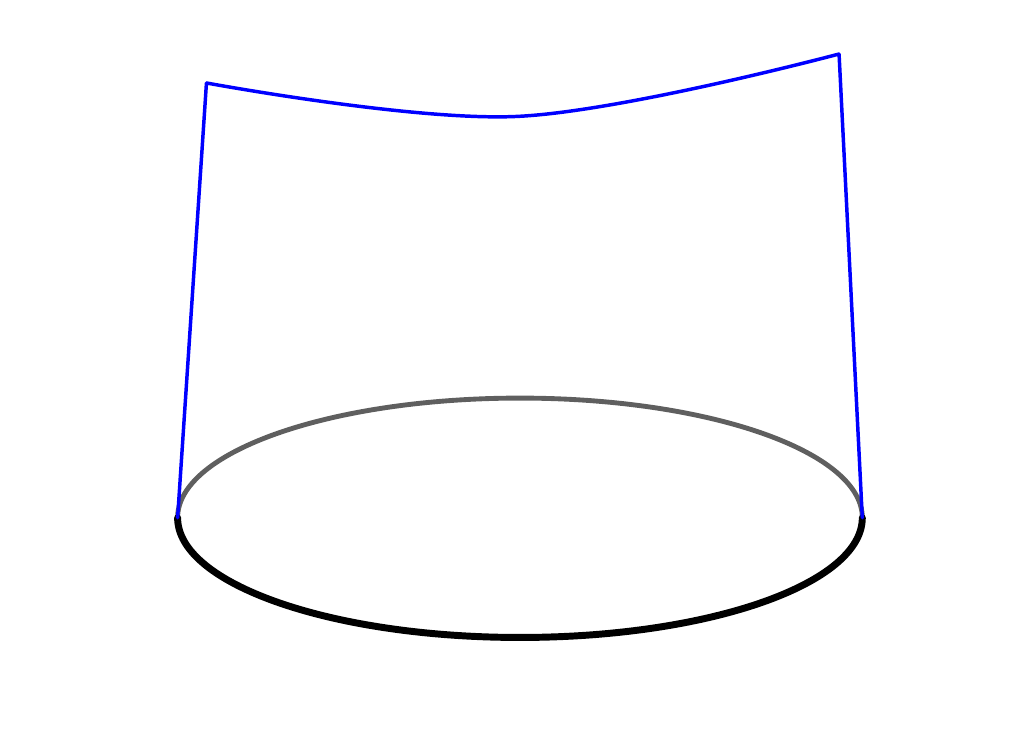}
  \caption{Ribbon tile with stabiliser \orbcap{22\infty}.  }\label{subfig:22star_rc_orb}
  \end{subfigure}
  \\
   \centerline{
\begin{subfigure}[t]{0.3\textwidth}
  \centering
    \includegraphics[width=\textwidth]{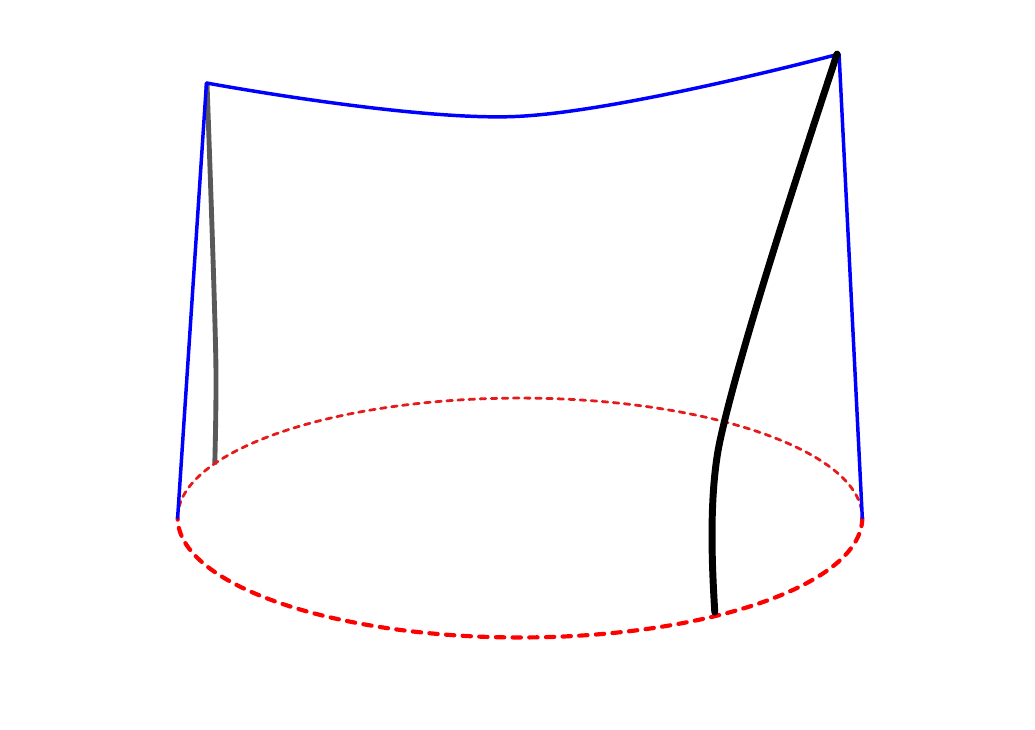}
    \caption{ Ribbon tile with stabiliser \orbcap{\star\infty\infty}. }\label{subfig:22star_rd_orb}
  \end{subfigure}
 \hspace{4em}
  \begin{subfigure}[t]{0.3\textwidth}
  \centering
    \includegraphics[width=\textwidth]{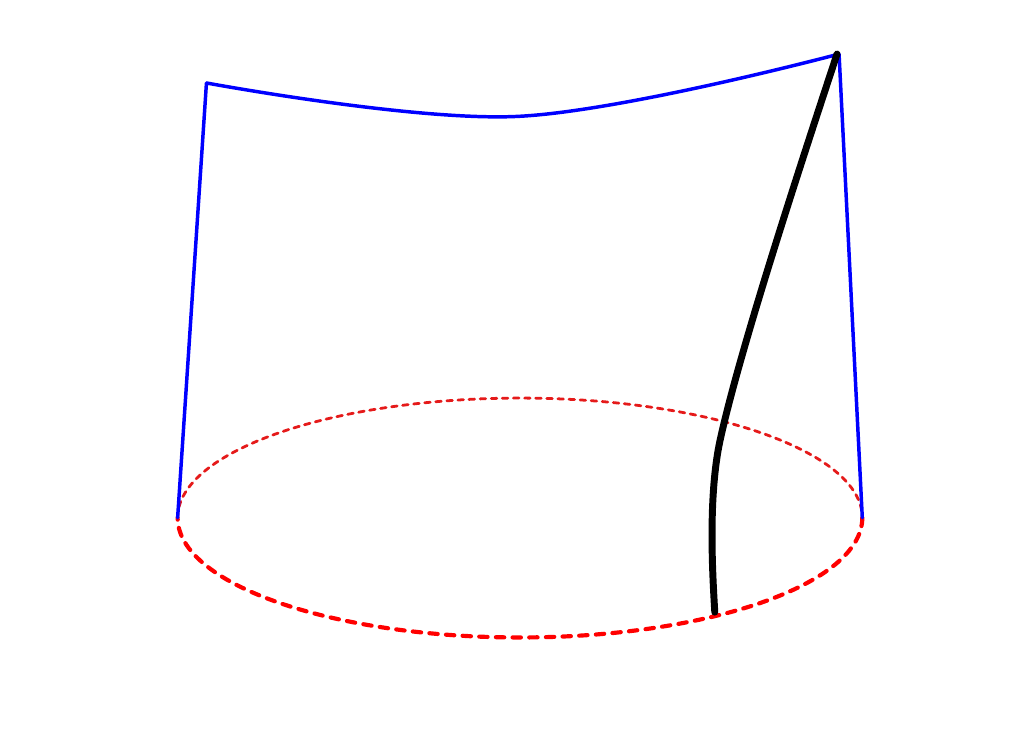}
  \caption{ Ribbon tile with stabiliser \orbcap{2\star\infty}. }\label{subfig:22star_re_orb}
  \end{subfigure}
}
 \caption{By deleting subsets of edges from the four fundamental domain edge graphs shown in figures \ref{fig:EuclideanEx} and  \ref{fig:EuclideanEx2}, we obtain five possible non-fundamental tilings of \orbcap{22\star} with frieze group stabilisers.}  
 \label{fig:EuclideanEx3}
\end{figure}

\begin{figure}
 \begin{subfigure}[t]{0.3\textwidth}
  \centering
    \includegraphics[width=\textwidth]{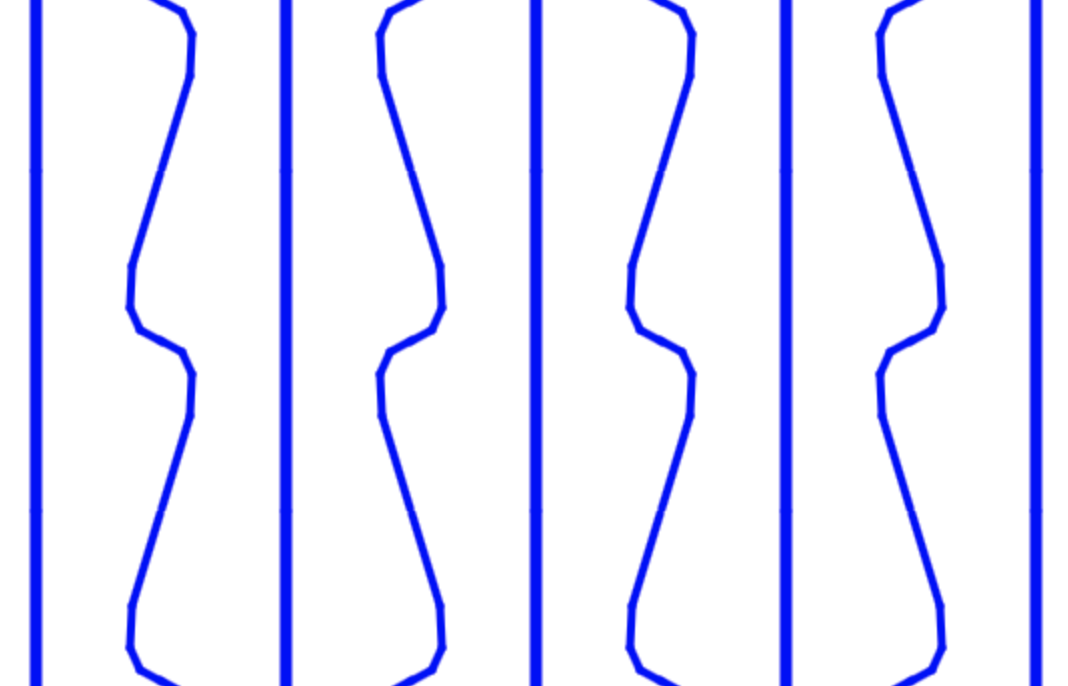}
    \caption{Ribbon tile with stabiliser \orbcap{\infty\infty}.}\label{subfig:22star_ra}
  \end{subfigure}
  \hfill
  \begin{subfigure}[t]{0.3\textwidth}
  \centering
    \includegraphics[width=\textwidth]{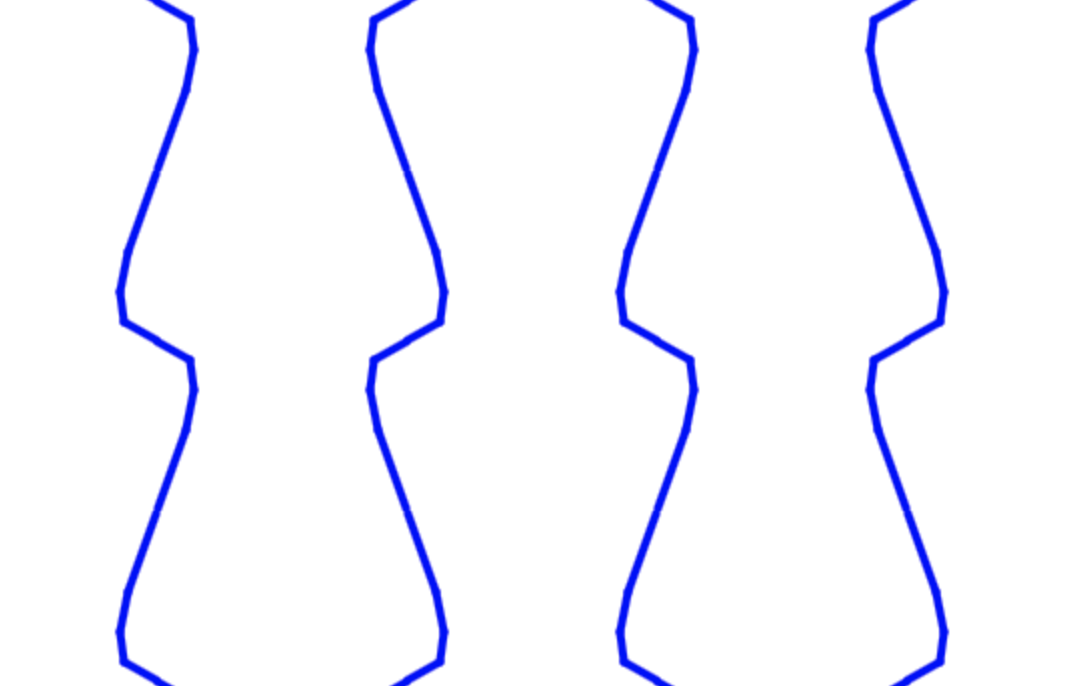}
  \caption{ Ribbon tile with stabiliser \orbcap{\infty\star}. }\label{subfig:22star_rb}
  \end{subfigure}
    \hfill
  \begin{subfigure}[t]{0.3\textwidth}
  \centering
    \includegraphics[width=\textwidth]{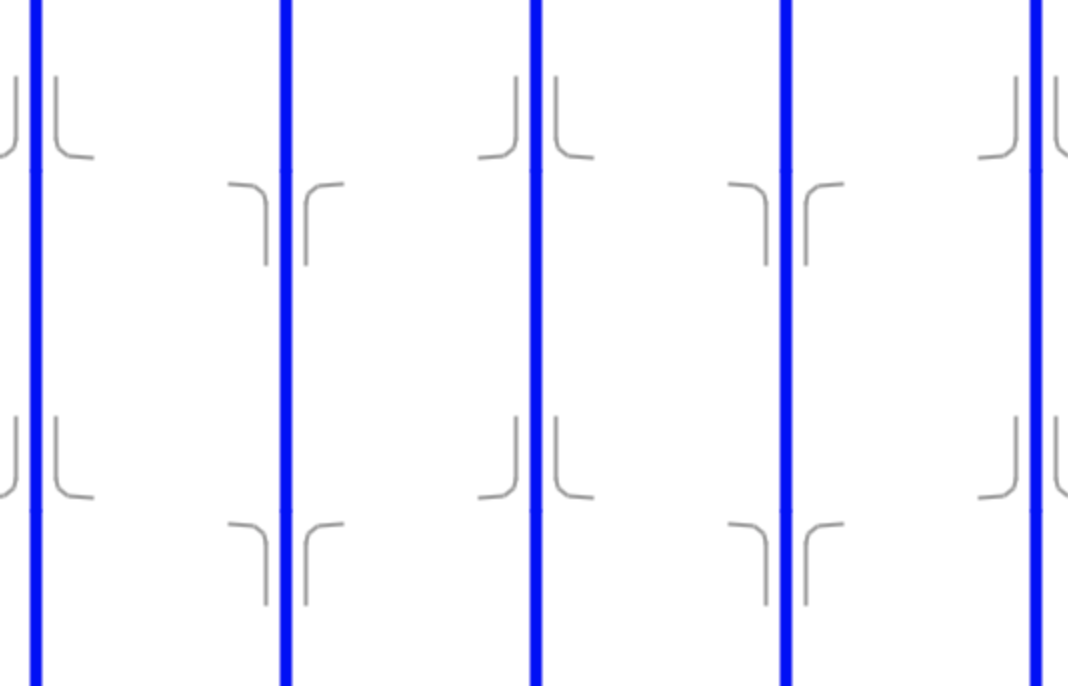}
  \caption{Ribbon tile with stabiliser \orbcap{22\infty}.  }\label{subfig:22star_rc}
  \end{subfigure}
  \\
\centerline{    
    \begin{subfigure}[t]{0.3\textwidth}
  \centering
    \includegraphics[width=\textwidth]{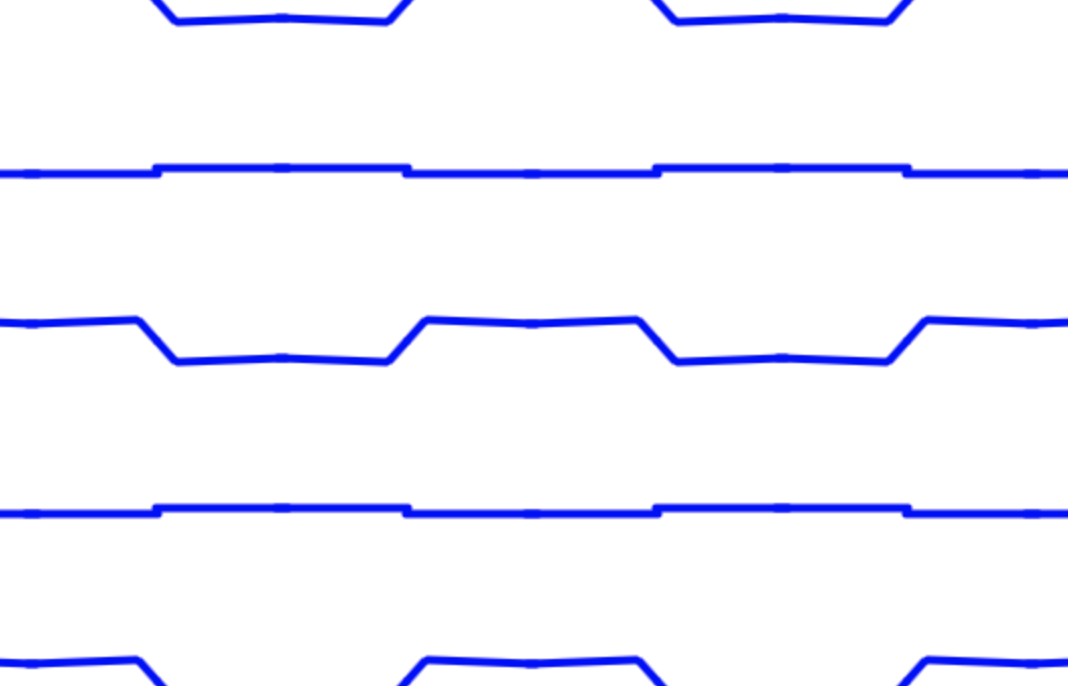}
    \caption{Ribbon tile with stabiliser \orbcap{\star\infty\infty}. }\label{subfig:22star_rd}
  \end{subfigure}
 \hspace{4em}
  \begin{subfigure}[t]{0.3\textwidth}
  \centering
    \includegraphics[width=\textwidth]{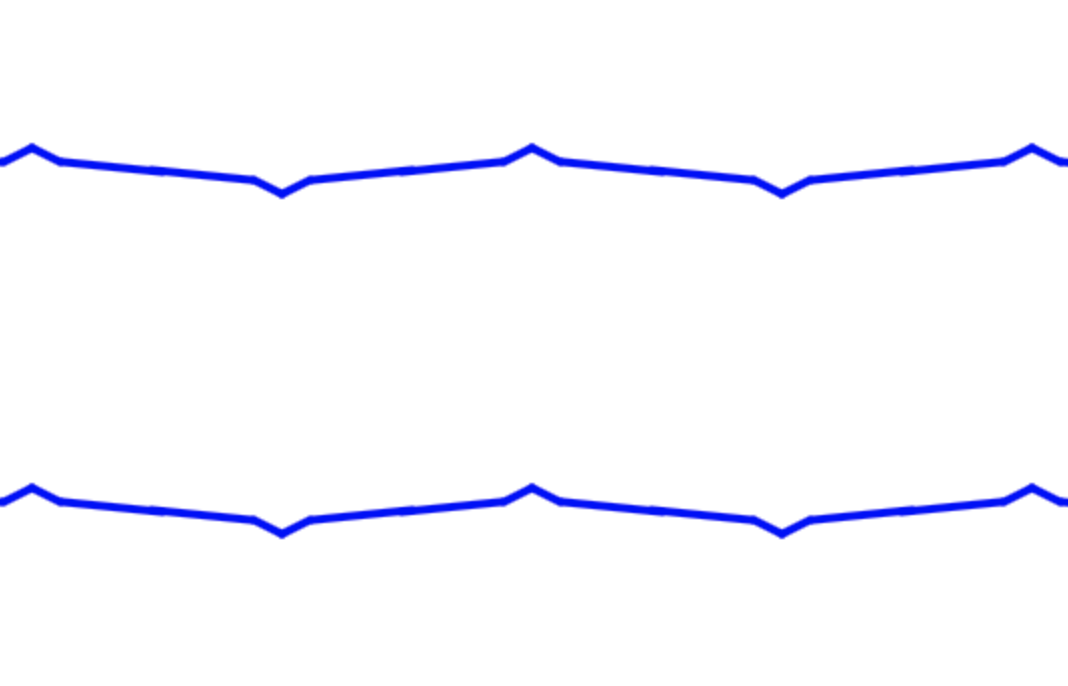}
  \caption{ Ribbon tile with stabiliser \orbcap{2\star\infty}. }\label{subfig:22star_re}
  \end{subfigure}
  }
   \caption{These ribbon tiling patterns correspond directly to the quotient graph diagrams in figure \ref{fig:EuclideanEx3}. Each drawing shows 2 by 2 translational unit cells in the symmetry group \orbcap{22\star}.  The symmetry of the ribbon in (c) can only be geometrically depicted by marking the tiles, e.g. with an `L' motif. }\label{fig:EuclideanEx4}
\end{figure}

The resulting list of tilings from this enumeration will naturally contain equivariantly equivalent duplicates. 
For the tile-1-transitive cases where the orbifold topology and singular locus is not too complex, it is possible to determine the equivalence classes by eye.  
For the more complex orbifolds, and tile-k-transitive cases, it is desirable to have a computable invariant. 
As already known, the D-symbol provides such an invariant for tilings by disks. 
We now present a method for the general case, based on introducing coloured edges to the tiles with infinite stabiliser group to obtain a classical tiling-by-disks. 
As there is more than one way to introduce the coloured edges, we describe how to find a minimal symbol, and call this the \emph{coloured D-symbol}.

Given a ribbon tile-$k$-transitive tiling of the Euclidean or Hyperbolic plane, first identify a crystallographic symmetry group acting on the tiling $(\mathcal{T},\Gamma)$ and some fundamental domain $F$ for the symmetry group.   
Map this fundamental domain and the tile pattern onto the symmetry group quotient space, i.e. its orbifold, $\mathcal{O}$.  
Note that the pattern of tile ``edges'' on the orbifold is not necessarily connected and may contain simple closed curves with no natural ``vertex'' (i.e. no intersection with the singular locus of $\mathcal{O}$).  
We therefore place a single vertex at an arbitrary point on any such loop.  
The next step is to cut up the orbifold along the existing edges (coloured black) to obtain $k$ pieces, with tile vertices, orbifold cone points, mirror boundaries and corner points marked with their order.  
Any piece that comes from a bounded tile can be treated as per the usual D-symbol process of barycentric subdivision.  
All other pieces require extra cuts to create a polygonal domain for barycentric subdivision.  We will colour these edges green. 
\begin{enumerate}
\item Let $T$ be a tile from $\mathcal{T}$ with infinite stabiliser group $H \subset \Gamma$, and let $T/H$ be the decorated quotient space with black tile edges and vertices, cone points, mirror boundaries and corner points marked.  $T/H$ is a compact 2-manifold with $m$ boundary components, where these can now be mirror boundaries or tile edges. Cap the boundary components to obtain a closed surface $\mathcal{A}$ and compute the genus, $g(\mathcal{A}) \geq 0$ and orientability $\alpha$. 
\item Add green edges, $M$, to all mirror boundaries not already coloured black.  If the mirror boundary has no corner points or intersection with black tile edges, then insert a vertex at an arbitrary point.  
\item If $g(\mathcal{A}) > 0$, construct all possible graphs $C$ consisting of a single vertex $v$ and $\alpha g$ loops (with $\alpha = 2$ if $\mathcal{A}$ is orientable and $\alpha = 1$ if not) such that $\mathcal{A} \setminus C$ is a topological disk. We then embed the graph $C$ in $T/H$, treated as a subspace of $\mathcal{A}$.
\item Discard a neighbourhood of the vertex $v$ to leave $\alpha g$ disjoint paths with $2\alpha g$ endpoints $v_i$.  
\item Construct all possible trees $S$ on $T/H$ that do not intersect the $\alpha g$ paths (except at the endpoints) such that $S$ spans the following vertices: 
\begin{enumerate}
 \item the path endpoints $v_i$, each of degree one. 
 \item cone points not already marked as belonging to a tile edge. 
 \item a single point on each boundary component, at an existing vertex of minimal degree.
\end{enumerate}
\item Let $U$ be the union of the green edges from $M$, one choice of $C$, and one choice of $S$. 
\item (*) From all possibilities for $U$, keep those such that $T/H \setminus U$ is a combinatorial polygon with a minimal number of edges. 
\end{enumerate}
The above algorithm is adapted from~\cite{Lucic2018}, but we have restricted it to insert as few vertices and edges as possible.  
Note that there must be at least one tile boundary component in $T/H$, so the construction of the tree, $S$, is well defined.  
Illustrations are provided for an example in the following section. 

Once we have candidate sets of green edges $U$, that cut each tile class into a minimal polygon, we reassemble the fragments to form a tiling-by-disks with two types of edge colour. 
From this, we create a D-symbol for each possible combination of cuts found in step $(*)$.  
A canonical representative D-symbol for the ribbon tiling will be the one that appears first in the ordering as defined in~\cite{Delgado-Friedrichstilings}. 
Two such D-symbols will encode equivariantly equivalent tilings if they are isomorphic in the usual sense, with the extra requirement that the isomorphism preserves the edge-colours. 

The proof that this algorithm leads to a unique D-symbol encoding a ribbon tiling follows from the corresponding statement for classical tilings, for which the isomorphism class of a D-symbol completely characterises the isomorphism class of the symmetry group and the tiling combinatorics. 
The crux of the matter is that the above recipe leads to a unique classical tiling and D-symbol starting from any geometric realisation of the ribbon tiling. 
The constructions involved are made in the quotient space $T/H \subset \mathcal{O}$ and so do not depend on the particular realization of the tiling in $\mathcal{X}$.  
Also, the edges we insert must be part of a graph that determines a fundamental tiling of $T/H$.  
The uniqueness is guaranteed by the fact that there are a finite number of combinatorially distinct fundamental tilings for a given wallpaper or NEC group, and the possibility to rank D-symbols by the methods of~\cite{Delgado-Friedrichstilings}.

\bigskip

The enumeration of non-fundamental tilings, in both the classical (disk-like) and ribbon tiling cases effectively constructs all the infinite-index subgroups generated by a subset of the parent group generators.  
The subgroups are the stabiliser groups of the non-fundamental tile while the parent group generators are dictated by the particular fundamental domain that we start with. 
We observe that some stabiliser subgroups are derivable from all fundamental domains for a particular group, but this does not always hold.  
In fact, there are some non-fundamental tilings that arise from edge deletion in just one fundamental domain (see figure~\ref{subfig:22star_rd_orb} for example).  

\section{Hyperbolic tiling examples}\label{sec:ex}

In this section we look at further examples of ribbon tilings that illustrate the theory developed above and show how to apply our results in practice. We start with an example of the classification of a ribbon tiling via coloured D-symbols.

Consider the ribbon tiling in figure \ref{fig:2223_1}. We identify its symmetry group as $G=\orb{2223}$. Figure \ref{subfig:2223_1b} shows a fundamental domain with the cone points on its boundary. We see that this ribbon tiling has only one class of edge and tile. 
The orbifold is (topologically) a sphere with four cone points, and the ribbon edge joins the cone point of order 3 to one of order 2; see figure \ref{subfig:2223_1c}. 
There are two possible trees that span the ribbon vertex and the two other cone points, as required by the algorithm at the end of Section \ref{sec:gendsymbs}, one of these is drawn with green edges in figure \ref{subfig:2223_1c}. 
The corresponding tilings are shown in figures \ref{subfig:2223_2a} and \ref{subfig:2223_2b}. 
The second one has only three edges bordering each fundamental tile, hence it is the simplest classical tiling encoding this ribbon tiling. 
Barycentric subdivision as shown in figure \ref{subfig:2223_2c} leads to the coloured D-symbol shown in figure~\ref{fig:colouredDsymbol}.

\begin{figure}[!tbp]
  \begin{subfigure}[t]{0.3\textwidth}
  \centering
    \includegraphics[width=\textwidth]{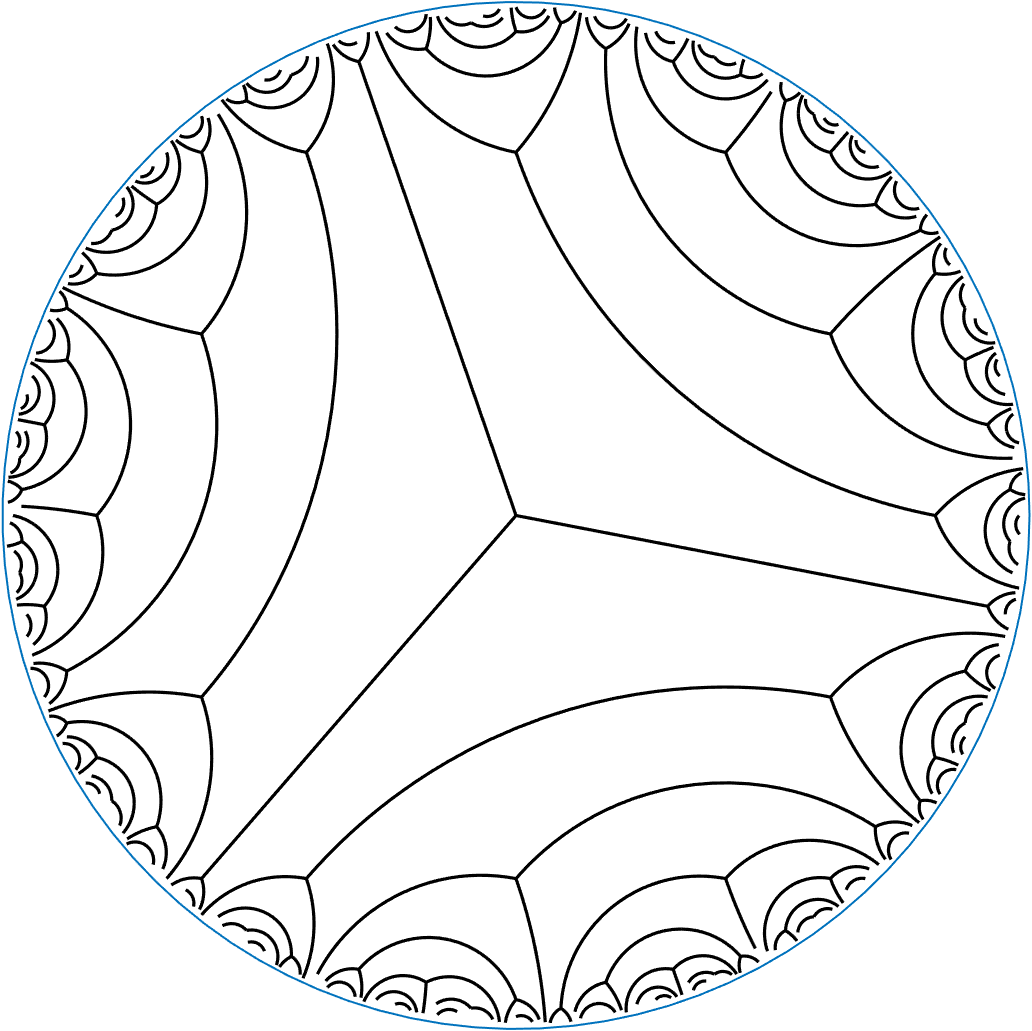}
    \caption{A tile-$1$-transitive ribbon tiling with symmetry group \orb{2223}.}\label{subfig:2223_1a}
  \end{subfigure}
  \hfill
  \begin{subfigure}[t]{0.3\textwidth}
  \centering
    \includegraphics[width=\textwidth]{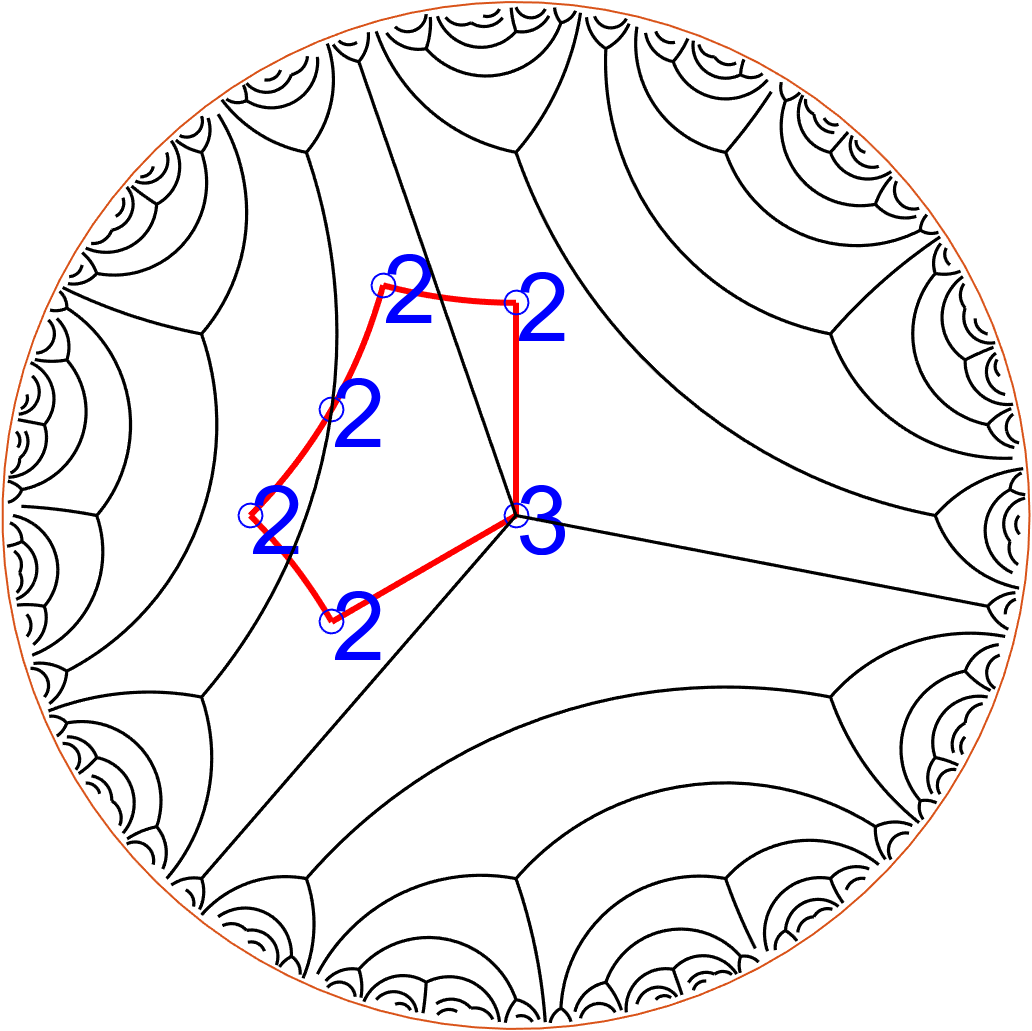}
  \caption{The ribbon tiling from (a) with a fundamental domain FD of the symmetry group. The points of increased symmetry on the boundary of FD are marked with their orders.}\label{subfig:2223_1b}
  \end{subfigure}
    \hfill
  \begin{subfigure}[t]{0.3\textwidth}
  \centering
    \includegraphics[width=\textwidth]{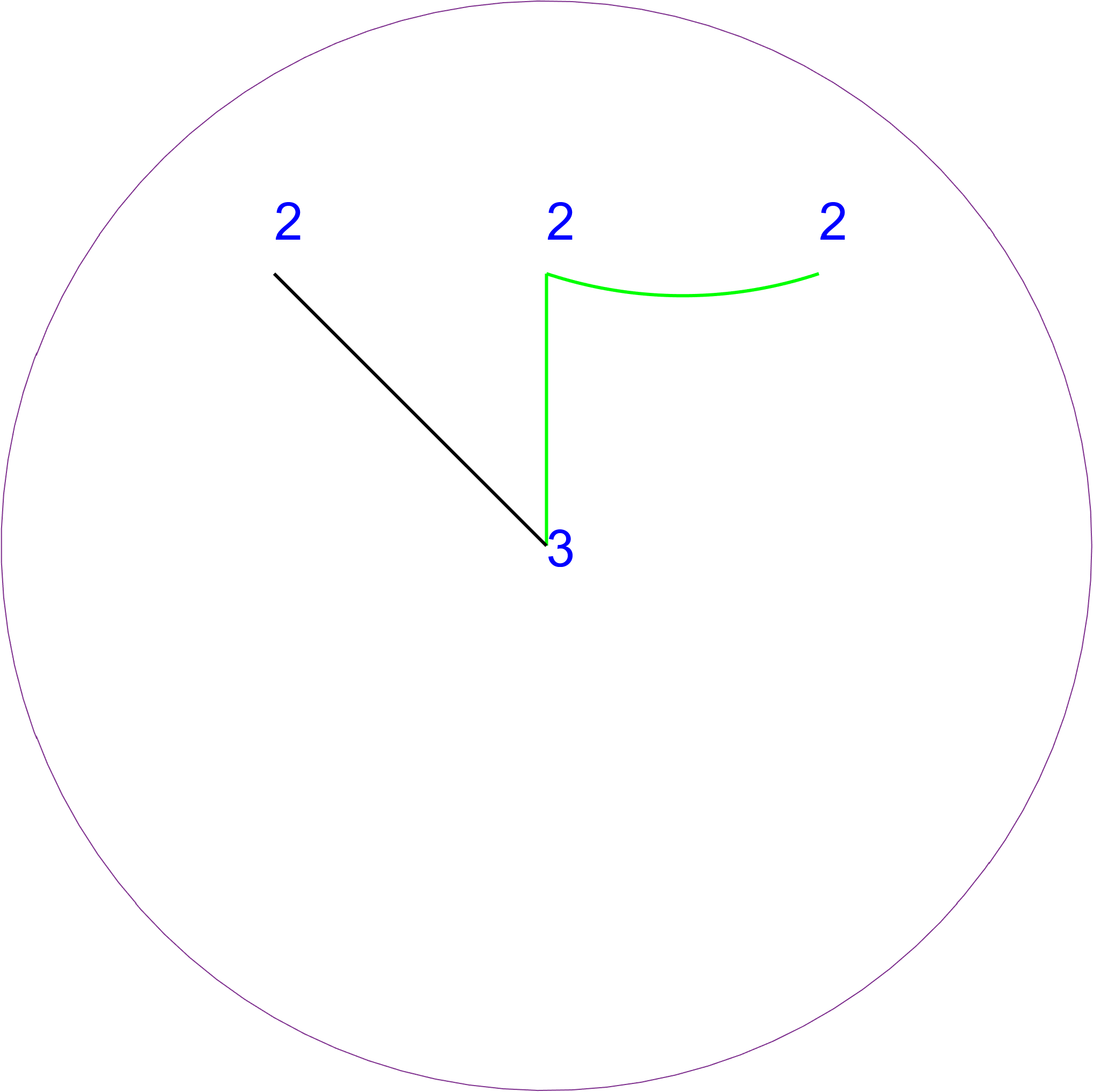}
  \caption{The orbifold is topologically a sphere with $4$ marked points. The green edges are introduced to form a tree.}\label{subfig:2223_1c}
  \end{subfigure}
\\
   \begin{subfigure}[t]{0.3\textwidth}
  \centering
    \includegraphics[width=\textwidth]{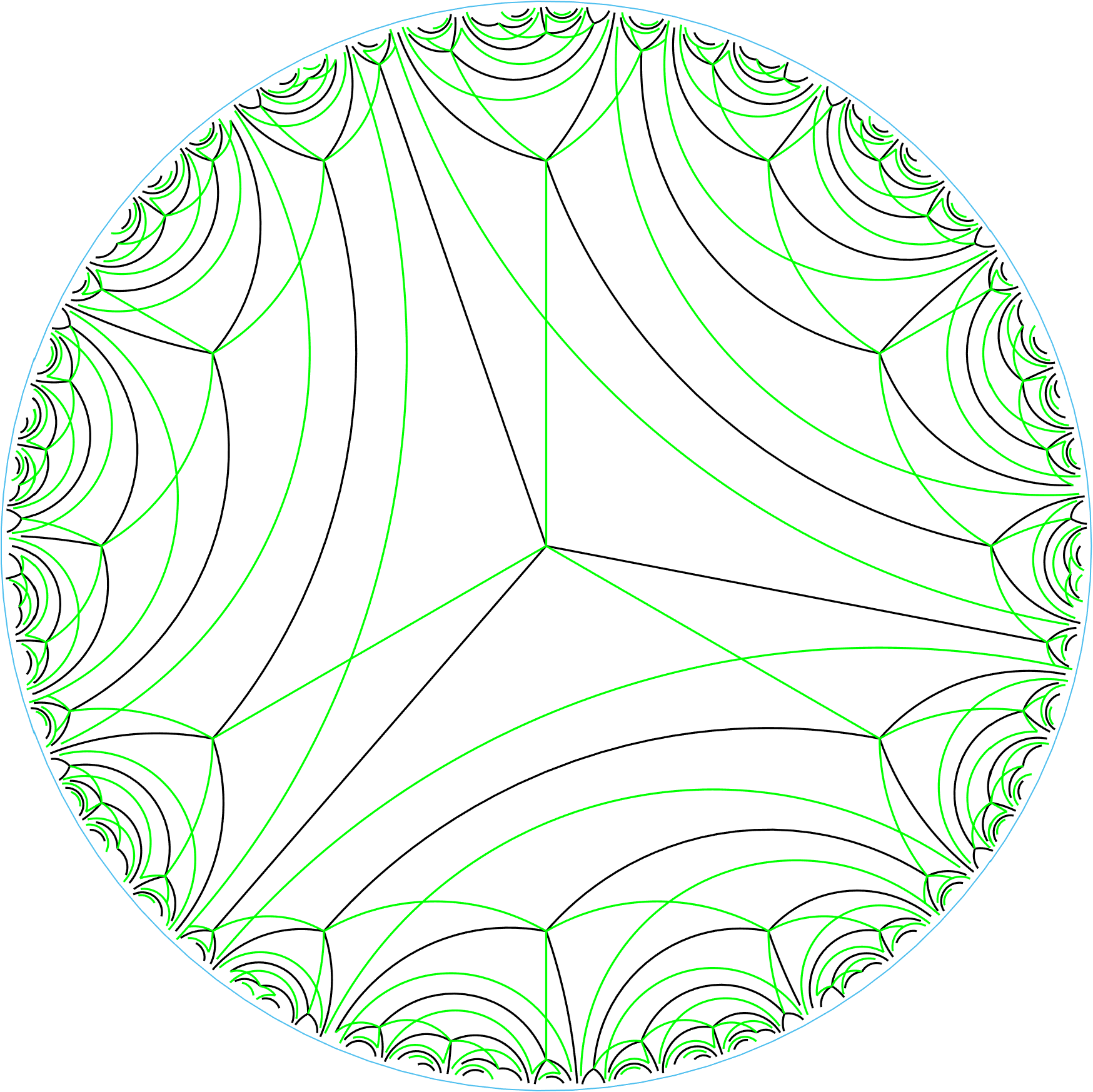}
  \caption{A classical tiling resulting from the insertion of the edges indicated in figure \ref{subfig:2223_1c}. Each fundamental tile is bordered by four edges.}\label{subfig:2223_2a}
  \end{subfigure}
  \hfill
  \begin{subfigure}[t]{0.3\textwidth}
  \centering
    \includegraphics[width=\textwidth]{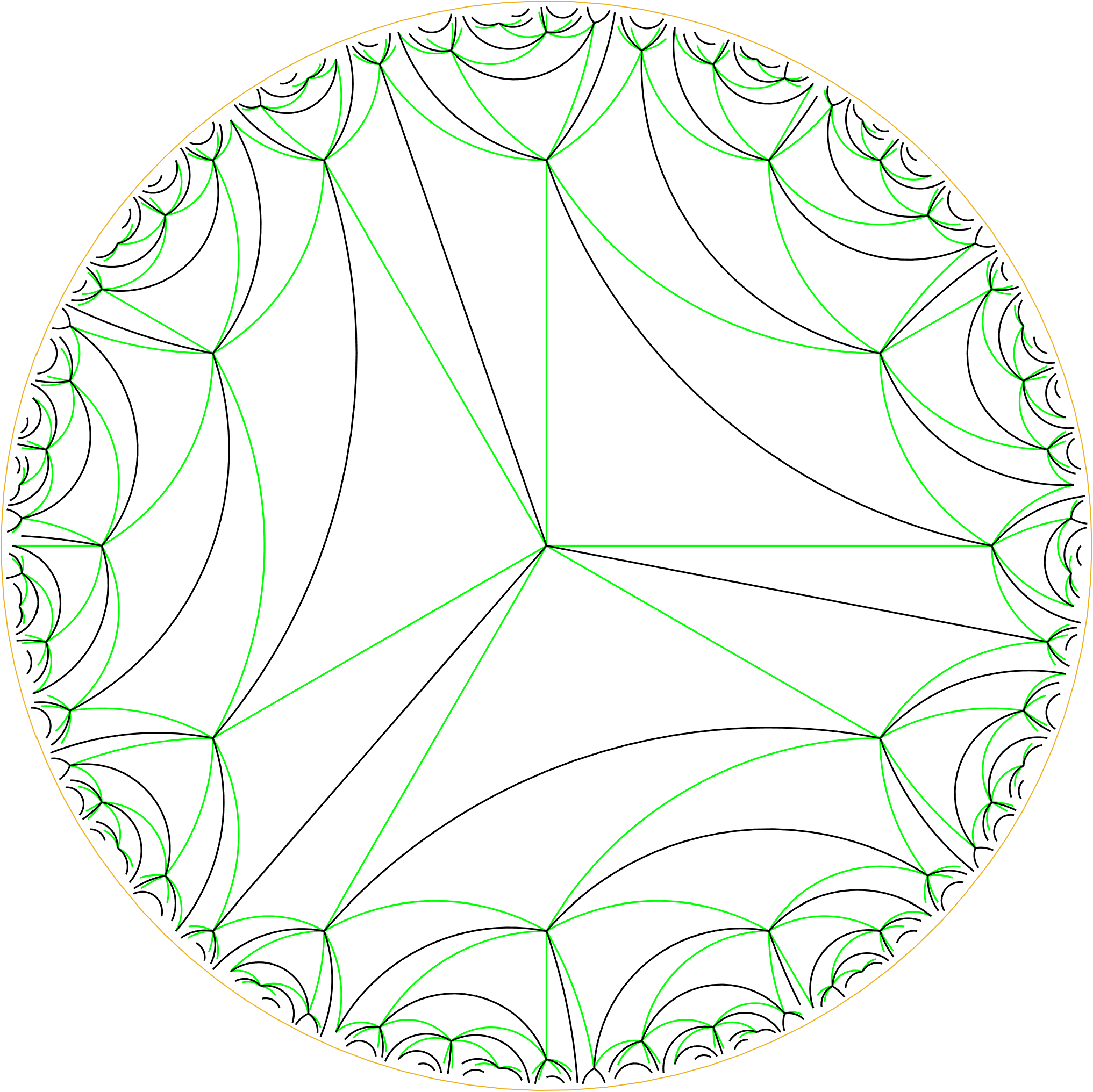}
  \caption{The simplest classical tiling, with fundamental tiles bordered by only three edges.}\label{subfig:2223_2b}
  \end{subfigure}
\hfill
  \begin{subfigure}[t]{0.3\textwidth}
  \centering
    \includegraphics[width=\textwidth]{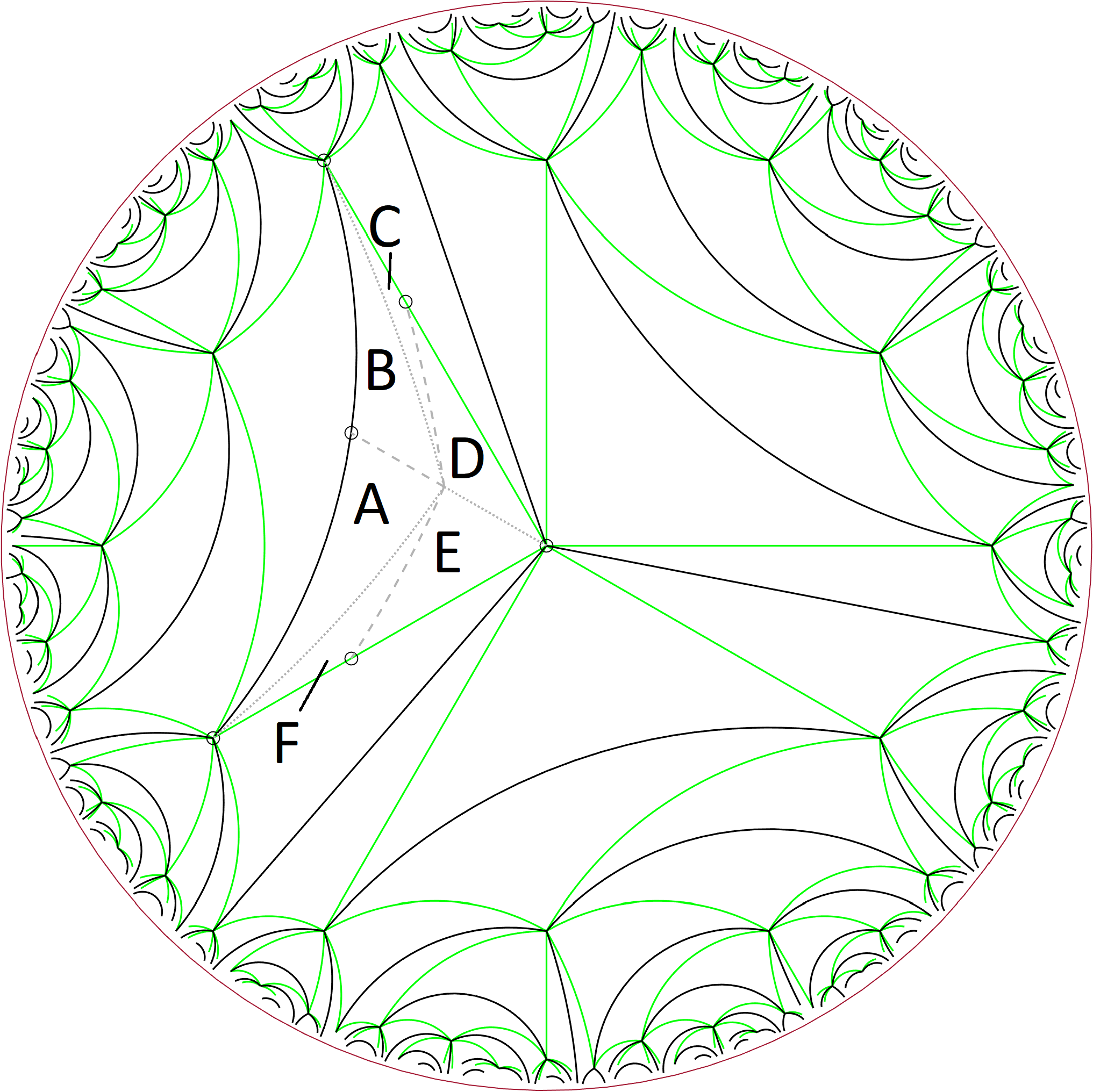}
  \caption{Barycentric subdivison of the simplest classical tiling leads to the coloured D-symbol for the ribbon tiling.}\label{subfig:2223_2c}
  \end{subfigure}
    \caption{Ribbon tiling with symmetry group \orb{2223}.}\label{fig:2223_1}
\end{figure}

\begin{figure}[!tbp]
 \centering
 \includegraphics[width=0.7 \textwidth]{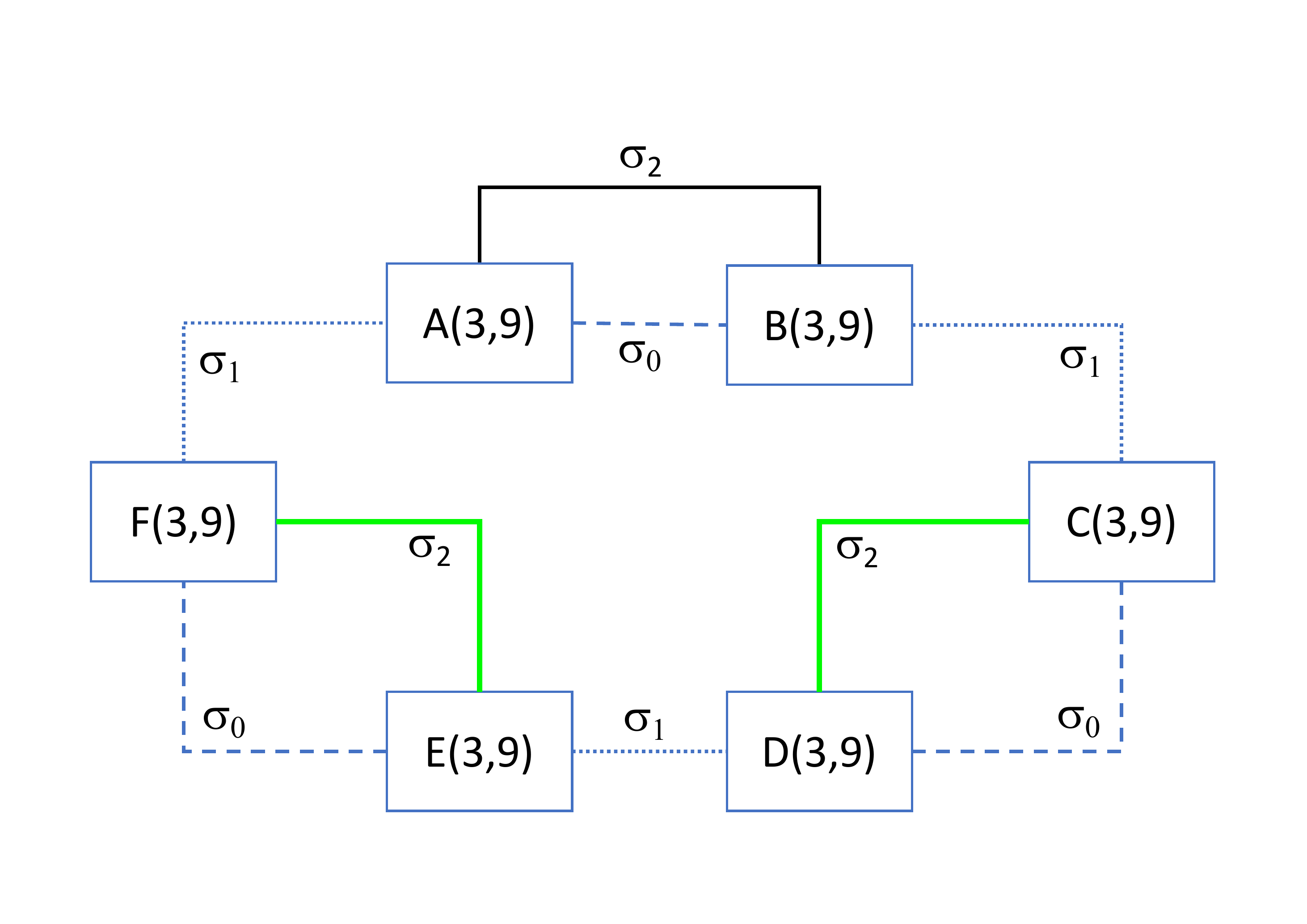}
 \caption{The coloured D-symbol for the ribbon tiling from figure \ref{fig:2223_1} with symmetry group \orb{2223}.
As in the classical setting, the D-symbol is a graph that records adjacencies between triangles (or chambers) using different line styles for adjacencies opposite a vertex (dashed), edge (dotted) and face (solid).  The two numbers associated with each chamber are face and vertex indices: $(f,v)$ where $(\sigma_0 \sigma_1)^{f} (A) = A$ and $(\sigma_1 \sigma_2)^v (A) = A$ are orbits in the plane $\mathcal{X}$ that return to the starting chamber, $A$.  Note that it is always the case that $(\sigma_0 \sigma_2)^2(A) = A$ as there are four chambers around the midpoint of an edge.  See~\cite{DRESS1987,Dresshu,Delgado-Friedrichstilings} for further details of standard D-symbols.  Our coloured D-symbol augments this graph by recording the black or green colour for the tile edges shared by $\sigma_2$-adjacent chambers. }\label{fig:colouredDsymbol}
\end{figure}

Now we will show how to enumerate the ribbon tilings with symmetry group \orb{2224}. 
There are nine combinatorially distinct fundamental domain tilings with this symmetry, shown in figure \ref{fig:2224funds}. More information about these tilings, coloured pictures, their D-symbols and their transitivity classes can be found in \cite{epinetsearch}.
\begin{figure}[!tbp]
  \begin{subfigure}[t]{0.3\textwidth}
  \centering
    \includegraphics[width=\textwidth]{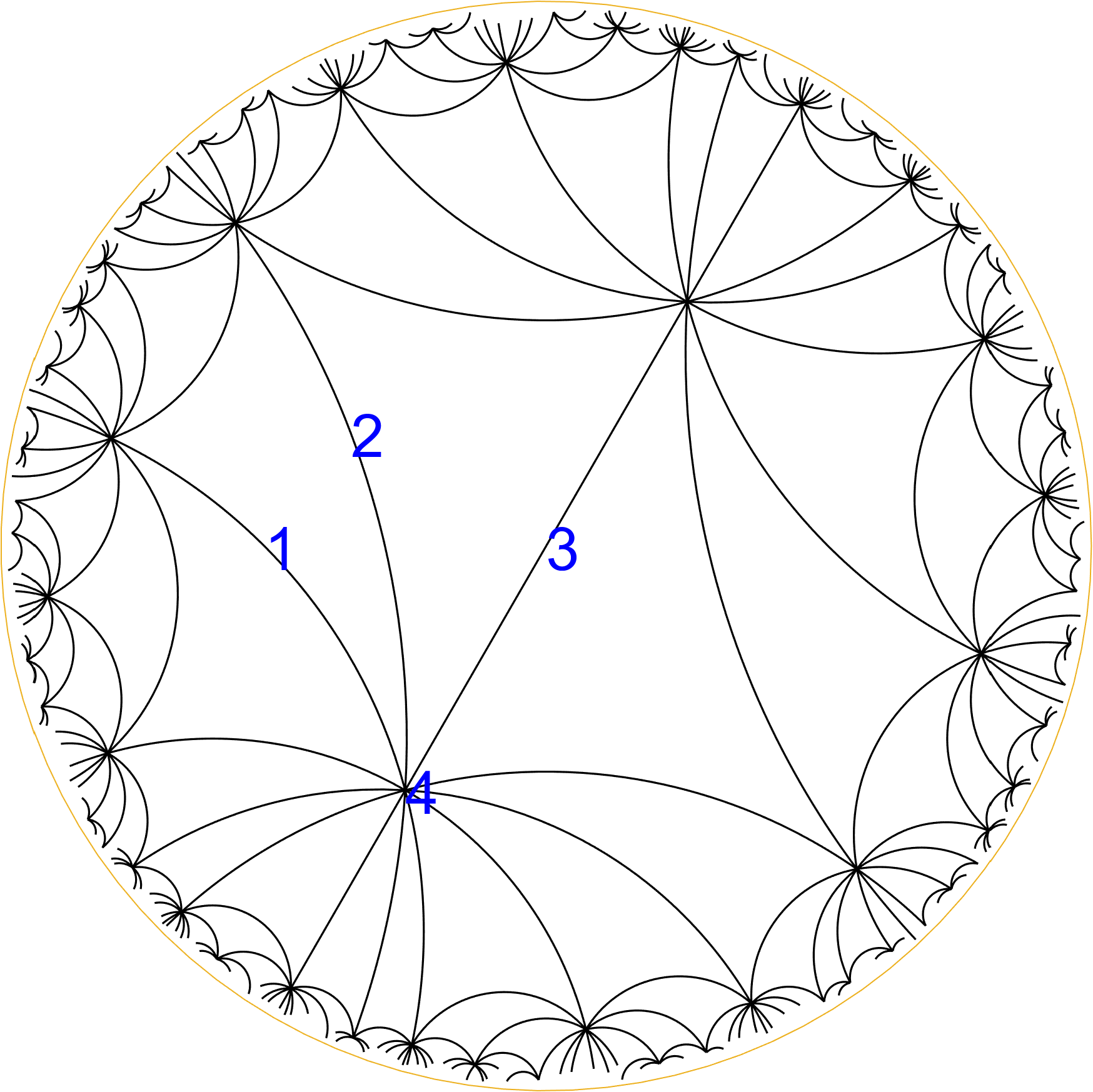}
    \caption{Corresponds to the tiling QS20.}\label{subfig:2224_a}
  \end{subfigure}
  \hfill
  \begin{subfigure}[t]{0.3\textwidth}
  \centering
    \includegraphics[width=\textwidth]{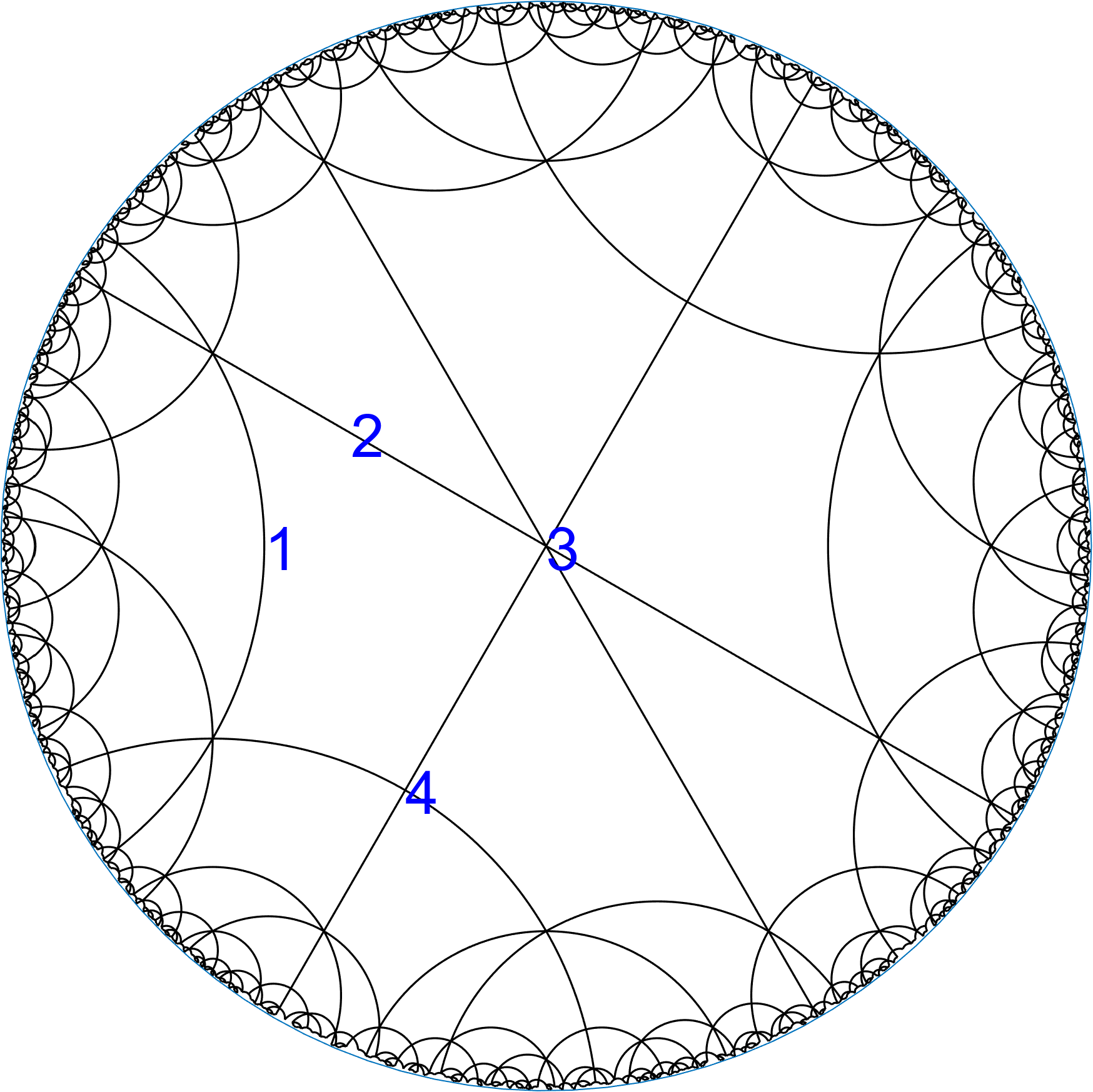}
    \caption{Corresponds to the tiling QS21.}\label{subfig:2224_b}
  \end{subfigure}
  \hfill
    \begin{subfigure}[t]{0.3\textwidth}
  \centering
    \includegraphics[width=\textwidth]{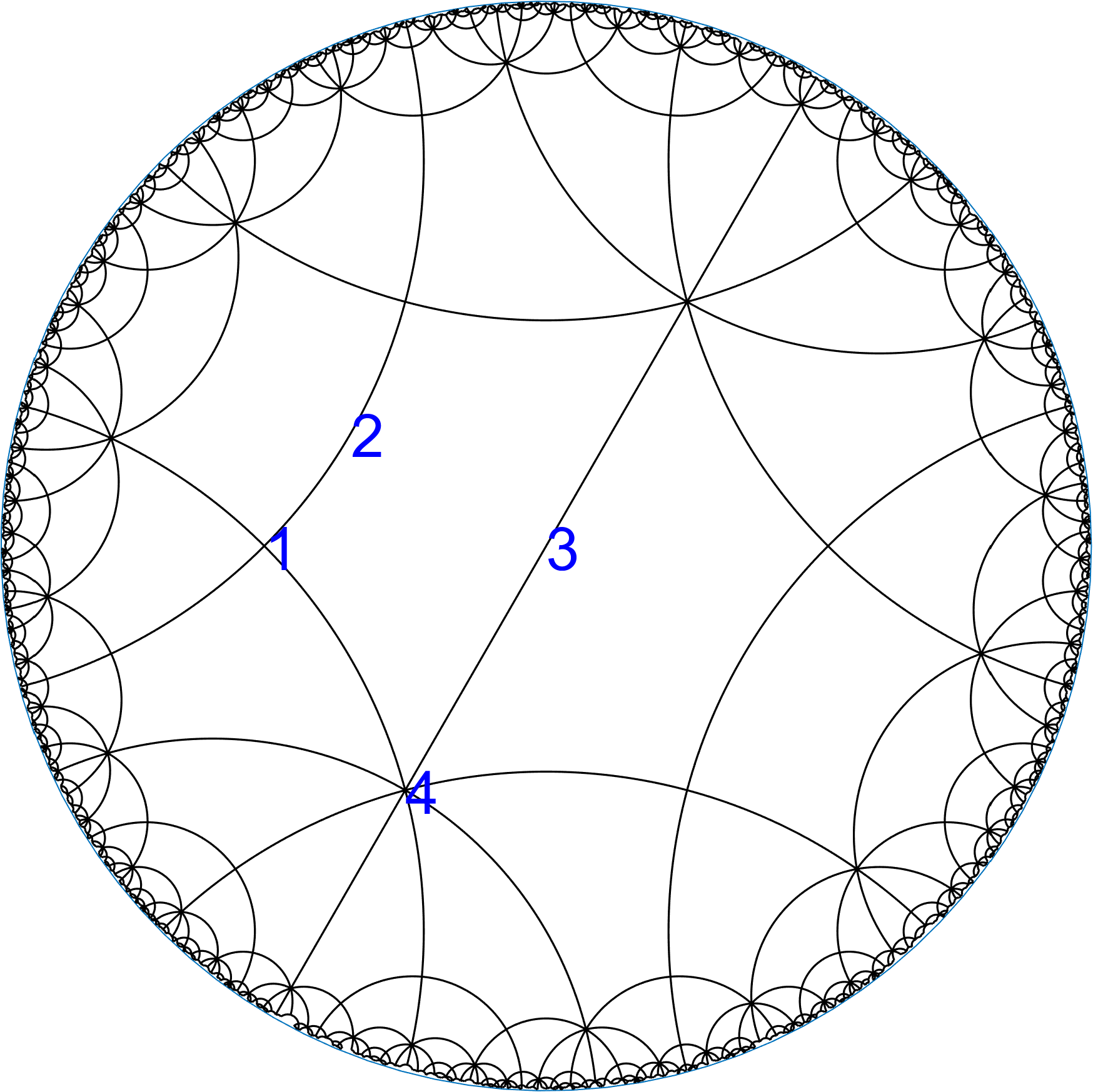}
    \caption{Corresponds to the tiling QS22.}\label{subfig:2224_c}
  \end{subfigure}
  \\
  \begin{subfigure}[t]{0.3\textwidth}
  \centering
    \includegraphics[width=\textwidth]{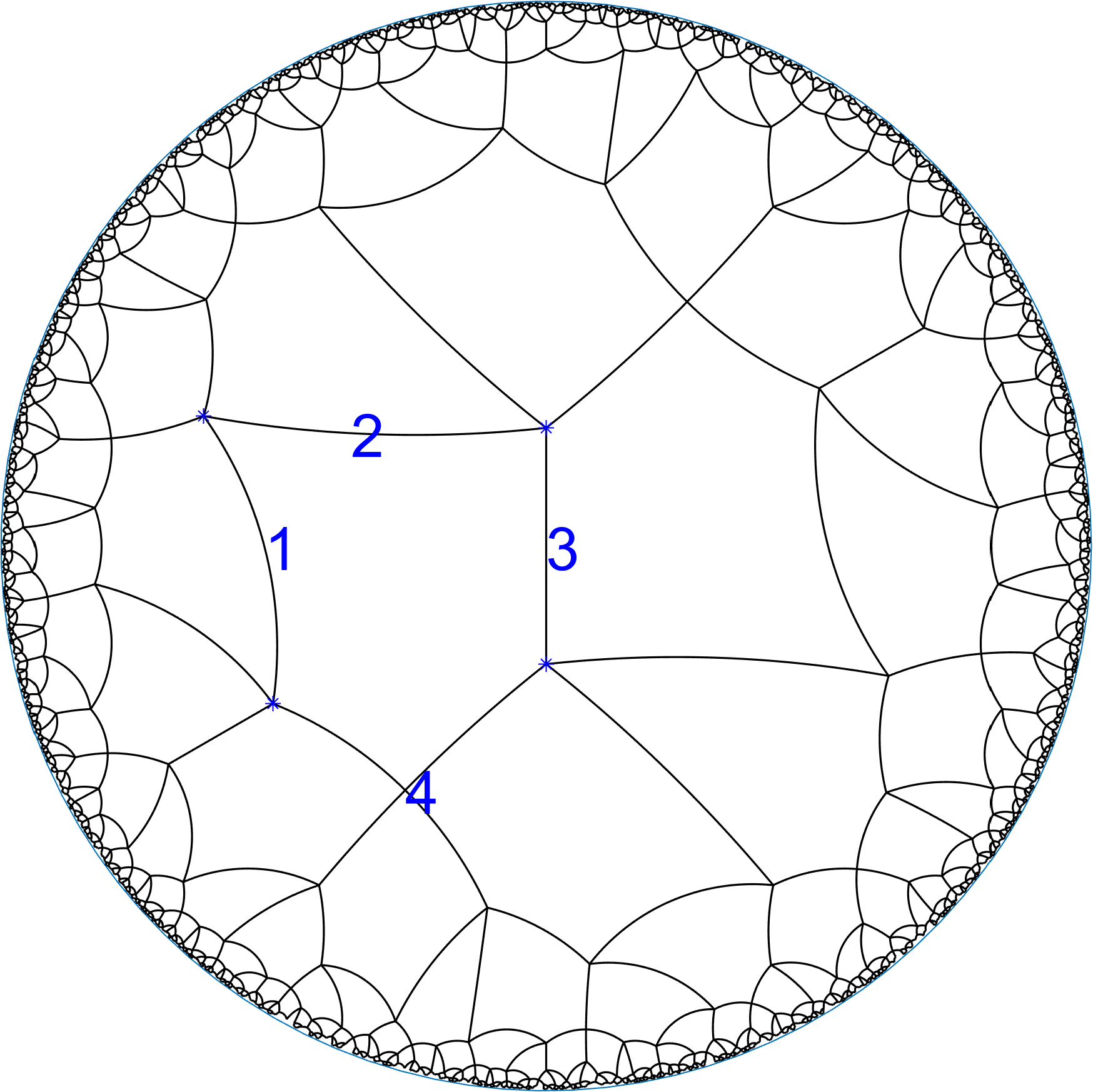}
    \caption{Corresponds to the tiling QS23.}\label{subfig:2224_d}
  \end{subfigure}
  \hfill
  \begin{subfigure}[t]{0.3\textwidth}
  \centering
    \includegraphics[width=\textwidth]{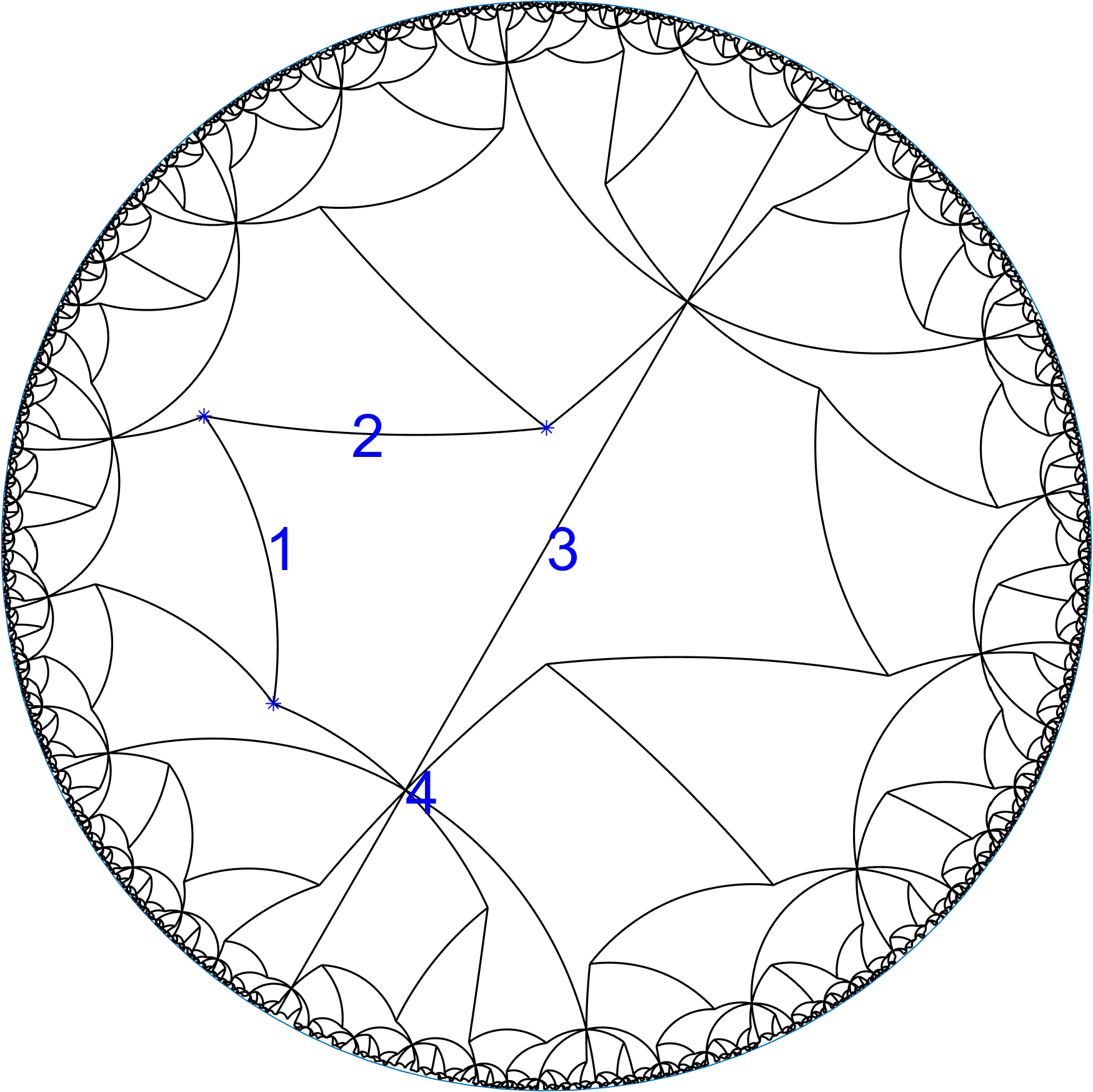}
    \caption{Corresponds to the tiling QS24.}\label{subfig:2224_e}
  \end{subfigure}
  \hfill
  \begin{subfigure}[t]{0.3\textwidth}
  \centering
    \includegraphics[width=\textwidth]{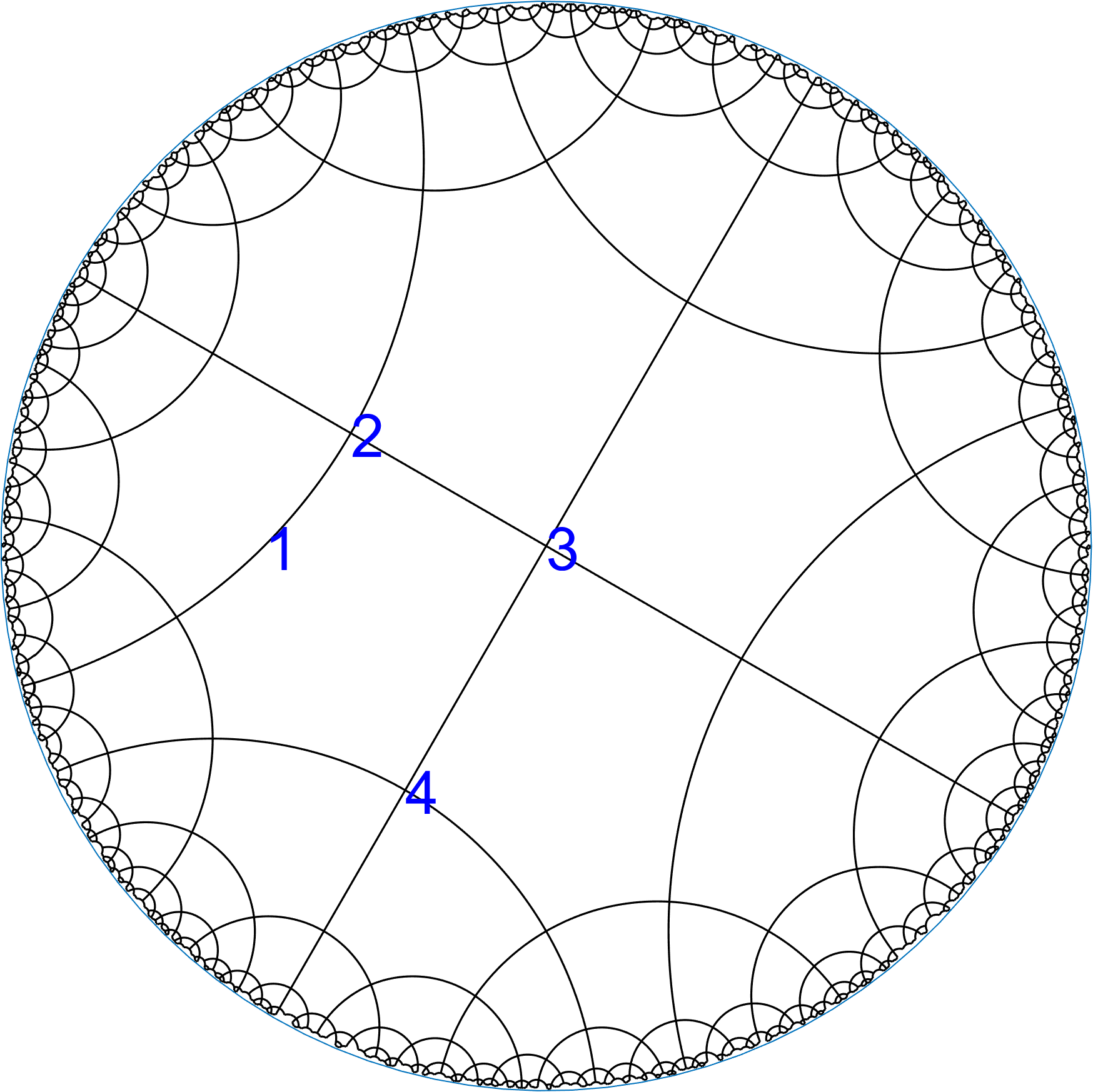}
    \caption{Corresponds to the tiling QS25.}\label{subfig:2224_f}
  \end{subfigure}
  \\
    \begin{subfigure}[t]{0.3\textwidth}
  \centering
    \includegraphics[width=\textwidth]{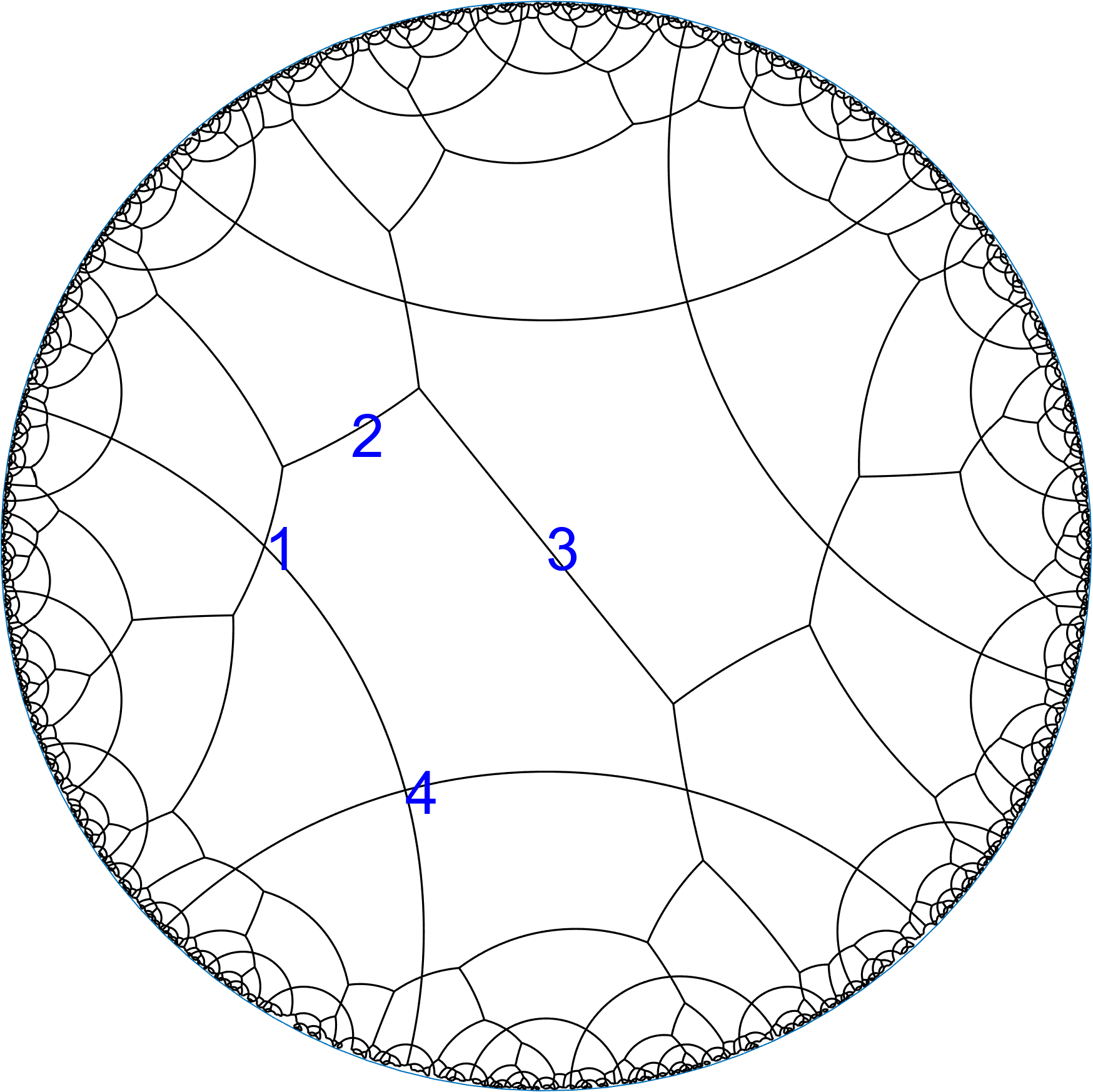}
    \caption{Corresponds to the tiling QS26.}\label{subfig:2224_g}
  \end{subfigure}
  \hfill
  \begin{subfigure}[t]{0.3\textwidth}
  \centering
    \includegraphics[width=\textwidth]{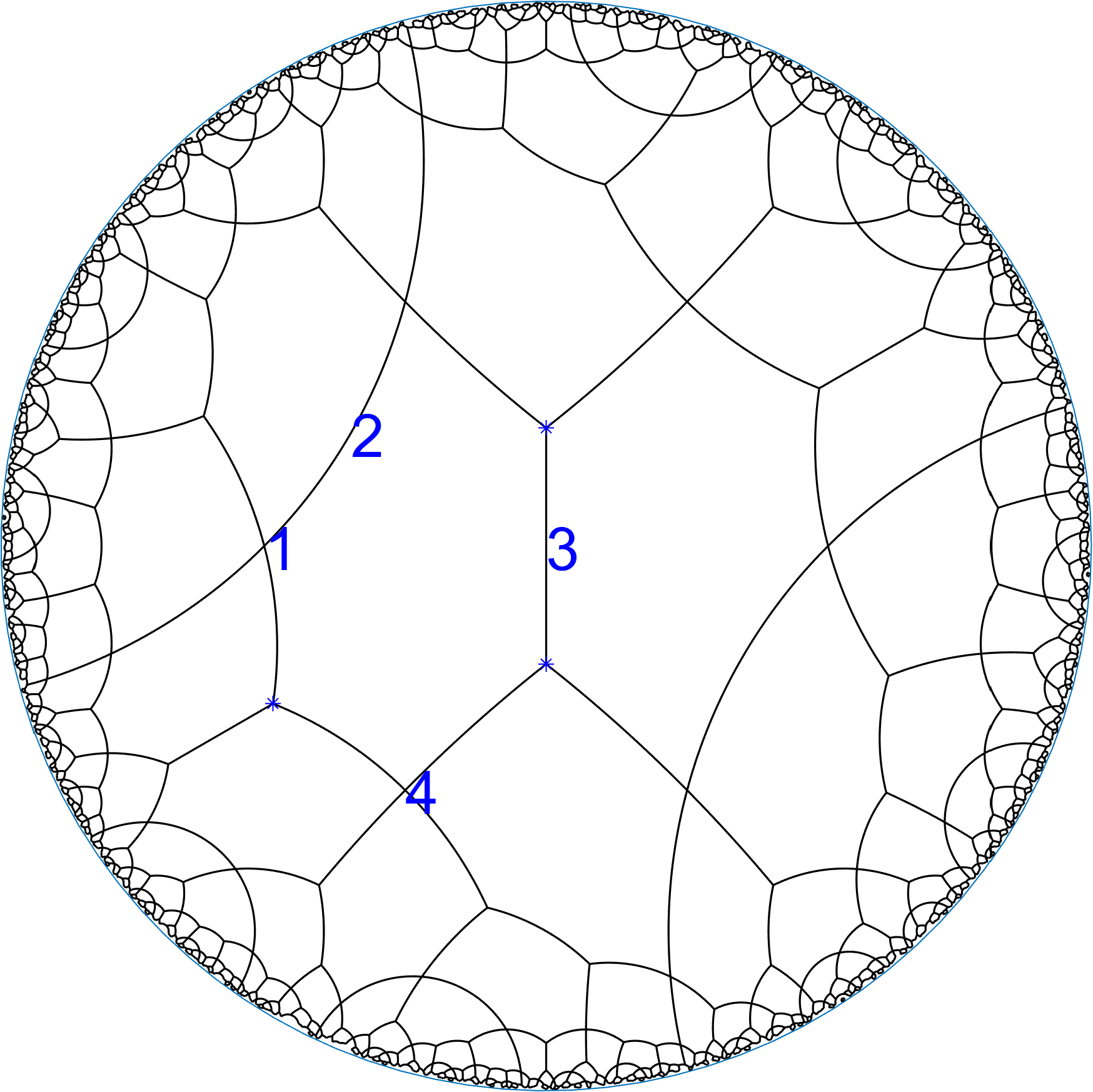}
    \caption{Corresponds to the tiling QS27.}\label{subfig:2224_h}
  \end{subfigure}
  \hfill
  \begin{subfigure}[t]{0.3\textwidth}
  \centering
    \includegraphics[width=\textwidth]{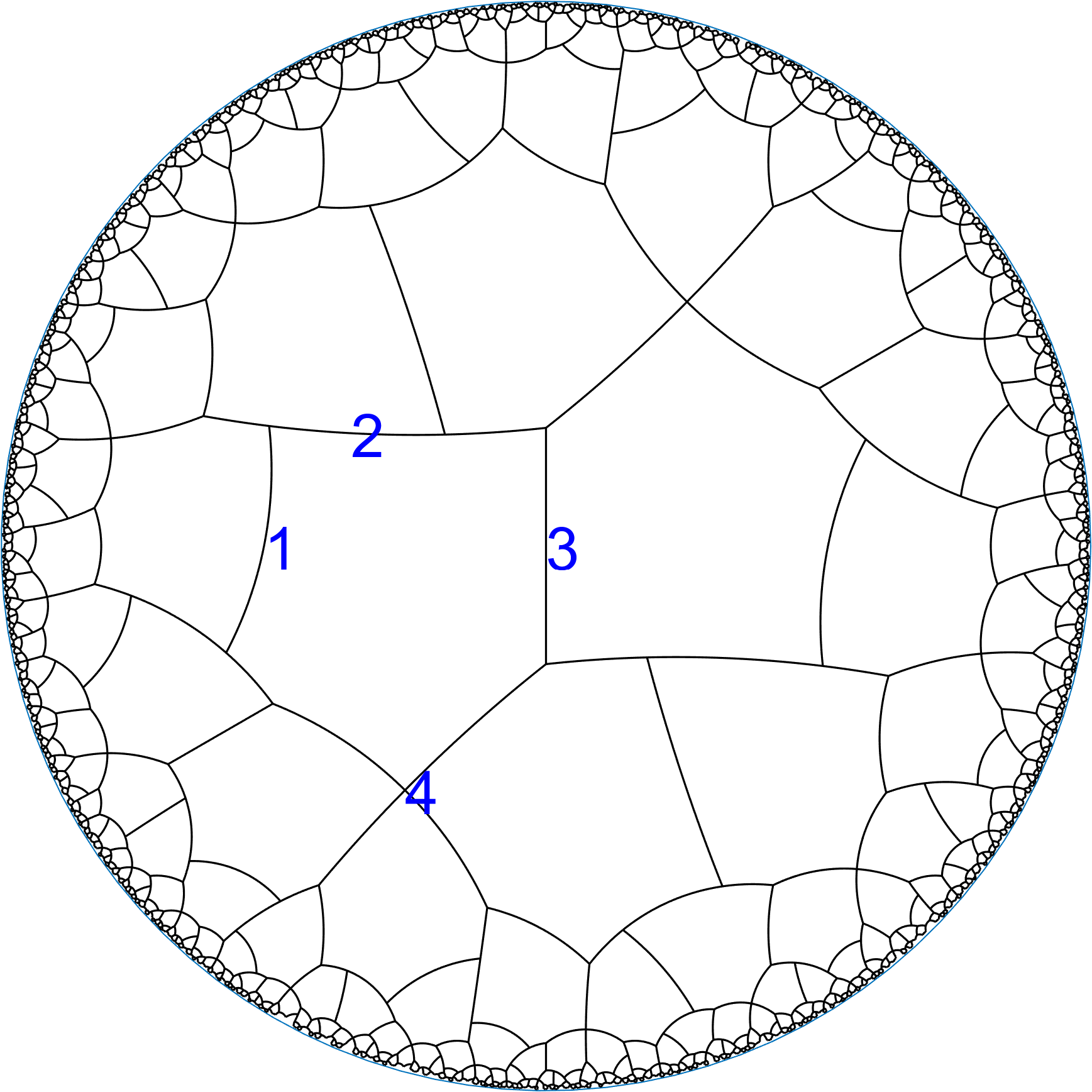}
    \caption{Corresponds to the tiling QS28.}\label{subfig:2224_i}
  \end{subfigure}
    \caption{The different fundamental tilings with symmetry group \orbcap{2224} generated by the indicated generators, with the $4$-fold rotation located at vertex $4$. The naming QS$n$ is that used in the epinet database \cite{epinetsearch}. Further information about the tilings including their D-symbols is accessible at \textsf{epinet.anu.edu.au/QS}$n$. }\label{fig:2224funds}
\end{figure}
Classical non-fundamental tilings are built from these by deleting from the tile-boundary graph $C \subset \mathcal{O}$, a single edge incident at a cone point with degree-1.  This process results in five combinatorially distinct tilings, each with stabiliser group either $\orb{2\bullet} = C_2$ or $\orb{4\bullet} = C_4$.  

When constructing the ribbon tilings, first note the following observations that are a consequence of our results in the previous sections. 
In order to produce a ribbon tile, we need to delete at least two edges from a fundamental domain. 
Furthermore, deleting edges that are non-adjacent in the tile boundary must result in a ribbon tile. 
If the two edges are related by a symmetry, then this symmetry must shift all points in $\mathbb{H}^2$ by some length bounded from below, i.e. the symmetry generates a translation or glide. 
If the two edges are not related by a symmetry then the subgroup generated by the corresponding symmetry operations must be non-abelian, and we are in the situation of Lemma \ref{lemma:infstab}. 

For example, notice that the tiling in figure \ref{subfig:2224_i} can be obtained from the tiling in figure \ref{subfig:2224_d} by splitting the degree-4 vertex between the cone points labelled `2' and `3' into two degree-3 vertices by introducing a new edge.  Observe that the new edge does not intersect any cone points, that it connects two distinct vertices in the orbifold, and is not adjacent to a copy of itself.  Therefore, deleting this new edge must result in a ribbon tile with stabilizer group \orb{\infty\infty}. 
Here, while figure \ref{subfig:2224_i} supports the ribbon tiling in figure \ref{subfig:2224_branchedribbon_a}, the tiling from figure \ref{subfig:2224_d} does not.  

From the nine fundamental domain tilings for \orb{2224}, just three combinatorially distinct ribbon tilings are possible; these are shown in figure~\ref{fig:2224_ribbons}.  
The example in figure~\ref{subfig:2224_ribbon_a} can be generated by deleting suitable subsets of edges from six fundamental domains, namely those of figures \ref{subfig:2224_c} and \ref{subfig:2224_e}--\ref{subfig:2224_i}.
The one in figure~\ref{subfig:2224_ribbon_b} can be built from each of the nine fundamental domain tilings with multiple distinct edge deletions from some domains giving the same tile-class.  
The branched ribbon tiling in figure~\ref{subfig:2224_ribbon_c} can be found in eight of the fundamental domain tilings: the minimal triangular fundamental domain outlined in figure~\ref{subfig:2224_a} cannot support this branched ribbon. 
Figure \ref{fig:2224_ribbonsex2} illustrates that this fundamental domain supports only one type of ribbon tile, the one with stabiliser \orb{22\infty} shown in figure~\ref{subfig:2224_ribbon_b}.   

Unlike the Euclidean \orb{22\star} example given in the previous section, all three classes of ribbon tilings in figure \ref{fig:2224_ribbons} can be found within a single fundamental domain tiling. 
For example, the fundamental domains in figures \ref{subfig:2224_c} and \ref{subfig:2224_f}  support all three ribbon tilings. 

\begin{figure}[!tbp]
  \begin{subfigure}[t]{0.3\textwidth}
  \centering
    \includegraphics[width=\textwidth]{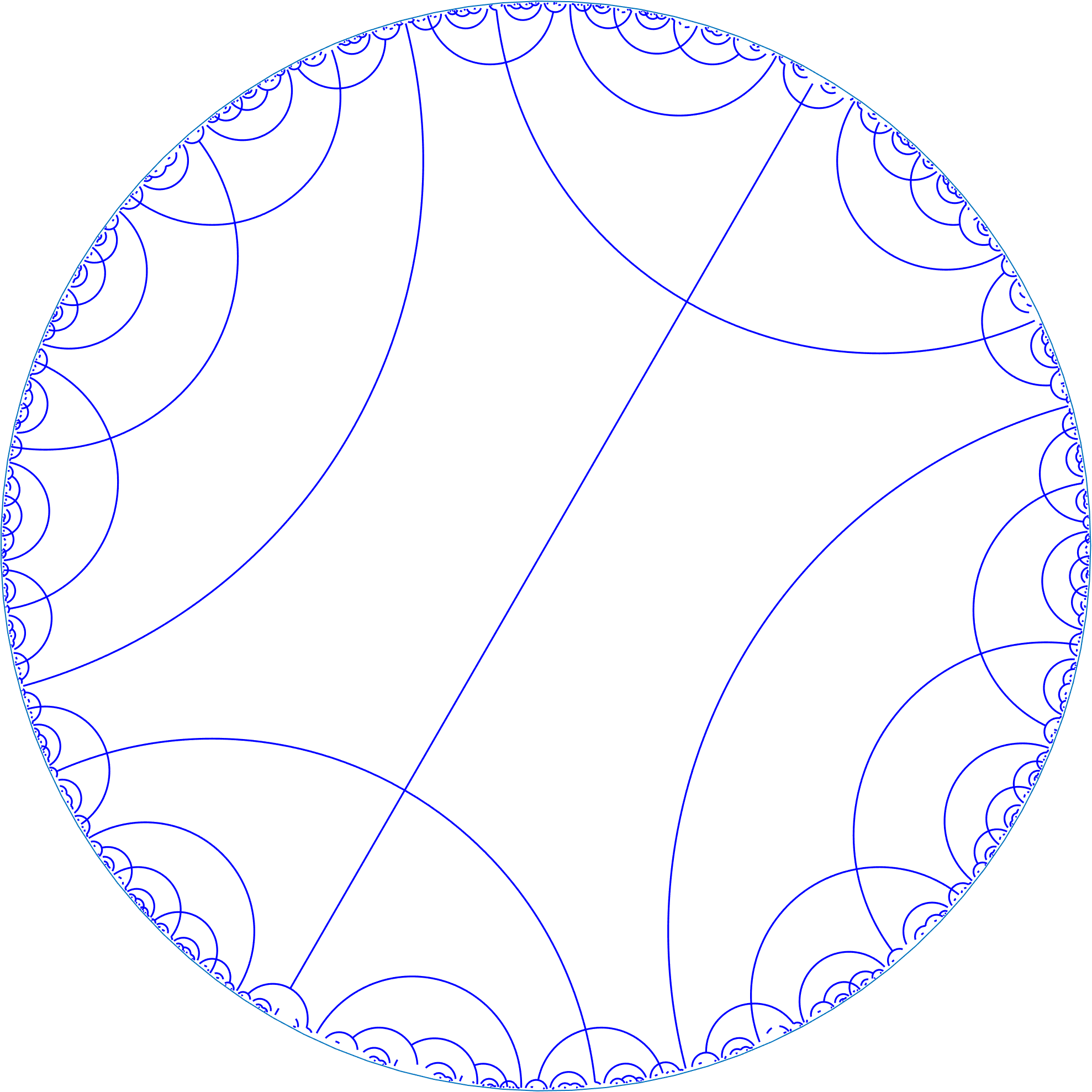}
    \caption{Ribbon tiling with stabiliser group \orbcap{\infty\infty}. }\label{subfig:2224_ribbon_a}
  \end{subfigure}
\hfill
  \begin{subfigure}[t]{0.3\textwidth}
  \centering
    \includegraphics[width=\textwidth]{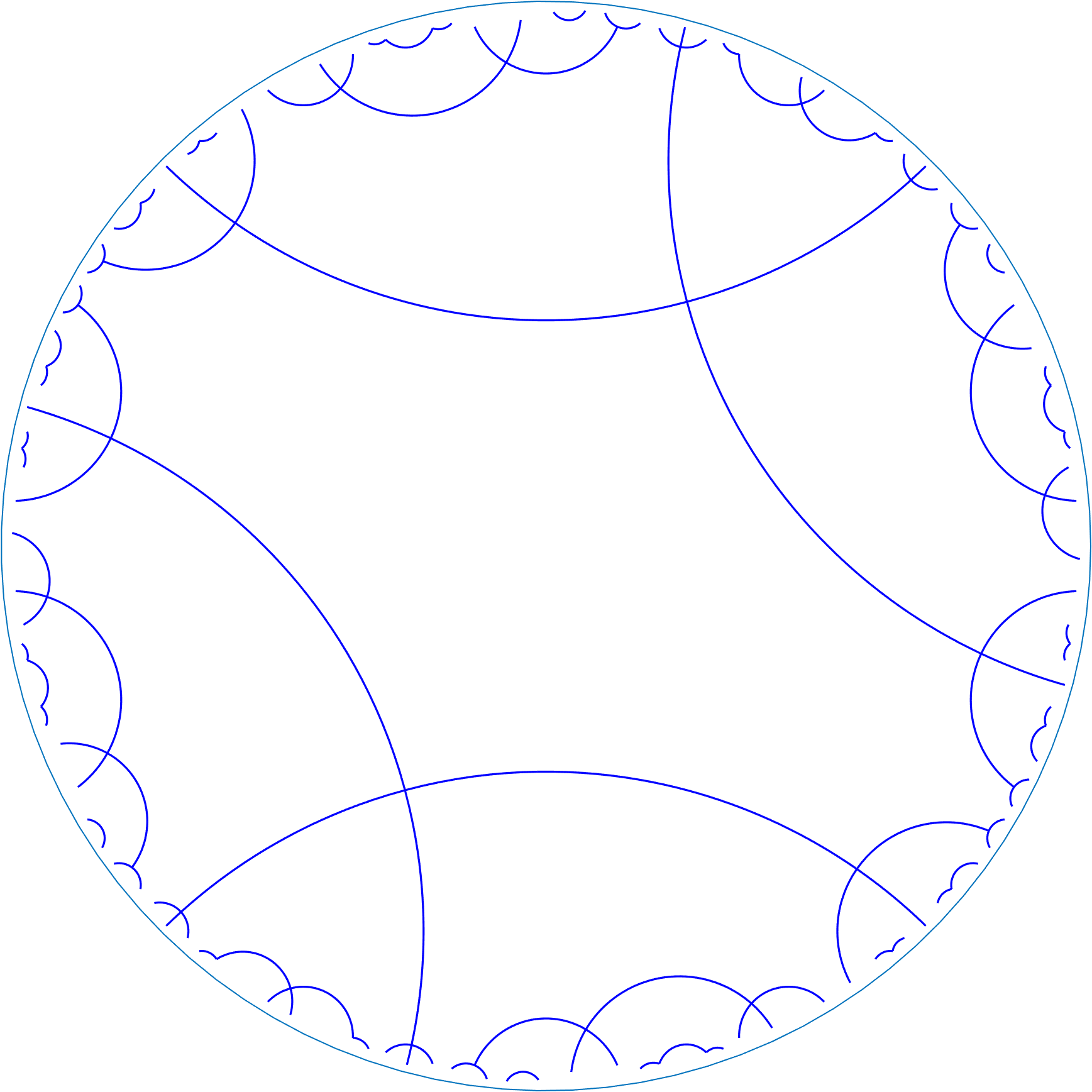}
  \caption{Ribbon tiling with stabiliser group \orbcap{2 2 \infty}.  } \label{subfig:2224_ribbon_b}
  \end{subfigure}
    \hfill
  \begin{subfigure}[t]{0.3\textwidth}
  \centering
    \includegraphics[width=\textwidth]{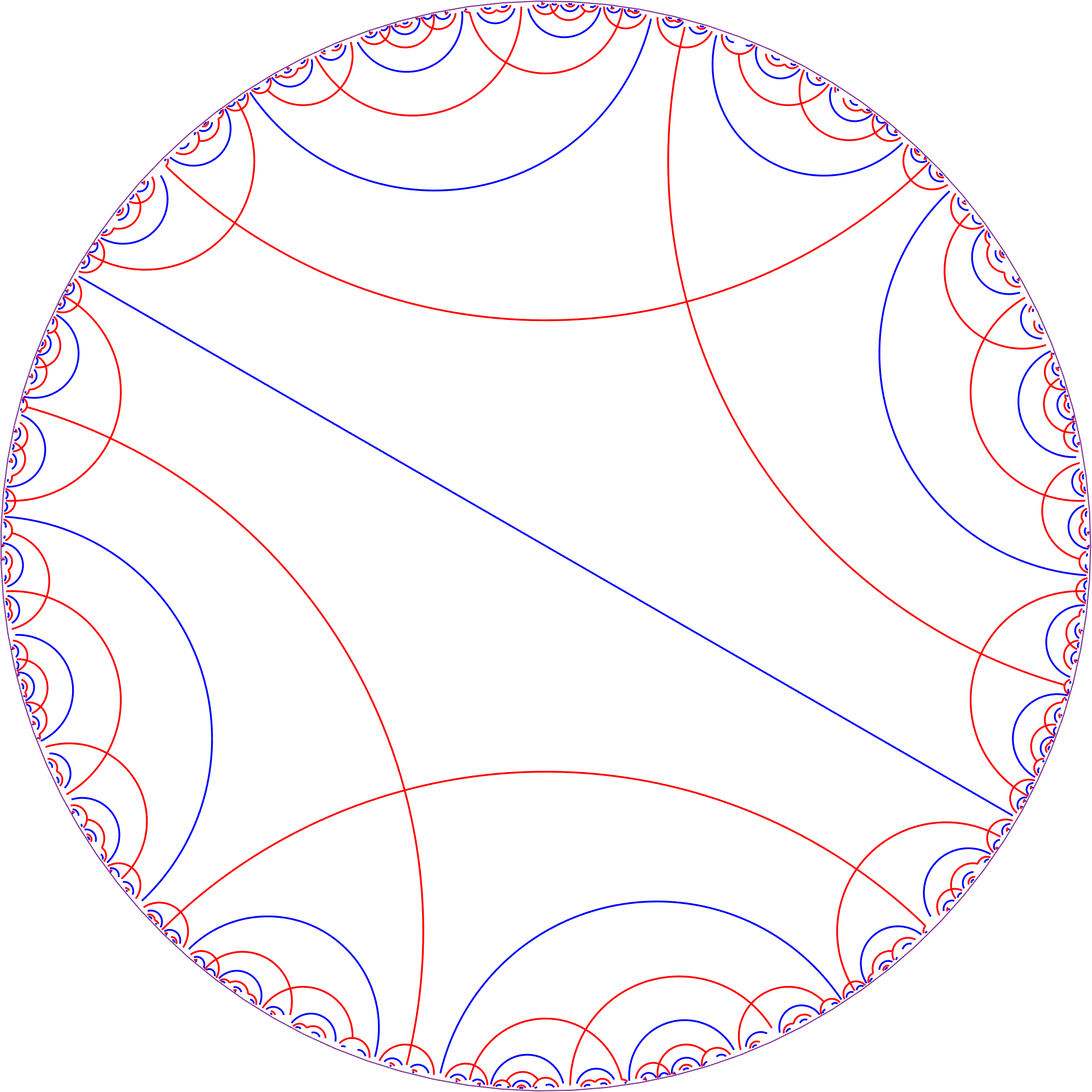}
  \caption{Branched ribbon tiling with stabiliser group \orbcap{2 4 \infty}.  The medial axis is drawn in red to show the branching structure. }\label{subfig:2224_ribbon_c}
  \end{subfigure}
  \caption{The three distinct classes of ribbon tilings with symmetry group \orb{2224}. }\label{fig:2224_ribbons}
\end{figure}

\begin{figure}[!tbp]
  \begin{subfigure}[t]{0.3\textwidth}
  \centering
    \includegraphics[width=\textwidth]{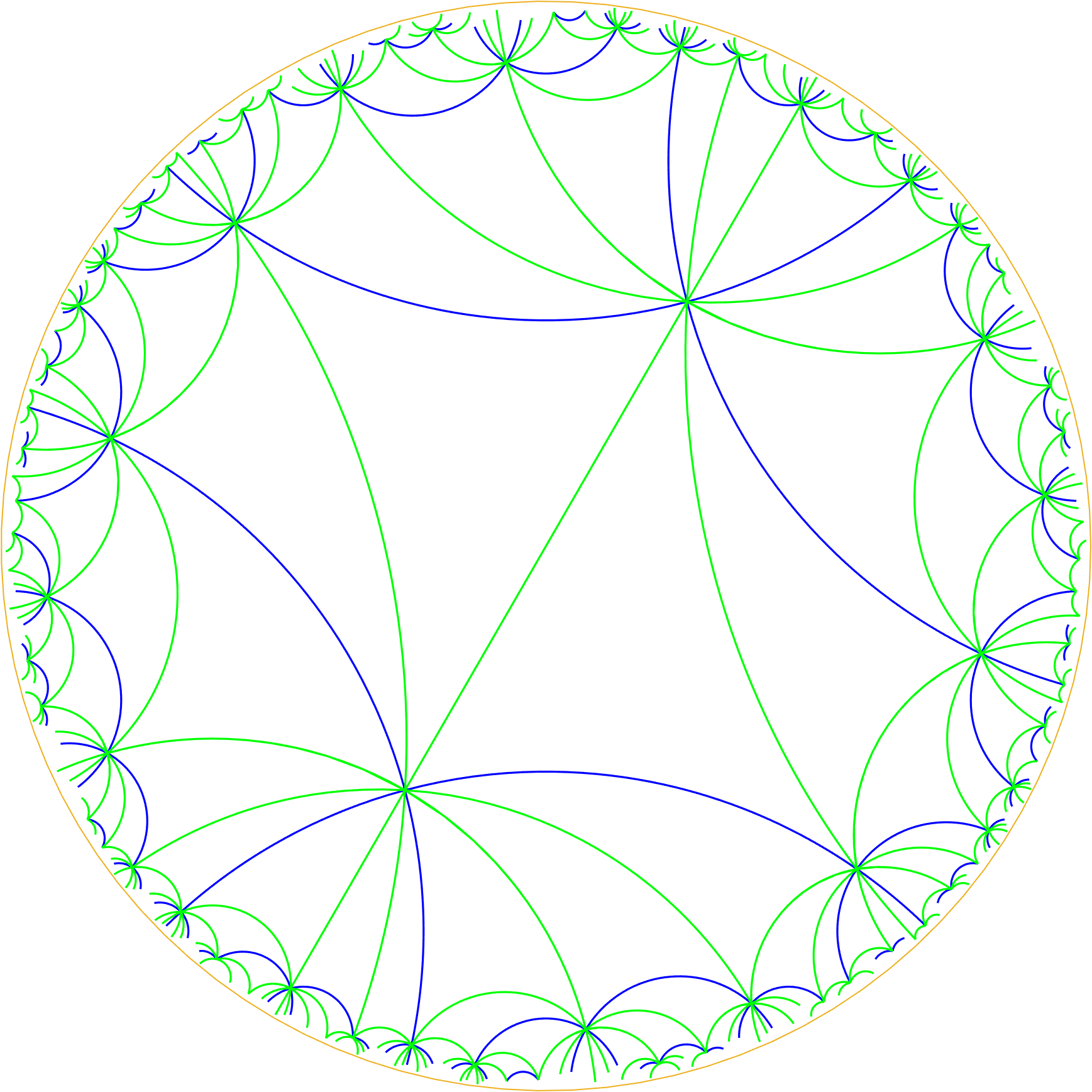}
    \caption{}\label{subfig:2224_simplefd_a}
  \end{subfigure}
\hfill
  \begin{subfigure}[t]{0.3\textwidth}
  \centering
 \includegraphics[width=\textwidth]{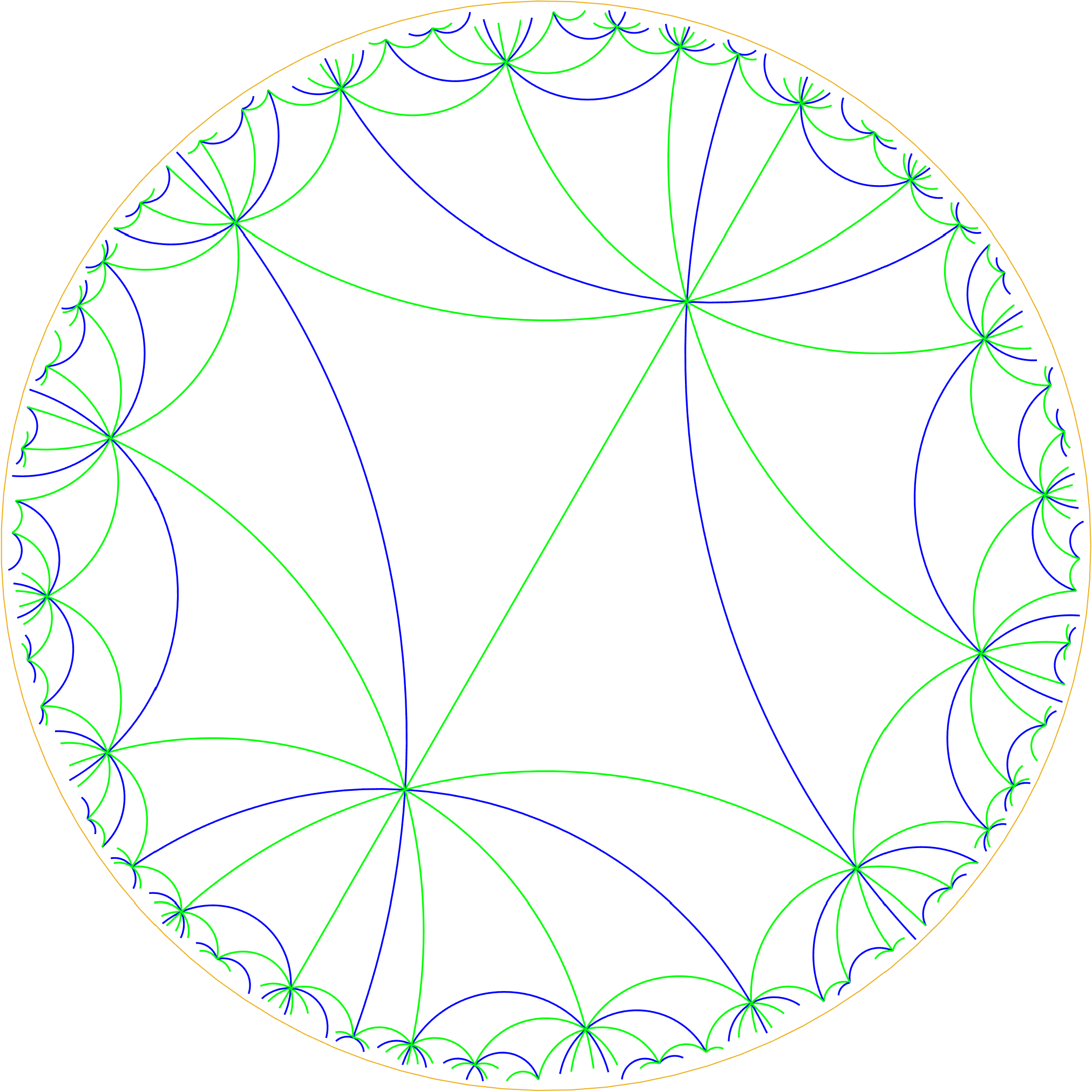}
    \caption{}\label{subfig:2224_simplefd_b}
  \end{subfigure}
  \hfill
  \begin{subfigure}[t]{0.3\textwidth}
  \centering
 \includegraphics[width=\textwidth]{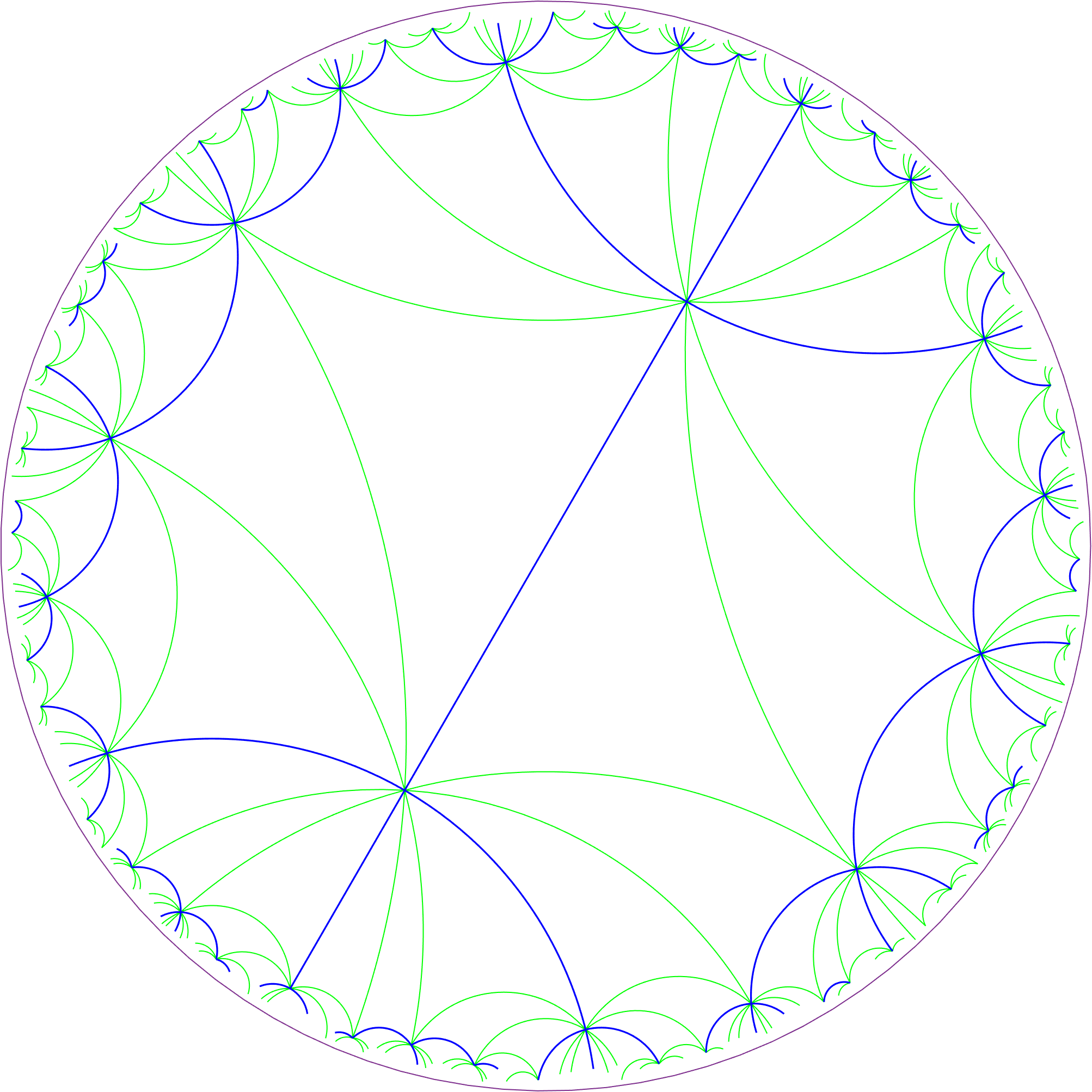}
    \caption{}\label{subfig:2224_simplefd_c}
  \end{subfigure}
  \caption{Deleting any pair of edges from the triangular fundamental domain of figure \ref{subfig:2224_a} leads to a ribbon tiling with stabiliser \orbcap{22\infty}.  Each of these three tilings is equivalent to that in figure \ref{subfig:2224_ribbon_b}}\label{fig:2224_ribbonsex2}
\end{figure}

 Finally, we provide a few examples of tile-$2$-transitive ribbon tilings.  Figure \ref{fig:2224splits} shows two examples of tile split operations that each add a new edge to the fundamental domain of figure \ref{subfig:2224_f}.
The tiling in figure \ref{subfig:2224split3} shows an example of a non-classical tiling that with a finite stabilizer group, $\orb{4\bullet}=C_4$.  Deleting the green edges in this case results in a tile with the topology of an annulus.

\begin{figure}[!tbp]
  \begin{subfigure}[t]{0.3\textwidth}
  \centering
    \includegraphics[width=\textwidth]{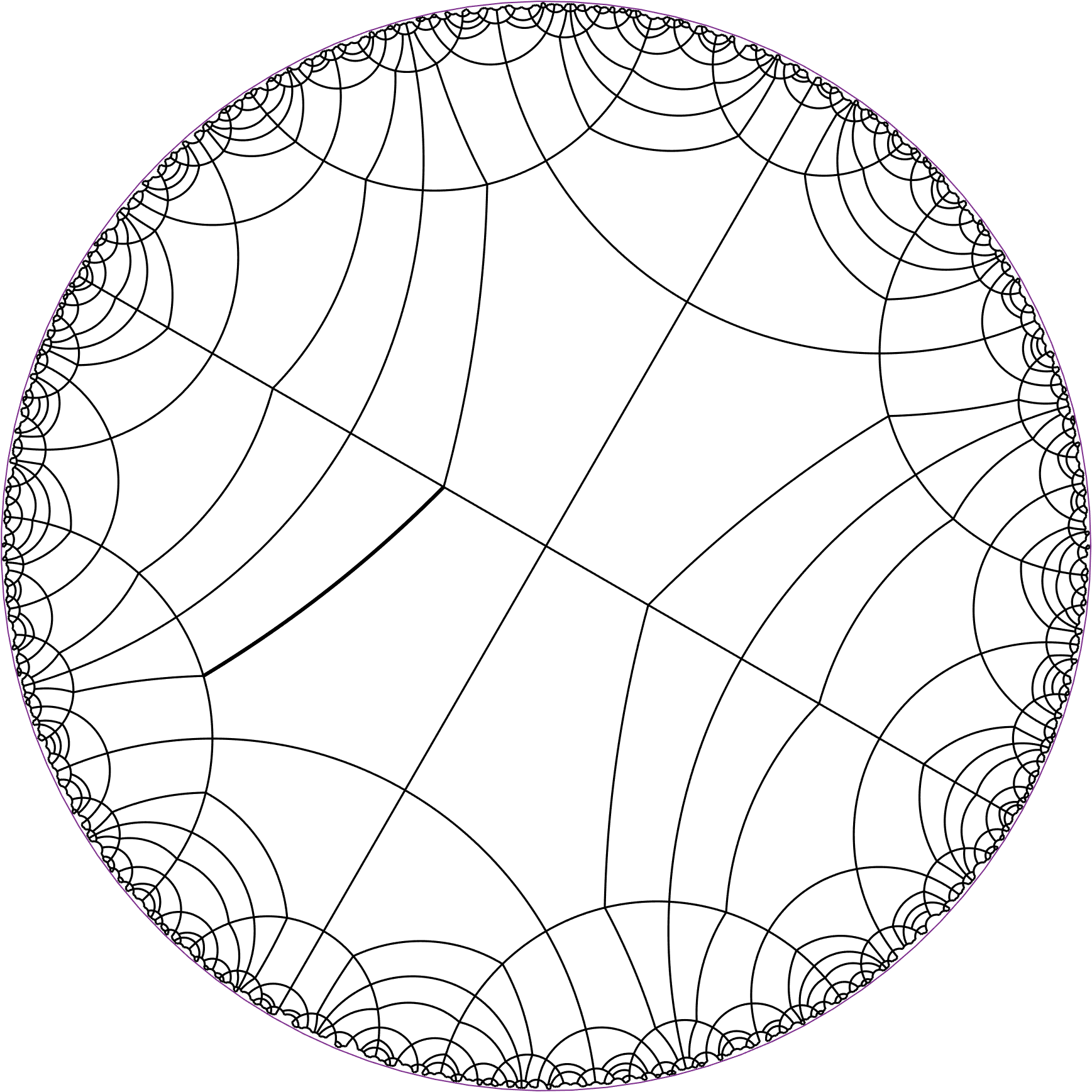}
    \caption{Tile-$2$-transitive tiling with symmetry group \orbcap{2224} obtained by splitting the tiling in figure \ref{subfig:2224_f}.}\label{subfig:2224split1}
  \end{subfigure}
\hfill
  \begin{subfigure}[t]{0.3\textwidth}
  \centering
    \includegraphics[width=\textwidth]{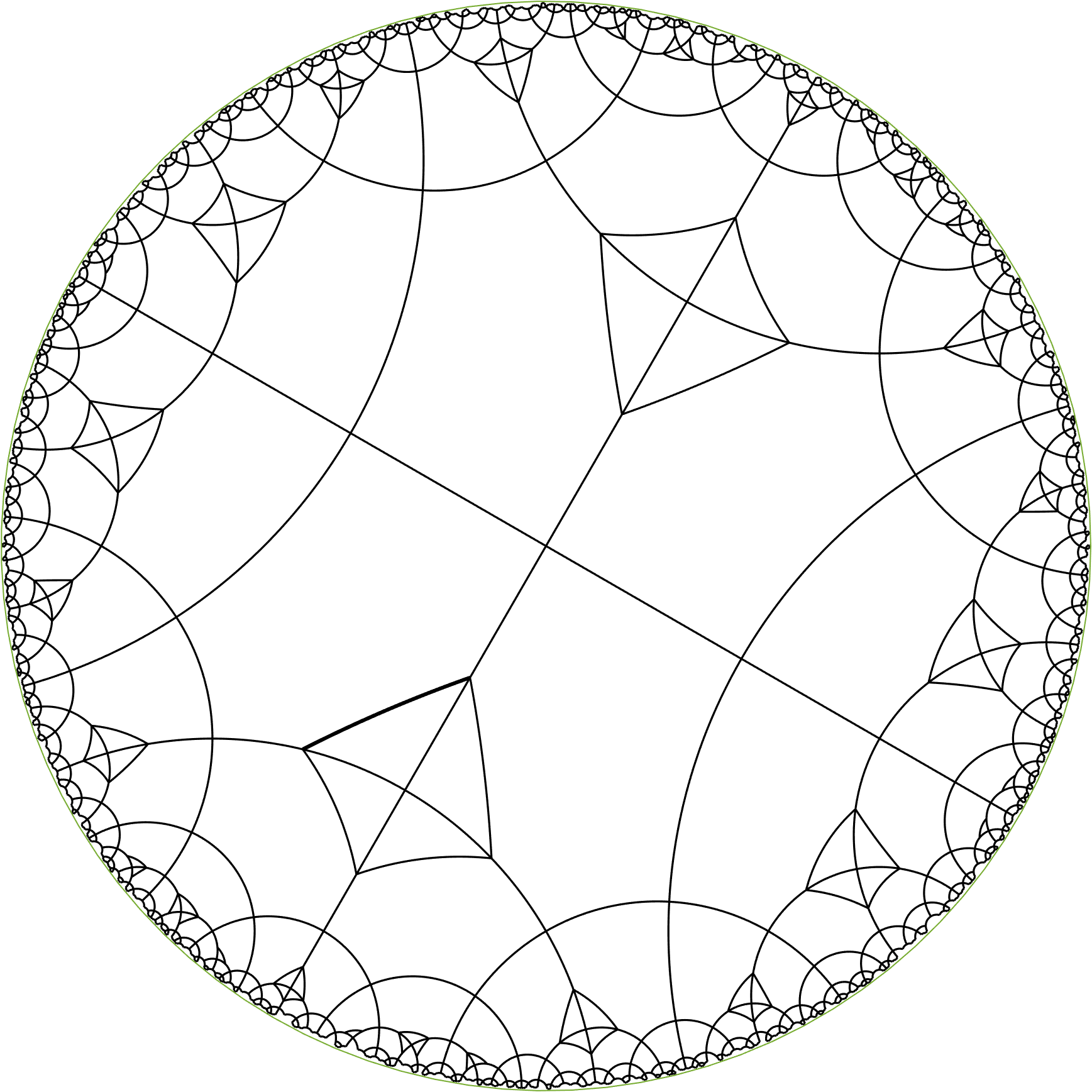}
  \caption{A different way of splitting the fundamental tile in the tiling in figure \ref{subfig:2224_f}.}\label{subfig:2224split2}
  \end{subfigure}
    \hfill
  \begin{subfigure}[t]{0.3\textwidth}
  \centering
    \includegraphics[width=\textwidth]{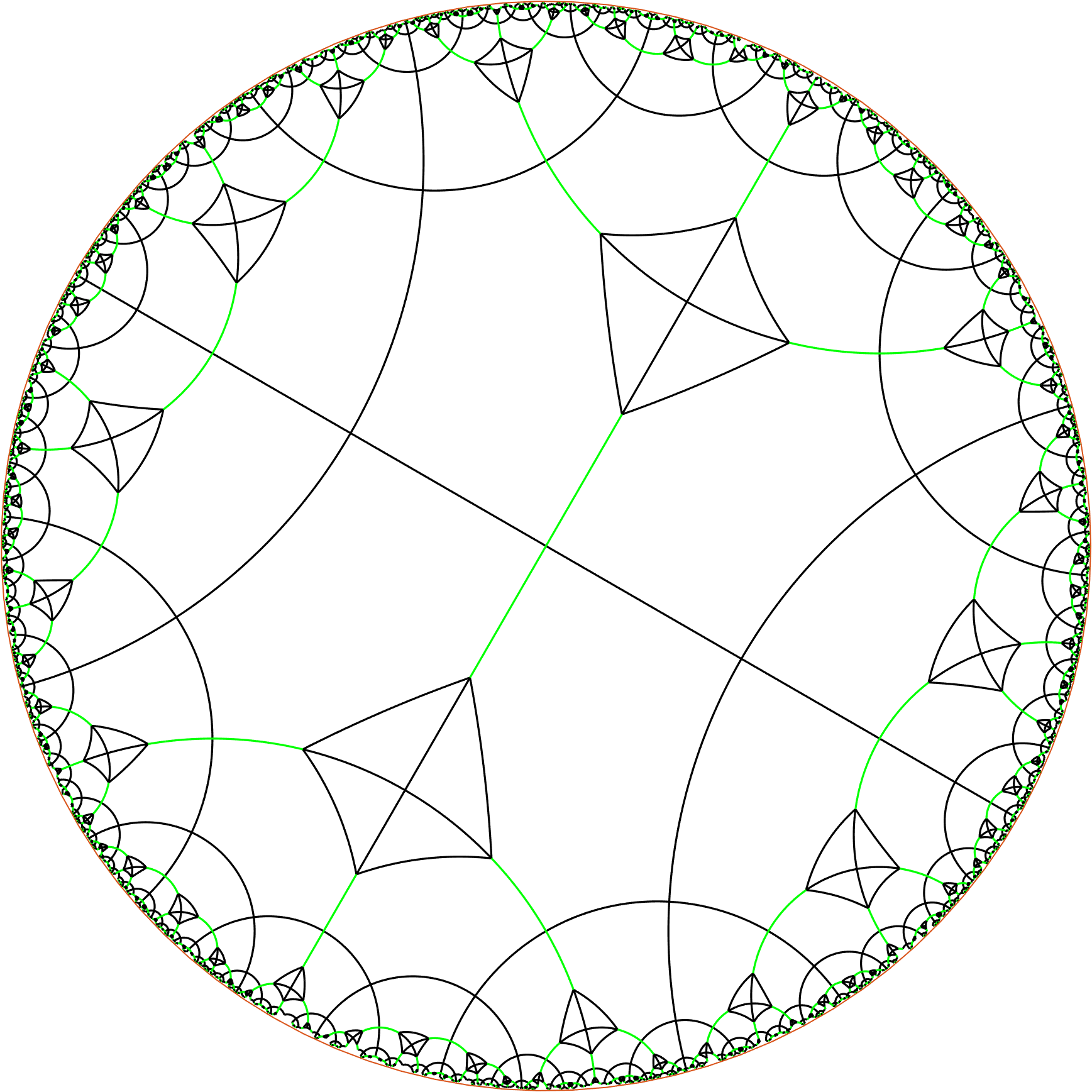}
  \caption{The ribbon tiling resulting from removing the green edges from the tiling in (b), resulting in a ribbon tiling with stabilizer group $\orb{4\bullet}$. }\label{subfig:2224split3}
  \end{subfigure}
  \caption{Tile-$2$-transitive fundamental tilings with symmetry group \orb{2224} and an associated non-fundamental tiling.}\label{fig:2224splits}
\end{figure}

Removing different edges from these split tilings gives the examples in figure \ref{fig:2224tile2trans}.  
The tilings in figures \ref{subfig:2224tile2transa} and \ref{subfig:2224tile2transb} each have one fundamental tile and one non-fundamental tile with infinite stabiliser group, while \ref{subfig:2224tile2transc} has two non-fundamental tiles. 
The ribbon tiles in \ref{subfig:2224tile2transb} and \ref{subfig:2224tile2transc} have stabiliser group \orb{24\infty} and are non-simply connected.   In fact, the fundamental group of this tile is not even finitely generated. 

\begin{figure}[!tbp]
  \begin{subfigure}[t]{0.3\textwidth}
  \centering
    \includegraphics[width=\textwidth]{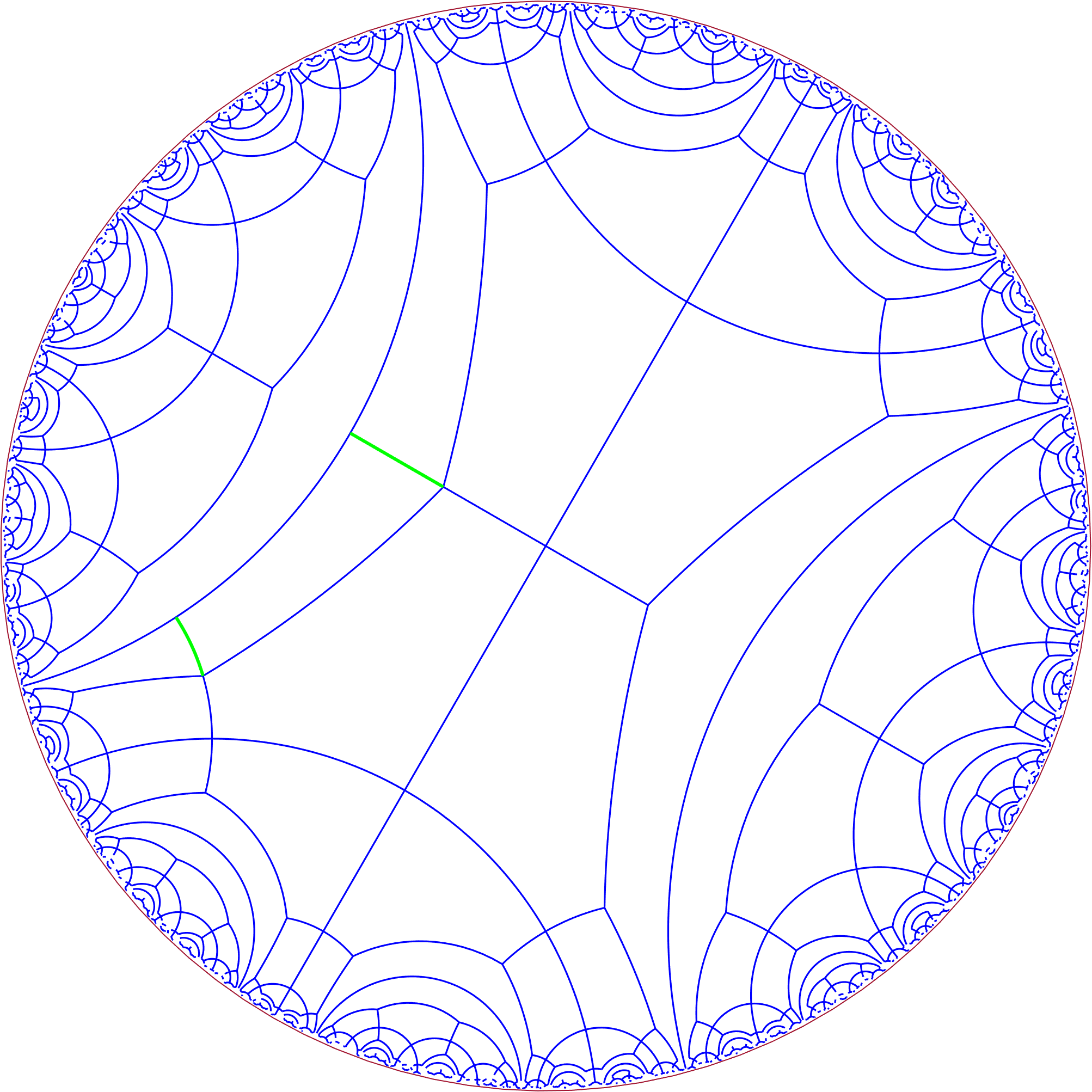}
    \caption{A tile-$2$-transitive ribbon tiling with a non-fundamental tile obtained by deleting the green edges from the split tiling in figure \ref{subfig:2224split1}. The ribbon has stabilizer group \orbcap{\infty\infty}.}\label{subfig:2224tile2transa}
  \end{subfigure}
  \hfill
  \begin{subfigure}[t]{0.3\textwidth}
  \centering
    \includegraphics[width=\textwidth]{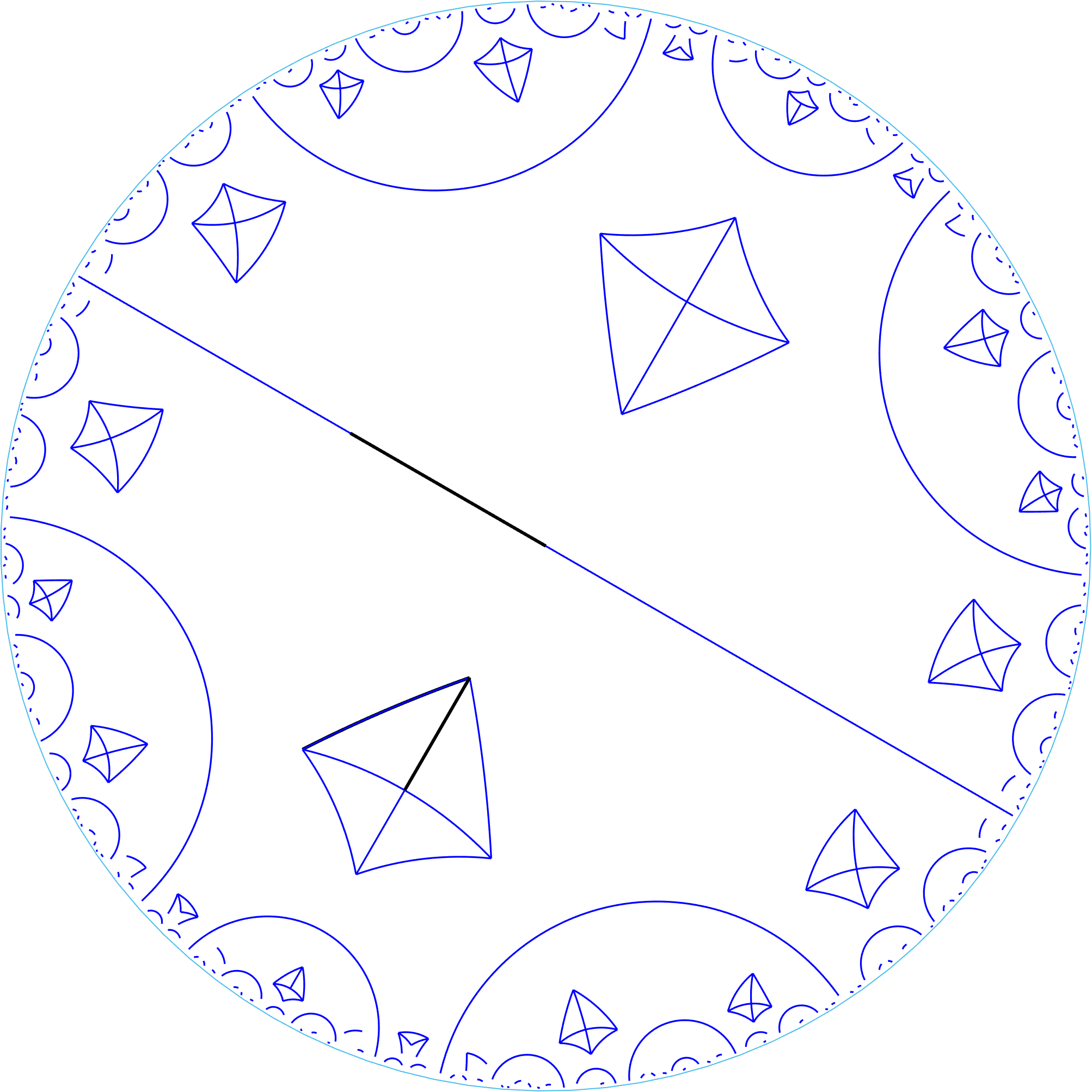}
  \caption{A tile-$2$-transitive ribbon tiling with a non-fundamental tile obtained by deleting an edge from the split tiling in figure \ref{subfig:2224split3}. The ribbon has stabilizer group \orbcap{24\infty}.}\label{subfig:2224tile2transb}
  \end{subfigure}
    \hfill
  \begin{subfigure}[t]{0.3\textwidth}
  \centering
    \includegraphics[width=\textwidth]{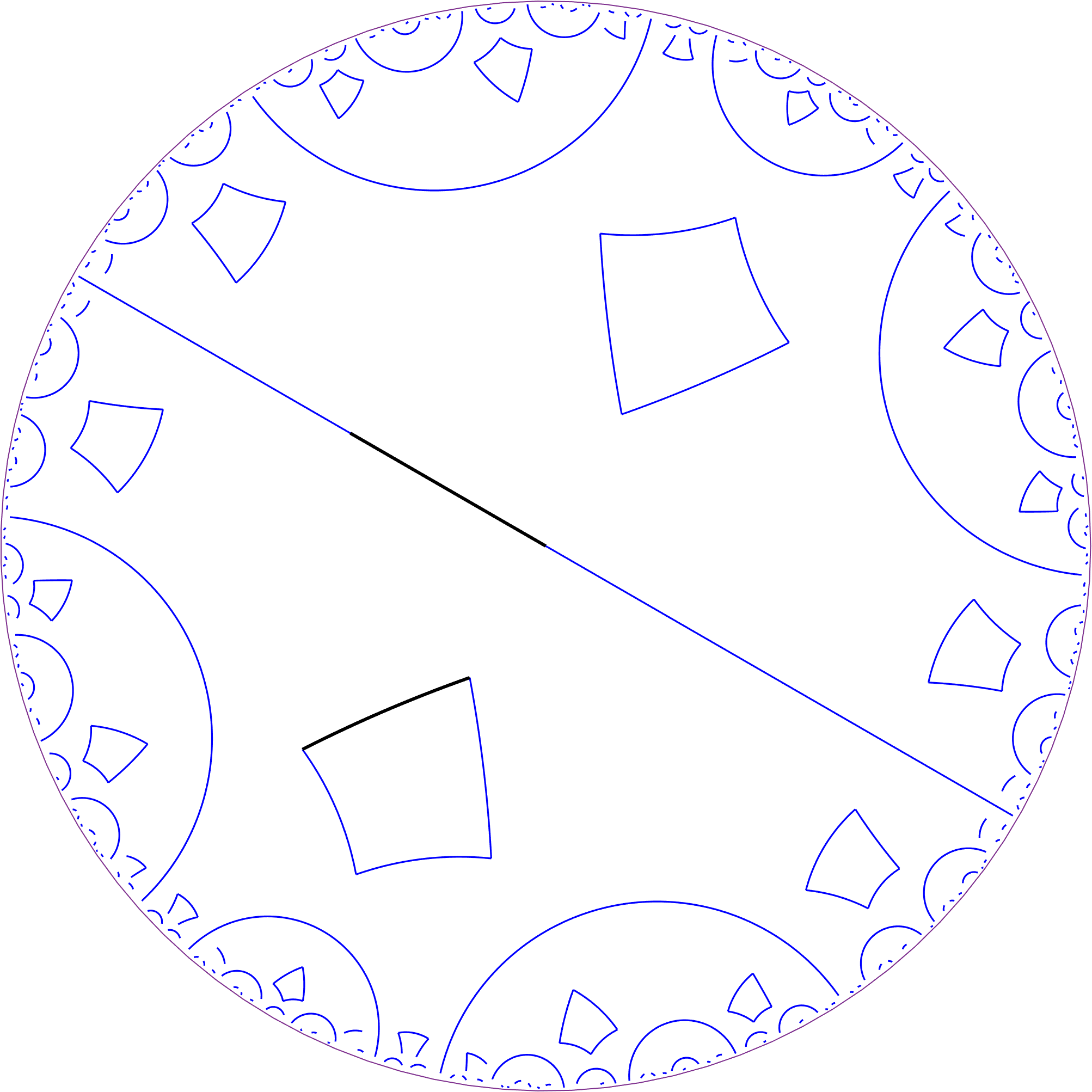}
  \caption{Gluing copies of the remaining fundamental tile in (b) produces a second non-fundamental tile with stabilizer group $\orbcap{4\bullet}$ }\label{subfig:2224tile2transc}
  \end{subfigure}
  \caption{Tile-2-transitive ribbon tilings with symmetry group 2224.}\label{fig:2224tile2trans}
\end{figure}

\section{Summary and Outlook} 

In this paper, we showed how to enumerate and classify periodic, locally finite tilings of the Euclidean or hyperbolic plane $\mathcal{X}$, possibly with unbounded tiles with nontrivial topology. 
The enumeration is based on gluing together fundamental domain tiles to create non-fundamental ones, possibly with stabiliser groups of infinite order.  
The method adapts well to the sequential enumerative aspects of the classical theory estabilished in~\cite{Huson1993} because the new glue operations naturally extend to split fundamental domain tilings.  

The classification builds on the classical D-symbol by introducing coloured edges that cut a ribbon or annulus tile into symmetry-related fundamental disks. 
The coloured D-symbol encoding a ribbon tiling is isomorphic to another coloured D-symbol if and only if the tilings may be deformed into one another while preserving their abstract symmetry. 
When constructing the coloured D-symbol, it is necessary to construct the possible trees on $n$ vertices. 
For distinguishable vertices, it is well-known that there are $n^{n-2}$ spanning trees. 
As illustrated in section \ref{sec:ex}, it is often possible to discard some trees in the construction of the D-symbol. 
However, to prevent a combinatorial explosion in the classification algorithm, future work should focus on \emph{a priori} estimates of the D-symbol in this enumeration and explore more efficient approaches to constructing the coloured D-symbol of a given ribbon tiling. 

Note also that the theory developed here extends to the situation of hyperbolic symmetry groups $\Gamma$, where the quotient space  
$\mathcal{X}/\Gamma$ has finite area but is not compact. 
The situation is exactly the same as before and it makes sense to interpret the punctures that appear in the orbifold as gyration points of infinite order (i.e. parabolic isometries of $\mathcal{X}$). 
For example, it would still be the case in this more general setting that deleting two non-neighbouring edges of a fundamental tile-$1$-transitive tiling would lead to a tile with a translation in its stabilizer subgroup, and consequently a tile with infinite area. 

Finally, we observe that the classification algorithm developed here could also be used to study graph embeddings in compact surfaces when the embedding is not a combinatorial map. 

And returning to the initial inspiration of this work --- the question of how to enumerate stripe patterns on the gyroid --- this can now be achieved by finding (branched) ribbon tilings in the symmetry groups compatible with the covering map that wraps the hyperbolic plane onto this periodic surface.  

\bigskip
\noindent
\textbf{Acknowledgements:} 
The authors would like to thank Myfanwy Evans from the Technical University in Berlin, and Stephen Hyde and Stuart Ramsden from the Australian National University for fruitful discussions and enlivening talks about the problem of classifying ribbon tilings over many years. 

\bibliographystyle{spmpsci}  
\bibliography{bibliography}  

\end{document}